\tikzset{degil/.style={
            decoration={markings,
            mark= at position 0.5 with {
                  \node[transform shape] (tempnode) {$\backslash$};
                  }
              },
              postaction={decorate}
}
}
\title{On Frobenius liftability of surface singularities}
\author{Tatsuro Kawakami}
\address{Department of Mathematics, Graduate School of Science, Kyoto University, Kyoto 606-8502, Japan} 
\email{tatsurokawakami0@gmail.com}
\author{Teppei Takamatsu}
\address{Department of Mathematics (Hakubi Center), Graduate School of Science, Kyoto University, Kyoto 606-8502, Japan}
\email{teppeitakamatsu.math@gmail.com}
\def\phi{\varphi}
\def\epsilon{\varepsilon}
\def\tilde{\widetilde}
\def\mapsto{\longmapsto}
\def\log{\operatorname{log}}
\def\Hom{\operatorname{Hom}}
\def\Spec{\operatorname{Spec}}
\def\Supp{\operatorname{Supp}}
\def\Exc{\operatorname{Exc}}
\DeclareMathOperator\Tor{\mathrm{Tor}}
\newcommand{\Q}{\mathbb{Q}}
\newcommand{\Z}{\mathbb{Z}}
\newcommand{\PP}{\mathbb{P}}
\newcommand{\sO}{\mathcal{O}}
\newcommand{\sHom}{\mathop{\mathcal{H}\! \mathit{om}}}
\newcommand{\pushoutcorner}[1][ul]{\save*!/#1-1.8pc/#1:(-1,1)@^{|-}\restore}
\theoremstyle{plain}
\newtheorem{thm}{Theorem}[section] 
\newtheorem{cor}[thm]{Corollary}
\newtheorem{prop}[thm]{Proposition}
\newtheorem{conj}[thm]{Conjecture}
\newtheorem{lem}[thm]{Lemma}
\theoremstyle{definition} 
\newtheorem{defn}[thm]{Definition}
\newtheorem{conv}[thm]{Convention}
\theoremstyle{remark}
\newtheorem{rem}[thm]{Remark}
\newtheorem{defn and notation}[thm]{Definition and Notation}
\newtheorem{cln}{Claim}
\theoremstyle{plain}
\newtheorem{theo}{Theorem}
\keywords{Frobenius liftability; Frobenius splitting; Extension theorem for differential forms; Bogomolov-Sommese vanishing}
\subjclass[2020]{13A35,14F10,14B05}
\begin{document}
\tolerance = 9999

\maketitle
\markboth{Tatsuro Kawakami and Teppei Takamatsu}{Frobenius liftability of surface singularities}

\begin{abstract}
We show that a plt surface singularity $(P\in X,B)$ is $F$-liftable if and only if it is $F$-pure and is not a 
rational double point of type $E_8^1$ in characteristic $p=5$.
As a consequence, we prove the logarithmic extension theorem for $F$-pure surface pairs and Bogomolov-Sommese vanishing for globally $F$-split surface pairs. These results were previously known to hold in characteristic $p>5$.
\end{abstract}

\tableofcontents

\section{Introduction}
We work over an algebraically closed field $k$ of characteristic $p>0$ unless stated otherwise.
A pair $(X,B)$ of a normal variety $X$ and a reduced divisor $B$ is said to be \textit{$F$-liftable} if there exists a lifting $(\tilde{X},\tilde{B})$ of $(X,B)$ over the ring $W_2(k)$ of Witt vectors of length two, and a lifting $\tilde{F}_X$ of Frobenius to $\tilde{X}$ that is compatible with $\tilde{B}$ (see Definition \ref{def:F-lift} for details).

Global $F$-liftability has been investigated by many authors and is known to have very good properties (see \cite{BTLM, AWZ, AWZ2} for example). 
Local $F$-liftability has been studied by Zdanowicz \cite{Zda18}; he established a criterion of $F$-liftability of hypersurfaces, and clarified the relationship between $F$-liftability and other classical $F$-singularities, such as $F$-pure and strongly $F$-regular singularities.
Recently, Langer \cite{Langer23, Langer24} proved Bogomolov’s inequality and Simpson’s correspondence for (globally) $W_2(k)$-liftable varieties with singularities related to $F$-liftability.
In addition, the first author \cite{Kaw4} proved the following extendability of differential forms for locally $F$-liftable pairs.

\begin{thm}[\textup{\cite[Theorem 4.2]{Kaw4}}]\label{thm:ext thm for F-lift}\label{thm:F-lift}
Let $X$ be a normal variety and $B$ a reduced divisor on $X$ such that $(X, B)$ is locally $F$-liftable.
Let $D$ be a $\Q$-Cartier $\Z$-divisor on $X$.
Then, for any proper birational morphism $f\colon Y\to X$ from a normal variety $Y$,
the restriction map
\[
f_{*}\Omega^{[i]}_Y(\log f_{*}^{-1}B+E)(f^{*}D)\hookrightarrow\Omega_X^{[i]}(\log B)(D)
\]
is an isomorphism for all $i\geq 0$, where $E$ is the reduced $f$-exceptional divisor.
We refer to Subsection \ref{subsection:Notation and terminology} for detailed notation.
\end{thm}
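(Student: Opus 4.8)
The plan is to reduce the statement to a single pole-order estimate along each $f$-exceptional prime divisor and then to close that estimate by a Frobenius-descent argument powered by the lifting. First I would observe that the assertion is local on $X$, so I may assume that $X$ is affine and carries a lift $(\tilde X,\tilde B)$ over $W_2(k)$ with a compatible Frobenius lift $\tilde F$. Both sheaves in question are reflexive and the restriction map is injective, since the two sheaves agree over the open locus where $f$ is an isomorphism and the target is reflexive; hence it suffices to prove surjectivity. Because $\Omega^{[i]}_Y(\log(f_*^{-1}B+E))(f^*D)$ is reflexive, surjectivity may be checked in codimension one on $Y$, i.e.\ at the generic point of each exceptional prime $E_j$. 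Thus the goal becomes: the reflexive pullback of any section of $\Omega^{[i]}_X(\log B)(D)$ has at most a logarithmic pole along $E_j$. Note that $E_j$ is $f$-exceptional, so such a section is regular at the center of $E_j$ on $X$ and its pole along $E_j$ is created only after pulling back; in particular, coherence of $\Omega^{[i]}_X(\log B)(D)$ and finiteness of $f$ guarantee that these pole orders are uniformly bounded.

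Next I would assemble the Frobenius--Cartier input. Writing $F=F_X$ for the (absolute) Frobenius, the lift $\tilde F$ yields, through the logarithmic Deligne--Illusie construction, a splitting of the log Cartier operator and hence exhibits $\Omega^{[i]}_X(\log B)(D)$ as an $\mathcal O_X$-module direct summand of $F_*\big(\Omega^{[i]}_X(\log B)(pD)\big)$ (extending from the smooth locus by reflexivity), with inclusion the inverse Cartier map $da\mapsto a^{p-1}\,da$ and retraction the Cartier operator; compatibility of $\tilde F$ with $\tilde B$ is what keeps the logarithmic structure intact. The two facts I would then exploit are that the absolute Frobenius is natural, so $F\circ f=f\circ F$ and the log Cartier operator $C_Y$ on the smooth model $Y$ is compatible with the one on $X$ at the generic point of $E_j$; and that the divisorial valuation $v_j=v_{E_j}$ satisfies $v_j\circ F^*=p\,v_j$, so that passing through Frobenius scales orders by the factor $p$.

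The heart of the argument is a self-referential estimate. Let $m_j$ be the largest excess pole order along $E_j$, beyond the logarithmic threshold $\operatorname{mult}_{E_j}(f^*D)+1$ built into the target sheaf, among pullbacks of sections of $\Omega^{[i]}_X(\log B)(D)$; by the first paragraph $m_j<\infty$, and I must show $m_j\le 0$. The summand relation identifies $\Omega^{[i]}_X(\log B)(D)$ with a direct factor of $F_*\big(\Omega^{[i]}_X(\log B)(pD)\big)$, and crucially the pullback line bundle $\mathcal O_X((p-1)D)$ shifts the threshold by exactly $(p-1)\operatorname{mult}_{E_j}(f^*D)$, so the excess for the $pD$-twist is again $m_j$. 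Applying $C_Y$ and its sharp local estimate $v_j(C_Y\eta)\ge \lceil (v_j(\eta)-(p-1))/p\rceil$ then gives
\begin{equation*}
\operatorname{mult}_{E_j}(f^*D)+1+m_j\ \le\ \tfrac1p\Big(p\,\operatorname{mult}_{E_j}(f^*D)+1+m_j+(p-1)\Big),
\end{equation*}
in which the left side is the pole order of the extended form and the right side comes from applying $C_Y$ to the $pD$-twisted summand; this simplifies to $m_j\le m_j/p$, forcing $m_j\le 0$. I expect the principal difficulty to lie precisely here: one must verify the sharp additive constant $(p-1)$ in the Cartier estimate and check that it cancels exactly against the logarithmic threshold and the twist so that the descent collapses to $m_j\le 0$, while tracking the discrepancy of $E_j$ and the $\Q$-Cartier divisor $D$ uniformly and in every characteristic for which the lift exists.

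Finally, having the logarithmic pole bound at the generic point of every exceptional prime $E_j$, reflexivity of $\Omega^{[i]}_Y(\log(f_*^{-1}B+E))(f^*D)$ promotes this codimension-one conclusion to a genuine section on all of $Y$. Hence the restriction map is surjective, and being also injective it is an isomorphism for every $i\ge 0$.
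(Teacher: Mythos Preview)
This theorem is not proved in the paper under review: it is quoted verbatim from \cite[Theorem~4.2]{Kaw4} and used as a black box. There is therefore no in-paper argument to compare against. One can only compare with the strategy of that reference, which does proceed by combining the splitting of the (reflexive) Cartier operator supplied by the $F$-lift with a pole-order descent, so your outline is in the right family.

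Your reductions (localise on $X$, use reflexivity to test surjectivity at the generic point of each exceptional prime $E_j$, and note that the excess pole order $m_j$ is finite by coherence) are correct. The substantive gap is in the core estimate. You assert that ``the excess for the $pD$-twist is again $m_j$'', but this is not justified: the splitting section $\phi\colon \Omega^{[i]}_X(\log B)\to Z^{[i]}_X(\log B)$ is \emph{not} the inverse Cartier map; it is only a lift of it, depending on the chosen $\tilde F$, so one cannot read the pole order of $\phi(\omega)$ directly off from that of $\omega$. What the Cartier estimate actually controls is the other direction, the pole of $C_Y(\eta)$ in terms of that of $\eta$. This gives an inequality of the shape $m_j(D)\le\bigl\lfloor (m_j(pD)+\varepsilon)/p\bigr\rfloor$, with $m_j(pD)$---not $m_j(D)$---on the right. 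To close the loop one then needs a uniform a~priori bound on $m_j(p^eD)$ for all $e\ge 0$; this follows because, if $n$ is the Cartier index of $D$, then $p^eD$ differs from one of the finitely many divisors $rD$ with $0\le r<n$ by a Cartier divisor, and twisting by a Cartier divisor leaves the excess unchanged. Once that bound is in hand, iterating and letting $e\to\infty$ forces $m_j(D)\le 0$. You should also make explicit the distinction between the log and non-log Cartier normalisations (your additive constant $p-1$ belongs to the non-log version, whereas the relevant operator here is the log Cartier along $E_j$), and record that the compatibility of Cartier with pullback is $C_Y\circ f^{*}=f^{(1),*}\circ C_X$ through the Frobenius twist; this is harmless for valuations but should be stated rather than elided.
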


\subsection{$F$-liftability for surface singularities}
First, we investigate $F$-liftability of a normal surface singularity $(P\in X,B)$ such that $B$ is reduced.
If $(P\in X,B)$ is $F$-liftable, then it is $F$-pure (Corollary \ref{cor:F-lift to F-pure}), and $F$-pure surface singularities are classified in \cite{Mehta-Srinivas(surface)}, \cite{Hara(two-dim)}, and \cite{Hara-Watanabe}.
Therefore, it suffices to investigate which $F$-pure surface singularities are $F$-liftable.
In fact, we show that, for plt surface singularities, $F$-liftability coincides with $F$-purity except for a rational double point (RDP, for short) of type $E_8^1$ in $p=5$.

\begin{theo}\label{Introthm:F-purity and F-liftability}
    Let $k$ be an algebraically closed field of characteristic $p>0$.
    Let $(P\in X,B)$ be a plt surface singularity over $k$ such that $B$ is reduced.
    Then $(P\in X,B)$ is $F$-liftable if and only if it is $F$-pure and the following does not hold:
    \[
    p=5, B=0,\text{and } (P\in X) \text{\,\,is an RDP of type $E_8^1$}.
    \]
\end{theo}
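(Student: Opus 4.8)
The implication that $F$-liftability forces $F$-purity is already Corollary \ref{cor:F-lift to F-pure}, so the content to be proved is twofold: that the excluded singularity (the RDP of type $E_8^1$ with $p=5$ and $B=0$) is $F$-pure yet \emph{not} $F$-liftable, and that every other $F$-pure plt surface singularity \emph{is} $F$-liftable. Since the $F$-pure surface singularities are completely classified in \cite{Mehta-Srinivas(surface), Hara(two-dim), Hara-Watanabe}, the argument organizes itself as a case analysis along this classification, split first according to whether $B=0$.

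For the converse (liftability) direction, the plan is to isolate the genuinely hard cases. When $B\neq 0$, a plt pair $(P\in X,B)$ with $B$ reduced is, formally, a cyclic quotient (toric) pair with $B$ the image of a coordinate axis; such toric pairs carry the canonical Frobenius lift of the ambient toric variety, which is automatically compatible with the toric boundary, so they are $F$-liftable. When $B=0$ the singularity is klt. For those that are non-Gorenstein (hence tame quotient singularities $X\cong \mathbb{A}^2/G$ with $p\nmid |G|$), I would lift $G$ to $\mathrm{GL}_2(W_2(k))$ by Teichmüller and descend the standard Frobenius lift $x_i\mapsto x_i^p$ of $\mathbb{A}^2$ along the quotient map; tameness makes this lift $G$-equivariant, following the descent principle of \cite{AWZ}, and one obtains an $F$-lift of $X$. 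The remaining cases are the $F$-pure rational double points in small characteristic, which I treat uniformly as hypersurfaces $\{f=0\}\subset\mathbb{A}^3$: for each such RDP other than $E_8^1$ in $p=5$, I would exhibit an explicit $W_2(k)$-lift $\tilde f$ of $f$ together with a Frobenius lift, verifying that the obstruction in Zdanowicz's hypersurface criterion \cite{Zda18} vanishes.

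For the excluded case I would show directly that the $E_8^1$ singularity with $p=5$, although $F$-pure by the classification, admits no Frobenius lift. The key point is that this non-liftability is strictly finer than the extendability statement of Theorem \ref{thm:F-lift}: since the logarithmic extension theorem in fact holds here, the obstruction cannot be read off from differential forms and must instead be computed as the genuine obstruction to lifting Frobenius. Concretely, writing the singularity as a hypersurface, I would fix a $W_2(k)$-lift, form the associated obstruction class to lifting the Frobenius in the relevant $\mathrm{Ext}$-group following \cite{Zda18}, and prove it is nonzero for \emph{every} choice of lift. This is the characteristic-$5$ anomaly that singles out $E_8$, parallel to the fact that Bogomolov–Sommese vanishing and the extension theorem were previously accessible only for $p>5$.

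The main obstacle is precisely the wild rational double point analysis, where neither toric methods nor tame descent apply and the Frobenius-lifting obstruction must be evaluated by hand; in particular, pinning down $E_8^1$ with $p=5$ as the \emph{unique} non-liftable $F$-pure plt surface singularity requires separating the distinct Artin forms of $E_8$ in characteristic $5$ and checking that the obstruction vanishes for all of them except this one.
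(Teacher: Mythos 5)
Your overall skeleton---$F$-liftable $\Rightarrow$ $F$-pure via Corollary \ref{cor:F-lift to F-pure}, toric liftability when $B\neq 0$ (this is the paper's Lemma \ref{lem:cyc quotient are toric}, via \cite{Lee-Nakayama}), Zdanowicz's criterion for the rational double points, and a lift-independent obstruction computation for $E_8^1$ in $p=5$---agrees with the paper's, but your step for klt singularities with $B=0$ contains two genuine errors. First, the classification claim is wrong: by \cite[Theorem 1.2]{Hara(two-dim)}, the $F$-pure non-canonical klt surface singularities are exactly the strongly $F$-regular ones, and these are quotients by finite \emph{linearly reductive group schemes}, not necessarily by \'etale groups of order prime to $p$. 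For instance, the cyclic quotient singularity $\frac{1}{p}(1,1)$ (a $\mu_p$-quotient, $p\geq 3$) is $F$-regular, klt and non-canonical, but its class group is $\Z/p\Z$, so it is not a tame quotient. Second, and more seriously, your mechanism fails even for honest tame quotients: after lifting $G\subset \mathrm{GL}_2(k)$ to $\mathrm{GL}_2(W_2(k))$ (possible because $|G|$ is prime to $p$, though not via entrywise Teichm\"uller lifts, which do not respect matrix multiplication), the coordinatewise Frobenius lift $x_i\mapsto x_i^p$ is \emph{not} $G$-equivariant---not even up to the $p$-power twist of $G$---unless $G$ acts by monomial matrices. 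Indeed, for $g\colon x\mapsto ax+by$ one has $(ax+by)^p-(a^px^p+b^py^p)=p\sum_{0<i<p}\frac{1}{p}\binom{p}{i}a^ib^{p-i}x^iy^{p-i}\not\equiv 0 \pmod{p^2}$, so $\tilde{F}$ does not preserve the lifted invariant ring; since the binary tetrahedral, octahedral and icosahedral groups admit no monomial model, for all klt singularities of type $T$, $O$, $I$ your descent never gets started. This failure is precisely why the paper develops the Cartier-operator characterization of $F$-liftability (Theorem \ref{thm:characterize F-lift via Cartier operators}) and deduces descent along \emph{any} finite prime-to-$p$ cover, with no equivariance or liftability hypotheses (Theorem \ref{thm:finite descent}, Corollary \ref{cor:finite descent}); in the paper, this case is then dispatched by Hara's equivalence together with \cite[Theorem 2.12 (1)]{Kaw4}.

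There is also a gap in your RDP step: ``exhibit an explicit lift for each $F$-pure RDP'' does not terminate, because in $p=2$ the $F$-pure but non-$F$-regular RDPs form two \emph{infinite} families $D_{2n}^{n-1}$ and $D_{2n+1}^{n-1}$, and a Zdanowicz-type verification (by hand or by computer) handles each fixed $n$ but not all $n$ simultaneously. The paper's key technical device here is Lemma \ref{lem:F-lift criterion}: for a Frobenius splitting $\sigma$ compatible with $(f)$, the auxiliary element $g$ in Theorem \ref{thm:Zdanowicz Critierion} can always be taken to be $-\sigma(F_{*}\Delta_1(f))$, which turns the criterion into a single closed-form identity verified uniformly in $n$ in the proof of Proposition \ref{prop:canonical}. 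You need such a uniform argument (or some other treatment of these families) for your case analysis to be complete. For the excluded case your plan is fine in principle: Zdanowicz's criterion is already independent of the choice of lift, so the non-liftability of $E_8^1$ in $p=5$ is a single ideal-membership computation rather than an analysis of all possible lifts.
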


It is natural to ask whether Theorem \ref{Introthm:F-purity and F-liftability} is still true without the plt assumption.
The difficulty lies in the $F$-liftability of a cusp singularity, which is a singularity whose exceptional divisor of the minimal resolution consists of a node or a cycle of rational curves.
In fact, if we assume that a cusp singularity is $F$-liftable (Conjecture \ref{conj}), then 
Theorem \ref{Introthm:F-purity and F-liftability} holds for all surface singularities (Proposition \ref{prop:F-pure to F-lift(non-plt)}).
A cusp singularity is $F$-pure \cite[Theorem 1.2]{Mehta-Srinivas(surface)}, and the authors expect that it is $F$-liftable. The $F$-liftability of some hypersurface cusp singularities in characteristic two will be confirmed in Section \ref{subsec:cusp}.

To deduce $F$-liftability of some of lc surface singularities, we use descent properties of $F$-liftability for finite covers.
To show this, we characterize $F$-liftability via a splitting of a reflexive version of the Cartier operator:

\textit{Let $(X,B)$ be a pair of a normal variety and a reduced divisor. Then $(X,B)$ is $F$-liftable if and only if the reflexive Cartier operator $C_{X,B}^{[1]}\colon Z_X^{[1]}(\log\,B)\to \Omega_X^{[1]}(\log\,B)$ splits.}

This characterization provides some new descent properties of $F$-liftability.
We refer to Section \ref{sec:F-lift and Car operator} for details.

\subsection{Applications of Theorem \ref{Introthm:F-purity and F-liftability}}
\subsubsection{Extendability of differential forms for $F$-pure surface pairs}

As an application of Theorem \ref{Introthm:F-purity and F-liftability}, we obtain the following theorem.

\begin{theo}\label{Introthm:LET}
Let $(X,B)$ be an $F$-pure surface pair over a perfect field of positive characteristic.
Let $D$ be a $\Z$-divisor on $X$.
Then, for any proper birational morphism $f\colon Y\to X$ from a normal variety $Y$, the restriction map
\[
f_{*}\Omega^{[i]}_Y(\log\,f^{-1}_{*}\lfloor B\rfloor+E)(f^{*}D)\hookrightarrow \Omega^{[i]}_X(\log\,\lfloor B\rfloor)(D)
\]
is an isomorphism for all $i\geq 0$, where $E$ is the reduced $f$-exceptional divisor.
\end{theo}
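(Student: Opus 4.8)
The plan is to reduce the global statement of Theorem \ref{Introthm:LET} to the local extension theorem for $F$-liftable pairs (Theorem \ref{thm:ext thm for F-lift}). Since the assertion that the restriction map $f_{*}\Omega^{[i]}_Y(\log f^{-1}_{*}\lfloor B\rfloor+E)(f^{*}D)\hookrightarrow\Omega^{[i]}_X(\log\lfloor B\rfloor)(D)$ is an isomorphism can be checked on an affine open cover and is local on $X$, I would first observe that it suffices to prove the statement in an \'etale (or completion) neighborhood of each closed point $P\in X$. Thus the problem becomes: show that the local extension property holds at every $F$-pure surface singularity $(P\in X,\lfloor B\rfloor)$. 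Away from the singular locus of $X$ and the non-snc locus of $\lfloor B\rfloor$ the map is trivially an isomorphism (both sheaves agree with the usual differentials on the smooth locus and $\Omega^{[i]}$ is reflexive), so the content is concentrated at finitely many closed points.

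The main step is a case analysis based on the classification of $F$-pure surface singularities. First I would reduce to plt singularities: an $F$-pure surface singularity is either plt or log canonical but not plt, and in the latter case the minimal resolution has exceptional divisor consisting of a cycle or node of rational curves (a cusp) or is a simple elliptic type that is excluded by $F$-purity. For the plt case, Theorem \ref{Introthm:F-purity and F-liftability} tells us that $(P\in X,\lfloor B\rfloor)$ is $F$-liftable \emph{except} when $p=5$, $\lfloor B\rfloor=0$, and $(P\in X)$ is an RDP of type $E_8^1$. Whenever the singularity is $F$-liftable, Theorem \ref{thm:ext thm for F-lift} applies directly and gives the desired isomorphism for every $i\geq 0$ and every $\Q$-Cartier $\Z$-divisor $D$ (here $\lfloor B\rfloor$ is $\Q$-Cartier as $X$ is a surface with klt-type, hence $\Q$-factorial, singularities). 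So the plan hinges on separately handling the finitely many exceptional types not covered by $F$-liftability.

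The hard part will be the two families left uncovered by the $F$-liftability dichotomy: the $E_8^1$ RDP in $p=5$, and the non-plt $F$-pure singularities (cusps). For the $E_8^1$ case I would verify the extension theorem by hand, exploiting that an RDP is a canonical singularity (so $\Omega^{[2]}_X\cong\sO_X$ and the $i=2$ case is the statement that $f$ is crepant) and that the minimal resolution is well understood; the only genuinely new computation is the $i=1$ extension of log differentials across the $E_8^1$ configuration in $p=5$, which I expect to check either via an explicit local equation $z^2=x^3+\dots$ and its resolution, or by comparing with the characteristic-zero/$p>5$ statement and controlling the finitely many terms that could fail. For the cusp singularities I would use that they are $F$-pure \cite[Theorem 1.2]{Mehta-Srinivas(surface)} and either invoke the conjectural $F$-liftability (Conjecture \ref{conj}) where available, or give a direct argument: a toric/toroidal model of the cusp lets one compute $f_{*}\Omega^{[i]}_Y(\log(\cdots))$ combinatorially and match it with the reflexive log differentials on $X$, the key point being that the log structure along the exceptional cycle absorbs the would-be obstructions. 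Assembling these local verifications with the reduction in the first paragraph then yields the global isomorphism in all degrees $i$.
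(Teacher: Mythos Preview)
Your overall strategy---localize, then apply Theorem \ref{Introthm:F-purity and F-liftability} together with Theorem \ref{thm:ext thm for F-lift}---matches the paper's, but there is a genuine gap in how you handle the non-plt locus. Your claim that simple elliptic singularities are ``excluded by $F$-purity'' is false: a simple elliptic singularity is $F$-pure exactly when the exceptional elliptic curve is ordinary \cite[Theorem 1.1]{Mehta-Srinivas(surface)}. Your case list also omits the non-plt cyclic quotients, dihedral quotients, and the rational lc singularities of types $(2,2,2,2)$, $(3,3,3)$, $(2,4,4)$, $(2,3,6)$ of \cite[3.39, 3.40]{Kol13}, several of which are $F$-pure in any given characteristic. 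For cusps you fall back on Conjecture \ref{conj}, which is open, or on an unspecified toroidal argument. The paper sidesteps the entire non-plt analysis by first passing to a dlt blow-up $h\colon(W,B_W=h_*^{-1}B+\Exc(h))\to(X,B)$; on $W$ the pair is dlt, hence plt at every closed point and $\Q$-factorial, so Theorem \ref{Introthm:F-purity and F-liftability} applies everywhere on $W$. The nontrivial content of this reduction is showing that the Cartier index of $K_W+B_W$ is prime to $p$---this is where $F$-purity enters, via \cite{Hara(two-dim)} and \cite{Hara-Watanabe} applied case-by-case along Graf's list \cite[7.B]{Gra}---after which the claim in \cite[Proof of Proposition 4.8]{Kaw3} transfers the extension statement from $(W,B_W)$ back down to $(X,B)$. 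This dlt reduction is the missing idea in your plan, and without it you are forced to prove the extension theorem (with arbitrary $D$) at singularities that are not even known to be $F$-liftable.

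Second, for the $E_8^1$ RDP in $p=5$ your proposed direct computation does not address the divisor $D$, and Theorem \ref{thm:ext thm for F-lift} gives no leverage at a non-$F$-liftable point. The paper's observation is that the local class group of an $E_8$ RDP is trivial \cite[Section 24]{Lipman} (cf.\ \cite[Table 2]{LMM2}), so after shrinking $X$ one may assume $D=0$; the extension statement with $D=0$ at this singularity is then exactly Hirokado's result \cite[Theorem 1.1 (ii)]{Hirokado}. Without this trick you would have to verify extendability for an arbitrary $\Z$-divisor by hand, which is precisely the difficulty flagged in Remark \ref{rem:intro}.
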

\begin{rem}\label{rem:intro}
The crucial statement is the case $D\neq 0$.
In fact, if $D=0$, then the above theorem is obtained by combining Graf's and Hirokado's \cite{Gra, Hirokado} results, or by \cite{Kaw6}.
However, deducing the case $D\neq 0$ from the case $D=0$ is not straightforward, even when $D$ is $\Q$-Cartier.
This is because the index one cover of $D$ can be inseparable in positive characteristic (see \cite[Remark 4.10]{Kaw3} for more details).
The case $D\neq 0$ is essential for the proof of Bogomolov-Sommese vanishing for globally $F$-split surface pairs (Theorem \ref{Introthm:BSV}).
\end{rem}
\begin{rem}
    Extendability, as stated in Theorem \ref{Introthm:LET}, holds for an lc surface pair over a perfect field of characteristic $p>5$. However, this fails when $p\leq 5$ (see \cite[Theorem 1.2]{Gra} and \cite[Proposition 4.8]{Kaw3}). 
\end{rem}

Using a technique of Graf \cite{Gra}, Theorem \ref{Introthm:LET} can be reduced to a dlt blow-up of $(X,B)$.
Then, except for the case where $p=5$ and the dlt blow-up has an RDP of type $E_8^1$, the extendability in the theorem is an immediate consequence of Theorem \ref{Introthm:F-purity and F-liftability} and Theorem \ref{thm:F-lift}.
In the remaining case, we obtain the desired extendability from the fact that the class group of the singularity is trivial.

\subsubsection{Bogomolov-Sommese vanishing for globally $F$-split surface pairs}

As another application of Theorem \ref{Introthm:F-purity and F-liftability}, we obtain the following theorem.

\begin{theo}\label{Introthm:BSV}
Let $(X,B)$ be a globally $F$-split projective surface pair over a perfect field of positive characteristic. 
Then 
\[
H^0(X, \Omega_X^{[i]}(\log\,\lfloor B\rfloor)(-D))=0
\]
for every $\Z$-divisor $D$ on $X$ satisfying $\kappa(X, D)>i$, where $\kappa(X,D)$ the Iitaka dimension of a $\Z$-divisor $D$ (see \cite[Definition 2.18]{GKKP}).
\end{theo}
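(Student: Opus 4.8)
The plan is to rule out a nonzero section by reducing, via the extension theorem, to the classical (smooth) Bogomolov--Sommese inequality on a log resolution. Since $\dim X=2$, we have $\Omega^{[i]}_X=0$ for $i>2$, so only $i\in\{0,1,2\}$ occur. For $i=2$ the statement is vacuous because $\kappa(X,D)\le\dim X=2$ can never exceed $2$; for $i=0$ a nonzero element of $H^0(X,\mathcal{O}_X(-D))$ exhibits $-D$ as (linearly equivalent to) an effective divisor, forcing $\kappa(X,D)\le 0$ and contradicting $\kappa(X,D)>0$. Thus it suffices to treat $i=1$: assuming $0\neq s\in H^0(X,\Omega^{[1]}_X(\log\lfloor B\rfloor)(-D))$ with $\kappa(X,D)=2$ (that is, $D$ big), I will derive a contradiction.

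First I would pass to a log resolution $f\colon Y\to X$ of $(X,\lfloor B\rfloor)$ and set $\Delta_Y:=f^{-1}_{*}\lfloor B\rfloor+E$, where $E$ is the reduced exceptional divisor, chosen so that $\Delta_Y$ has simple normal crossings. A globally $F$-split pair is in particular $F$-pure, so Theorem \ref{Introthm:LET} (applied with $-D$ in place of $D$) gives an isomorphism
\[
f_{*}\Omega^{[1]}_Y(\log\Delta_Y)(-f^{*}D)\xrightarrow{\ \sim\ }\Omega^{[1]}_X(\log\lfloor B\rfloor)(-D).
\]
Taking global sections, $s$ corresponds to a nonzero $s'\in H^0(Y,\Omega^1_Y(\log\Delta_Y)(-f^{*}D))$, i.e. to a nonzero sheaf map $\mathcal{O}_Y(f^{*}D)\to\Omega^1_Y(\log\Delta_Y)$. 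Saturating its image, I obtain a saturated line-bundle subsheaf $\mathcal{O}_Y(M)\subseteq\Omega^1_Y(\log\Delta_Y)$ with $M-f^{*}D$ effective, so $\kappa(Y,M)\ge\kappa(Y,f^{*}D)=\kappa(X,D)=2$; in particular $M$ is big on the smooth projective surface $Y$.

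It then remains to contradict the existence of a big saturated line-bundle subsheaf of $\Omega^1_Y(\log\Delta_Y)$. The inclusion $\mathcal{O}_Y(M)\hookrightarrow\Omega^1_Y(\log\Delta_Y)$ defines a logarithmic foliation on $Y$ with canonical sheaf $\mathcal{O}_Y(M)$, so bigness of $M$ would make this foliation of general type. The point where global $F$-splitting enters is that it constrains the birational geometry of $Y$: since $(X,\lfloor B\rfloor)$ is globally $F$-split there is an effective $\Q$-divisor $\Gamma\ge 0$ with $K_X+\lfloor B\rfloor+\Gamma\sim_{\Q}0$, so $-(K_X+\lfloor B\rfloor)$ is $\Q$-effective, and pulling back, $K_Y$ is $\Q$-linearly equivalent to an anti-effective divisor plus exceptional terms. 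Via the determinant one has $K_Y+\Delta_Y\equiv M+N$ for the line bundle $\mathcal{O}_Y(N)$ attached to the rank-one quotient, and bigness of $M$ together with this negativity of $K_Y$ yields a numerical contradiction --- equivalently, the classical Bogomolov--Sommese inequality $\kappa(Y,M)\le 1$ applies, because the global $F$-split structure excludes the characteristic-$p$ pathologies (inseparable or quasi-elliptic fibrations and non-$p$-closed foliations) under which Bogomolov--Sommese can otherwise fail. This contradiction shows $s=0$.

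The main obstacle is precisely this last step: establishing Bogomolov--Sommese vanishing on the smooth model $Y$ in characteristic $p$. Unlike in characteristic zero, a big rank-one subsheaf of $\Omega^1_Y(\log\Delta_Y)$ can exist on special surfaces, and the only leverage available is the global $F$-splitting of the singular pair $(X,\lfloor B\rfloor)$, which directly controls only the degree-zero Frobenius (the section of $(1-p)(K_X+\lfloor B\rfloor)$). The delicate part is to convert this into enough positivity on the resolution to exclude a foliation of general type, notwithstanding that $f$ introduces a codimension-one exceptional locus across which a naive splitting does not extend. This is exactly the gap that Theorem \ref{Introthm:F-purity and F-liftability} closes for $p\le 5$: by upgrading $F$-purity of the local singularities to $F$-liftability it makes the extension theorem \ref{Introthm:LET} available in all characteristics, so that the reduction to $Y$ is valid and the smooth-surface Bogomolov--Sommese inequality for such $Y$ can be invoked.
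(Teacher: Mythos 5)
Your reduction steps are fine and partially parallel the paper: the cases $i=0,2$ are indeed trivial, a globally $F$-split pair is $F$-pure, and Theorem \ref{Introthm:LET} (with $-D$) legitimately transports a hypothetical section to a log resolution $Y$, where saturation yields a line bundle $M\subseteq \Omega^1_Y(\log\Delta_Y)$ with $\kappa(Y,M)\geq 2$. But the proof then stops exactly where the real work begins. Your final step --- ``the classical Bogomolov--Sommese inequality $\kappa(Y,M)\le 1$ applies, because the global $F$-split structure excludes the characteristic-$p$ pathologies'' --- is not an argument; it is a restatement of the theorem for log smooth globally $F$-split pairs, i.e.\ it assumes precisely what must be proven. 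There is no ``classical'' smooth-surface Bogomolov--Sommese statement in characteristic $p$ to invoke, and nothing in your sketch converts the $\Q$-effectivity of $-(K_X+\lfloor B\rfloor)$ into the needed exclusion of general-type foliations on $Y$. Your closing claim that Theorem \ref{Introthm:F-purity and F-liftability} closes this gap is also a misreading: that theorem only makes the extension step (Theorem \ref{Introthm:LET}) available in small characteristic; it says nothing about vanishing on the smooth model.

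The paper's actual mechanism is different and is what your proposal is missing. First, the positivity input from global $F$-splitting is not a foliation argument but $W_2(k)$-liftability: by \cite[Lemma 1.10]{BBKW} the log resolution of a globally $F$-split surface pair lifts to $W_2(k)$, so Deligne--Illusie type Akizuki--Nakano vanishing \cite[Theorem 2.11]{Kaw3} applies --- but only for $D$ \emph{nef and big} (Proposition \ref{prop:ANV}), whereas your saturated $M$ is merely big. Second, bridging ``big'' to ``nef and big'' is exactly why the paper runs the MMP on a dlt model: in the case where $K_X+B$ (or $K_X$) is not pseudo-effective one reaches a Mori fiber space, where the base-point case gives $\rho(X)=1$ (so $D$ is ample, and Proposition \ref{prop:ANV} applies) and the base-curve case is handled by a separate direct fibration argument (Lemma \ref{lem:MFS}: restrict the section to general fibers, which are $\PP^1$ or ordinary elliptic curves, and derive a degree contradiction); in the Calabi--Yau case $K_X\equiv 0$, $B=0$, one uses the Zariski decomposition of $D$ and an MMP to contract the negative part, making $D$ nef and big. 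None of these steps (liftability to $W_2(k)$, the nef-and-big reduction, the Mori-fiber-space analysis) appears in your proposal, and without them the contradiction you want at the end cannot be reached.
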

\begin{rem}\,
\begin{enumerate}
    \item Theorem \ref{Introthm:BSV} holds for an lc projective surface pair $(X,B)$ over a perfect field of characteristic $p>5$ such that $\kappa(X, K_X+\lfloor B\rfloor)=-\infty$. However, this fails in $p\leq 5$ (see \cite[Theorem 1.2]{Kaw3}).
    \item If $p>5$, then Theorem \ref{Introthm:BSV} has already proven in \cite[Proposition 4.13]{Kaw3}.
\end{enumerate}
\end{rem}

The vanishing of Theorem \ref{Introthm:BSV} can be reduced to a dlt blow-up and a log resolution by Theorem \ref{Introthm:LET}.
Specifically, we crucially use the case where $D\neq 0$ of Theorem \ref{Introthm:LET}.
Then the combining the strategy in \cite{Kaw3} and \cite{BBKW}, we can deduce Theorem \ref{Introthm:BSV}.

\subsubsection{Another application}
By combining Theorem \ref{Introthm:F-purity and F-liftability} and Hirokado's result \cite{Hirokado}, we compare $F$-liftability, classical $F$-singularities, and extendability of differential forms for RDPs (see Table \ref{table:RDPs}).

\section{Preliminaries}

\subsection{Notation and terminology}\label{subsection:Notation and terminology}
Throughout the paper, we work over a fixed algebraically closed field $k$ of characteristic $p>0$ unless stated otherwise.
\begin{enumerate}
 \item A \textit{variety} means an integral separated scheme of finite type.
    \item For a proper birational morphism $f\colon Y\to X$ of schemes, we denote by $\Exc(f)$ the reduced $f$-exceptional divisor.
    \item A pair $(X,B)$ consists of a normal variety $X$ and an effective $\Q$-divisor $B$. A pointed pair $(x\in X,B)$ consists of a pair $(X,B) $ and a closed point $x\in X$. When $B=0$, we simply write $(x\in X,B)$ as $(x\in X)$.
    \item A morphism $g\colon (x'\in X',B')\to (x\in X,B)$ of pointed pairs is a morphism $g\colon X'\to X$ of normal varieties such that $g(B')\subset B$ and $g(x')=x$.
    \item A morphism $(u\in U,B_U)\to (x\in X,B)$ of pointed pairs is said to be
    an \textit{\'etale neighborhood of $x$} if $U\to X$ is \'etale and $B_U\cong B\times_X U$.
    \item When we identify two pointed pairs
    $(x_1\in X_1,B_1)$ and $(x_2\in X_2,B_2)$ with a common \'etale neighborhood, we call $(x_1\in X_1,B_1)$ (and $(x_2\in X_2,B_2)$) a \textit{singularity}.
    \item Given a normal variety $X$, we say that $U\subset X$ is a \textit{big open subscheme} if it is an open subscheme whose complement has codimension at least two.
    \item Given a pair $(X,B)$, we say $(X,B)$ is log smooth if $X$ is smooth and $B$ has simple normal crossing (snc, for short) support.
    \item Given a normal variety $X$, a reduced divisor $B$ on $X$, a $\Q$-divisor $D$ on $X$, and an integer $i\geq 0$, we denote $j_{*}(\Omega_U^{i}(\log B)\otimes \sO_U(\lfloor D\rfloor))$ by $\Omega_X^{[i]}(\log B)(D)$, where $j\colon U\hookrightarrow X$ is the inclusion of the log smooth locus $U$.
    \item Given a $\Z$-divisor $D$ on a normal variety $X$, the \textit{Iitaka dimension of $D$} is can be defined (see \cite[Definition 2.18]{GKKP}) and is denoted by $\kappa(X,D)$.
    \item For the definition of the singularities of pairs appearing in the MMP (such as \emph{canonical, klt, plt, lc}) we refer to \cite[Definition 2.8]{Kol13}. Note that we always assume that the boundary divisor is effective although \cite[Definition 2.8]{Kol13} does not impose this assumption.
\end{enumerate}

\subsection{\texorpdfstring{$F$}--singularities}
In this subsection, we summarize the facts about $F$-purity and $F$-liftability.
\begin{defn}\label{def:F-pure}
Let $(X,B)$ be a pair of a normal variety $X$, $B$ an effective $\Q$-divisor, and $P\in X$ a closed point.
\begin{enumerate}
    \item We say $(X,B)$ is \textit{$F$-pure at $P\in X$} if the map
\[
\sO_{X,P}\to F^e_{*}\sO_{X}((p^e-1)B)_{P}
\] 
splits as an $\sO_{X,P}$-module homomorphism for all $e>0$.
We say that $(X,B)$ is \textit{$F$-pure} if it is $F$-pure at every closed point $P\in X$.
\item We say that $(X,B)$ is \textit{globally $F$-split} if 
    \[
    \sO_X\to F^e_{*}\sO_X(\lfloor (p^e-1)B\rfloor) 
    \]
    splits as an $\sO_X$-module homomorphism for some $e>0$.
\item 
We say that $(X,B)$ is \textit{strongly $F$-regular at $P\in X$} if, for any non-zero element $c\in\sO_{X,P}^{\circ}$, there exists a positive integer $e>0$ such that
    \[
    \sO_{X,P}\to F^e_{*}\sO_{X,P}\xrightarrow{\times F^e_{*}c} F^e_{*}\sO_{X,P}
    \]
    splits as an $\sO_{X,P}$-module homomorphism.
    We say that $(X,B)$ is \textit{strongly $F$-regular} if it is strongly $F$-regular at every closed point $P\in X$.
\end{enumerate}
\end{defn}

\begin{defn}[F-liftability]\label{def:F-lift}
Let $X$ be a normal variety and $B=\sum_{r=1}^n B_r$ a reduced divisor on $X$, where every $B_r$ is an irreducible component.  
We denote the ring of Witt vectors of length two by $W_2(k)$.

We say that $(X,B)$ is \textit{$F$-liftable} 
if there exist 
\begin{itemize}
	\item a flat morphism $\widetilde{X} \to \Spec W_2(k)$ together with a closed immersion $i\colon X\hookrightarrow \widetilde{X}$, 
	 \item a closed subscheme $\widetilde{B}_r$ of $\widetilde{X}$ flat over $W_2(k)$ for all $r\in\{1,\ldots,n\}$, and 
	 \item a morphism $\widetilde{F}\colon \widetilde{X}\to \widetilde{X}$ over $W_2(k)$
\end{itemize}
	such that 
\begin{itemize}
	\item the induced morphism $i\times_{W_2(k)}k \colon X\to \widetilde{X}\times_{W_2(k)} k$ is an isomorphism, 
	\item $(i\times_{W_2(k)}k)(B_r)= \widetilde{B}_r\times_{W_2(k)} k$ for all $r\in\{1,\ldots,n\}$, and 
    \item $\widetilde{F}\circ i=i \circ F$, and $\widetilde{F}^{*}(\widetilde{B}_r|_U)=p(\widetilde{B}_r|_U)$ for all $r\in\{1,\ldots,n\}$, where $U$ is the log smooth locus of $(X,B)$.
\end{itemize}
In this case, we also say that $(\tilde{X}, \tilde{B}, \tilde{F})$ is a $W_2(k)$-lift of $(X,B,F)$.

We say that $(X,B)$ is \textit{$F$-liftable at a closed point $P\in X$} if there exists an open neighborhood $U$ of $P\in X$ such that $(U,B|_{U})$ is $F$-liftable.
We say that $(X,B)$ is \textit{locally $F$-liftable} if the pair $(X,B)$ is $F$-liftable at every closed point $P\in X$.
\end{defn}

\section{$F$-liftability and reflexive Cartier operators}\label{sec:F-lift and Car operator}

\subsection{Reflexive Cartier operators}
Throughout this subsection, we use the following convention.

\begin{conv}
Let $X$ be a normal variety and $B$ is a reduced divisor on $X$. 
Let $U$ be the log smooth locus of $(X,B)$ and $j\colon U\hookrightarrow X$ the inclusion.
\end{conv}

The Frobenius pushforward of the de Rham complex
\[
F_{*}\Omega^{\bullet}_U (\log\,B) \colon  F_{*}\sO_U(\log\,B) \xrightarrow{F_{*}d} F_{*} (\Omega^1_U\log\,B)  \xrightarrow{F_{*}d} \cdots
\]
is a complex of $\sO_U$-modules.

We define coherent $\sO_U$-modules by 
\begin{align*}
    &B^{i}_U(\log\,B) \coloneqq \mathrm{Im}(F_{*}d\otimes\sO_U \colon F_{*}\Omega^{i-1}_U(\log\,B)  \to F_{*}\Omega^{i}_U(\log\,B) ),\\
    &Z^{i}_U(\log\,B) \coloneqq \mathrm{Ker}(F_{*}d\otimes\sO_U \colon F_{*}\Omega^{i}_U(\log\,B)  \to F_{*}\Omega^{i+1}_U(\log\,B) ),
\end{align*}
for all $i\geq 0$.

By \cite[Theorem 7.2]{Kat70}, there exists the exact sequence
\begin{equation}
    0 \to B^{i}_U(\log\,B)\to Z^{i}_U(\log\,B)\xrightarrow{C^{i}_{U,B}} \Omega^{i}_U(\log\,B)\to 0,\label{log smooth 2}
\end{equation}
resulting from the logarithmic Cartier isomorphism.
Moreover, $B^{i}_U(\log\,B)$ and $Z^{i}_U(\log\,B) $ are locally free (cf.~\cite[Lemma 3.2]{Kaw4}).

\begin{defn}\label{def:reflexive Carter operators}
We define reflexive $\sO_X$-modules by 
\begin{align*}
    &B^{[i]}_X(\log B)\coloneqq j_{*}B^{i}_U(\log\,B)\,\,\text{and}\\
    &Z^{[i]}_X(\log B)\coloneqq j_{*}Z^{i}_U(\log B)
\end{align*}
for all $i\geq 0$.
The \textit{$i$-th reflexive Cartier operator}
\[
C^{[i]}_{X,B}\colon Z^{[i]}_X(\log\,B)\to \Omega^{[i]}_X(\log\,B)
\]
is defined as $j_{*}C^{i}_{U,B}$ for all $i\geq 0$.
\end{defn}

\subsection{Splitting sections and canonical liftings}
In this subsection, we recall splitting sections of Frobenius maps associated to $F$-liftings (\cite[Section 2]{BTLM}, \cite[Section 3.2]{AWZ}) and canonical liftings associated to splitting sections (\cite[Section 3.5]{Zda18}, \cite[Section A.1]{AZ21}).

\subsubsection{Splitting sections associated to $F$-liftings}\label{subsection:splitting section associated to F-lifting}

Let $X$ be a smooth $F$-liftable variety and $B$ a smooth prime divisor on $X$.
Fix a $W_2(k)$-lifting $(\tilde{X}, \tilde{B},\tilde{F}_X)$ of $(X,B,F)$.
Take $\tilde{f}\in \sO_{\tilde{X}}$. 
Since $\tilde{F}_X(\tilde{f})$ and $\tilde{f}^p$ coincide with each other on $X$, we have $\tilde{F}_X(\tilde{f})-\tilde{f}^p\in p\sO_{\tilde{X}}$.
We have an isomorphism $\sO_{X}\xrightarrow{p\times } p\sO_{\tilde{X}}$ resulting from the exact sequence
\[
0\to pW_2(k)\to W_2(k)\xrightarrow{p\times } pW_2(k)\to 0.
\]
We define $\delta_{\tilde{F}_X}(\tilde{f}) \in \sO_{X}$ as a corresponding element of $\tilde{F}_X(\tilde{f})-\tilde{f}^p\in p\sO_{\tilde{X}}$ via the above isomorphism, that is, 
    \[
    \delta_{\tilde{F}_X}\colon \sO_{\tilde{X}}\to \sO_{X};\,\, \tilde{f}\mapsto (\tilde{F}_X(\tilde{f})-\tilde{f}^p)/p.
    \] 

We have the pullback morphism
\begin{equation}\label{eq:pullback}
    \tilde{F}_X^{*}\colon\Omega_{\tilde{X}}^{i}(\log\,\tilde{B})\to (\tilde{F}_X)_{*}\Omega_{\tilde{X}}^{i}(\log\,\tilde{B})
\end{equation}
Then we can confirm that $\mathrm{Im}(\tilde{F}_X^{*})\subset p(\tilde{F}_X)_{*}\Omega_{\tilde{X}}^{i}(\log\,\tilde{B})$.
For example, let $\tilde{B}=\{\tilde{b}=0\}$ and we check $\tilde{F}_X^{*}(d\tilde{b}/\tilde{b})\in p(\tilde{F}_{\tilde{X}})_{*}\Omega_{\tilde{X}}^{i}(\log\,\tilde{B})$.
Since $\tilde{F}_X^{*}\tilde{B}=p\tilde{B}$, we have
$\tilde{F}_X^{*}\tilde{b}=\tilde{u}\tilde{b}^p$ for some $\tilde{u}\in \sO_{\tilde{X}}^{\times}$.
Then $\tilde{u}\tilde{b}^p=\tilde{F}_X^{*}\tilde{b}=\tilde{b}^p+p\tilde{\delta(\tilde{b})},$ where $\tilde{\delta(\tilde{b})}$ is a lift of $\delta(\tilde{b})$.
Thus $p\tilde{\delta(\tilde{b})}=(u-1)\tilde{b}^p \in \tilde{b}^p\sO_{\tilde{X}}$.
Since $\tilde{b}^p$ is a non-zero divisor and $\sO_{\tilde{X}}$ is flat over $W_{2}(k)$, we have $\Tor_1^{W_{2}(k)}(k, \sO_{\tilde{X}}/(\tilde{b}^p))=0$. 
Therefore, by the local criterion for flatness, $\sO_{\tilde{X}} / (\tilde{b}^p)$ is flat over $W_{2}(k)$, and we have 
\[
\tilde{b}^p\sO_{\tilde{X}} \cap p \sO_{\tilde{X}} = p \tilde{b}^p \sO_{\tilde{X}}.
\]
Thus, we have $p\tilde{\delta(\tilde{b})} \in p \tilde{b}^p \sO_{\tilde{X}}$, and $\delta(\tilde{b}) =  b^p a$ for some $a \in \sO_{X}$, where $b = \tilde{b} \mod p$.
Therefore, $d\delta(\tilde{b})=b^pda$, and
we have $d\tilde{\delta(\tilde{b})}-\tilde{b}^pd\tilde{a}\in p\Omega^{i}_{\tilde{X}}$.
Now, we obtain
\[
d\tilde{F}_X^{*}\tilde{b}/\tilde{F}_X^{*}\tilde{b}
=p(\tilde{b}^{p-1}d\tilde{b}+d\tilde{\delta(\tilde{b})})/\tilde{u}\tilde{b}^p
=p\tilde{u}^{-1}(d\tilde{b}/\tilde{b}+d\tilde{a})\in p(\tilde{F}_X)_{*}\Omega_{\tilde{X}}^{i}(\log\,\tilde{B}),
\]
as desired.

Using $(F_X)_{*}\Omega_{X}^{i}(\log\,B)\cong p(\tilde{F}_X)_{*}\Omega_{\tilde{X}}^{i}(\log\,\tilde{B})$, the following map is induced from \eqref{eq:pullback}:
\[
\xi^{i}_{X,B}\colon \Omega_X^{i}(\log\,B)\to (F_X)_{*}\Omega_X^{i}(\log\,B).
\]
Moreover, $\xi^{i}_{X,B}$ is a split injection by \cite[Proposition 3.2.1 and Variant 3.2.2]{AWZ}.
Similarly, the pullback morphisms
\[
(\tilde{F}_X)^{*}\colon\Omega_{\tilde{X}}^{i}\to (\tilde{F}_X)_{*}\Omega_{\tilde{X}}^{i}
\]
\[
(\tilde{F}_B)^{*}\colon\Omega_{\tilde{B}}^{i}\to (\tilde{F}_B)_{*}\Omega_{\tilde{B}}^{i}
\]
induce split injections
\[
\xi^{i}_{X}\colon \Omega_X^{i}\to (F_X)_{*}\Omega_X^{i}
\]
\[
\xi^{i}_{B}\colon \Omega_B^{i}\to (F_B)_{*}\Omega_B^{i}.
\]
Then we have the following commutative diagram:
\begin{equation*}
\xymatrix{ 0\ar[r]&\Omega^{i}_X\ar[r]\ar[d]^-{\xi^{i}_{X}} & \Omega^{i}_X(\log\,B)\ar[r]^-{\mathrm{res}}\ar[d]^-{\xi^{i}_{X,B}}& \Omega^{i-1}_B \ar[r]\ar[d]^-{\xi^{i-1}_{B}}& 0\\
           0\ar[r]&(F_X)_{*}\Omega^{i}_X\ar[r] & (F_X)_{*}\Omega^{i}_X(\log\,B)\ar[r]^-{\mathrm{res}}& (F_B)_{*}\Omega^{i-1}_B \ar[r]& 0
           .}
\end{equation*}
Taking $i=\dim\,X$ and applying $R\sHom_{\sO_X}(-,\omega_X)$, we have the following commutative diagram:
\begin{equation*}
\xymatrix{ 0\ar[r]&\sO_X(-B)\ar[r] & \sO_X\ar[r]& \sO_B \ar[r]& 0\\
           0\ar[r]&(F_X)_{*}\sO_X(-B)\ar[r]\ar[u]^-{(\xi^{\dim\,X}_{X,B})^{*}} & (F_X)_{*}\sO_X\ar[r]\ar[u]^-{(\xi^{\dim\,X}_{X})^{*}}& (F_B)_{*}\sO_B \ar[r]\ar[u]^-{(\xi^{\dim\,B}_{B})^{*}}& 0
           .}
\end{equation*}
We set $\sigma(\tilde{F}_X)\coloneqq (\xi^{\dim\,X}_{X})^{*}$ and $\sigma(\tilde{F}_B)\coloneqq (\xi^{\dim\,B}_{X})^{*}$.
Since they provide splitting sections of Frobenius maps (see \cite[Proposition 3.2.1 and Variant 3.2.2]{AWZ}), we call them \textit{splitting sections associated to $\tilde{F}_X$ and $\tilde{F}_B$}, respectively.
Therefore, we obtain a Frobenius splitting of $X$ compatible with $B$:
\begin{equation*}
\xymatrix{ \sO_X\ar[r]& \sO_B \\
           (F_X)_{*}\sO_X\ar[r]\ar[u]^-{\sigma(\tilde{F}_X)}& (F_B)_{*}\sO_B\ar[u]^-{\sigma(\tilde{F}_B)}
           .}
\end{equation*}
\subsubsection{Canonical liftings associated to splitting sections}\label{section:canonical lift}

Let $X$ be a globally $F$-split variety. We fix a splitting $\sigma\colon F_{*}\sO_X\to \sO_X$ of the Frobenius map.
We define a $W_2\sO_X$-module $\sO_{\tilde{X}(\sigma)}$ by the following pushout:
\[
\xymatrix{
F_{*}\sO_X \ar[r]^-{V} \ar[d]_{\sigma} & W_2\sO_X \ar[d]^{\pi_X}  \\
\sO_X \ar[r] &\sO_{\tilde{X}(\sigma)}\pushoutcorner
}
\]
where $V\colon F_{*}\sO_X\to W_2\sO_X$ denotes the Verschiebung.
The sheaf $\sO_{\tilde{X}(\sigma)}$ is a sheaf of rings since $\sO_{\tilde{X}(\sigma)}=W_2\sO_X/V(\mathrm{Ker}(\sigma))$ by the definition of a pushout and $V(\mathrm{Ker}(\sigma))\subset W_2\sO_X$ is an ideal (cf.~\cite[Lemma A.1.1]{AZ21}). 
We set $\tilde{X}({\sigma})\coloneqq (|X|, \sO_{\tilde{X}(\sigma)})$.
Then we can confirm that $\tilde{X}({\sigma})$ is a $W_2(k)$-lifting of $X$ (see \cite[Corollary A.1.2]{AZ21}).
We call $\tilde{X}(\sigma)$ a \textit{canonical lift associated to a splitting section $\sigma$}.

Suppose that $X$ is smooth and $F$-liftable. Fix a $W_2(k)$-lifting $(\tilde{X}, \tilde{F}_X)$ of $(X,F_X)$.
As in the previous section, we can take a splitting section $\sigma(\tilde{F}_X)$ associated to $\tilde{F}_X$.
In this case, we call $\tilde{X}(\sigma(\tilde{F}_X))$ a \textit{canonical lift associated to $\tilde{F}_X$}, and denote it by $\tilde{X}(\tilde{F}_X)$.

\subsection{Characterization of $F$-liftability via reflexive Cartier operators}

In this subsection, we prove Theorem \ref{thm:characterize F-lift via Cartier operators}, which asserts that $F$-liftability can be characterized by a splitting of the reflexive Cartier operator. For the proof, we use a similar technique to \cite[Theorem 1.13]{Langer24}.

\begin{thm}\label{thm:characterize F-lift via Cartier operators}
   Let $(X,B)$ be a pair of a normal variety $X$ and a reduced divisor $B$.
   Then the following are equivalent.
   \begin{enumerate}
       \item[\textup{(1)}] $(X,B)$ is $F$-liftable.
       \item[\textup{(2)}] $C_{X,B}^{[i]}\colon Z^{[i]}_{X}(\log\,B)\to \Omega^{[i]}_{X}(\log\,B)$ are split surjections for all $i\geq 0$.
       \item[\textup{(3)}] $C_{X,B}^{[1]}\colon Z^{[1]}_{X}(\log\,B)\to \Omega^{[1]}_{X}(\log\,B)$ is a split surjection.
   \end{enumerate}
\end{thm}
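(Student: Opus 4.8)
The plan is to prove the cycle of implications $(1)\Rightarrow(2)\Rightarrow(3)\Rightarrow(1)$, where the first and last implications carry the real content and the middle one is immediate.

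For $(1)\Rightarrow(2)$, I would first reduce to the log smooth locus $U=U_{(X,B)}$, since all the sheaves in question are reflexive and defined as pushforwards $j_*$ from $U$; a splitting over $U$ extends uniquely to a splitting of the reflexive Cartier operator by taking $j_*$. So the task becomes: given a smooth $F$-liftable pair, produce compatible splittings of $C^i_{U,B}\colon Z^i_U(\log B)\to\Omega^i_U(\log B)$ for all $i$. Here I would invoke the machinery of Subsection \ref{subsection:splitting section associated to F-lifting}: fixing a $W_2(k)$-lift $(\tilde X,\tilde B,\tilde F_X)$, the pullback $\tilde F_X^*$ on log differentials lands in $p\,(\tilde F_X)_*\Omega^i_{\tilde X}(\log\tilde B)$, and dividing by $p$ yields the split injection $\xi^i_{X,B}\colon\Omega^i_X(\log B)\to (F_X)_*\Omega^i_X(\log B)$. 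The key point is that this map $\xi^i_{X,B}$ factors through $Z^i_U(\log B)$ (its image consists of closed forms, since it arises from a $p$-divided pullback which is killed by $d$ after the Cartier identification), and that the composite $C^i_{U,B}\circ\xi^i_{X,B}$ is the identity on $\Omega^i_U(\log B)$. This last identity is the concrete incarnation of the Cartier isomorphism: the Cartier operator inverts the Frobenius pullback up to the expected normalization. Granting this, $\xi^i_{X,B}$ (restricted to land in $Z^i$) is exactly the desired splitting section.

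The implication $(2)\Rightarrow(3)$ is trivial: it is the special case $i=1$.

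The substance is in $(3)\Rightarrow(1)$, and I expect this to be the main obstacle. The strategy, following the technique of \cite[Theorem 1.13]{Langer24}, is to reverse the construction above: from a splitting $s\colon\Omega^{[1]}_X(\log B)\to Z^{[1]}_X(\log B)$ of $C^{[1]}_{X,B}$, build a lift of Frobenius. Again I would work on $U$ and extend by reflexivity at the end. Over $U$, a splitting of $C^1_{U,B}$ is equivalent data to a lift of Frobenius locally, but the real work is globalizing: one must check that the locally defined Frobenius lifts $\tilde F_X$ (determined by the splitting via the formula $\tilde f\mapsto\tilde f^p+p\,\delta(\tilde f)$, where $\delta$ is recovered from $s$) glue to a genuine morphism $\tilde X\to\tilde X$ over $W_2(k)$, and that they respect the boundary condition $\tilde F_X^*\tilde B_r=p\tilde B_r$. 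The dictionary here is that a splitting of the (non-log) Cartier operator $C^1_X$ corresponds to a Frobenius lift of $X$, and the extra logarithmic information in $C^1_{X,B}$ pins down the compatibility with $B$. I would make this precise by noting that the splitting of $C^{[1]}_{X,B}$ restricts to a splitting of $C^{[1]}_X$ (via the residue sequence displayed in the excerpt) and simultaneously induces the splitting $\xi^{1}_B$ on each boundary component, so that the Frobenius lift it produces automatically satisfies $\tilde F_X^*(\tilde B_r|_U)=p(\tilde B_r|_U)$. The delicate part is showing that splitting only the $i=1$ operator suffices to build the lift, rather than needing all $i$ simultaneously — this is exactly why the equivalence of $(3)$ with $(1)$ is the nontrivial assertion, and it rests on the fact that a $W_2(k)$-lift of Frobenius is determined by its effect on functions and one-forms.

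Throughout, the technical heart is the interplay between the reflexive operations $j_*$ and the constructions on $U$: I would need to verify that forming $\tilde X$ as a scheme over $W_2(k)$ and checking flatness, the isomorphism $\tilde X\times_{W_2(k)}k\cong X$, and the boundary flatness of $\tilde B_r$ can all be carried out after the codimension-two restriction to $U$ and then recovered on $X$ by normality. This reflexive extension step, combined with verifying that the splitting of a single Cartier operator globalizes to a Frobenius lift compatible with $B$, is where I anticipate the bulk of the difficulty.
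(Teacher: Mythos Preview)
Your outline for $(1)\Rightarrow(2)$ is essentially what the paper does (it cites \cite[Lemma 3.8]{Kaw4}, which is exactly the $\xi^i$ construction you describe). The implication $(2)\Rightarrow(3)$ is of course immediate.

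The gap is in $(3)\Rightarrow(1)$. Your plan to ``form $\tilde X$ as a scheme over $W_2(k)$ \dots\ after the codimension-two restriction to $U$ and then recover on $X$ by normality'' does not work as stated: normality is a property of $X$ over $k$, and there is no general mechanism for extending a $W_2(k)$-lift of a big open $U\subset X$ to a $W_2(k)$-lift of $X$. Likewise, the suggestion that the splitting of $C^{[1]}_{X,B}$ ``restricts to a splitting of $C^{[1]}_X$ via the residue sequence'' is not automatic: a section $s\colon \Omega^1_X(\log B)\to Z^1_X(\log B)$ need not send $\Omega^1_X$ into $Z^1_X$.

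The paper's route around this is the \emph{canonical lift} construction (Section~\ref{section:canonical lift}, after \cite[Appendix A]{AZ21}). From the splitting of $C^{[1]}$ on $U$ one first gets $F$-liftability of $(U,B_U)$ by \cite[Proposition 3.2.1]{AWZ}, and hence compatible Frobenius splittings $\sigma(\tilde F_U)\colon F_*\sO_U\to\sO_U$ and $\sigma(\tilde F_{B_U})\colon F_*\sO_{B_U}\to\sO_{B_U}$. These are maps of reflexive sheaves, so \emph{they} (not the lift itself) extend to $X$ and $B$ by normality, yielding a global splitting $\sigma_X\colon F_*\sO_X\to\sO_X$ compatible with $B$. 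The $W_2(k)$-lift $\tilde X(\sigma_X)$ is then produced \emph{globally} as the pushout of $W_2\sO_X$ along the Verschiebung and $\sigma_X$; similarly for $\tilde B(\sigma_B)$. Finally the Frobenius lift on $\tilde U(\tilde F_U)\hookrightarrow\tilde X(\sigma_X)$ is extended to all of $\tilde X(\sigma_X)$ by \cite[Theorem 3.3.6 (a)-(iii)]{AWZ}, and Lemma~\ref{lem:compatibility} guarantees the boundary condition $\tilde F_X^*\tilde B=p\tilde B$ on $U$. So the order of operations is: pass to a Frobenius splitting first, extend \emph{that} by reflexivity, then build the lift globally from the splitting---not build the lift on $U$ and try to extend it.
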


\begin{lem}\label{lem:induced}
  Let $U$ be a smooth variety and $B$ a smooth prime divisor on $U$.
Suppose that $(U,B_U)$ is $F$-liftable, and
fix a $W_2(k)$-lifting $(\tilde{U}, \tilde{B_U}, \tilde{F}_U)$ of $(U,B_U,F_U)$.
Then 
$\tilde{F}_U|_{\tilde{B_U}}\colon \tilde{B_U}\to \tilde{B_U}$ is induced.
\end{lem}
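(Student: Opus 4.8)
The plan is to show that $\tilde{F}_U$ carries the closed subscheme $\tilde{B_U}$ into itself, so that it restricts to an endomorphism of $\tilde{B_U}$; since this is a statement about a morphism of schemes it may be checked locally on $\tilde{U}$, and once the restriction exists it is canonical and glues without further work. First I would pass to an affine open on which the smooth prime divisor $\tilde{B_U}$ is cut out by a single equation $\tilde{b}\in\sO_{\tilde{U}}$. Here $\tilde{b}$ is a nonzerodivisor and reduces modulo $p$ to a local equation $b$ of $B$, using the flatness of $\tilde{B_U}$ over $W_2(k)$ that is part of the definition of $F$-liftability.

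The key input is the compatibility condition $\tilde{F}_U^{*}(\tilde{B_U}|_U)=p(\tilde{B_U}|_U)$ built into Definition \ref{def:F-lift}. Read as an equality of Cartier divisors it gives $\Div(\tilde{F}_U^{*}\tilde{b})=p\,\Div(\tilde{b})$, so that $\tilde{F}_U^{*}\tilde{b}$ and $\tilde{b}^p$ generate the same principal ideal; this is exactly the local identity $\tilde{F}_U^{*}\tilde{b}=\tilde{u}\tilde{b}^p$ with $\tilde{u}\in\sO_{\tilde{U}}^{\times}$ already extracted in Subsection \ref{subsection:splitting section associated to F-lifting}. Consequently $\tilde{F}_U^{*}$ sends the ideal $(\tilde{b})$ of $\tilde{B_U}$ into $(\tilde{F}_U^{*}\tilde{b})=(\tilde{b}^p)\subseteq(\tilde{b})$, which is precisely the scheme-theoretic statement $\tilde{F}_U(\tilde{B_U})\subseteq\tilde{B_U}$. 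Therefore $\tilde{F}_U$ induces a morphism $\tilde{F}_U|_{\tilde{B_U}}\colon\tilde{B_U}\to\tilde{B_U}$, as claimed.

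Finally I would record that this induced morphism is genuinely a lift of Frobenius on $\tilde{B_U}$: reducing modulo $p$, the condition $\tilde{F}\circ i=i\circ F$ forces $\tilde{F}_U$ to reduce to $F_U$, and $F_U$ restricts on $B$ to the absolute Frobenius $F_B$, so $\tilde{F}_U|_{\tilde{B_U}}$ reduces to $F_B$. I do not expect a serious obstacle here: the whole content is the passage from the divisorial identity $\tilde{F}_U^{*}\tilde{B_U}=p\tilde{B_U}$ to the ideal inclusion $\tilde{F}_U^{*}(\tilde{b})\subseteq(\tilde{b})$. The only point demanding a little care is that the compatibility condition must be read at the level of local equations rather than merely as an identity of divisor classes, which is legitimate because $\tilde{B_U}$ is Cartier and $\tilde{b}$ is a nonzerodivisor.
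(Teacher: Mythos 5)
Your proof is correct and follows essentially the same route as the paper's: the compatibility condition $\tilde{F}_U^{*}\tilde{B_U}=p\tilde{B_U}$ gives $\tilde{F}_U^{*}\sO_{\tilde{U}}(-\tilde{B_U})=\sO_{\tilde{U}}(-p\tilde{B_U})\subset \sO_{\tilde{U}}(-\tilde{B_U})$, so the ideal sheaf of $\tilde{B_U}$ pulls back into itself and the restriction is induced. The paper states this in one line at the level of ideal sheaves, while you unwind it with local equations $\tilde{F}_U^{*}\tilde{b}=\tilde{u}\tilde{b}^p$; this is the same argument in local coordinates.
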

\begin{proof}
    Since $\tilde{F}_U^{*}\sO_{\tilde{U}}(-\tilde{B_U})=\sO_{\tilde{U}}(-p\tilde{B_U})\subset \sO_{\tilde{U}}(-\tilde{B_U})$, the desired map is induced.
\end{proof}

\begin{lem}\label{lem:compatibility}
    Let $(U,B_U)$ be an $F$-liftable log smooth pair.
    We fix a $W_2(k)$-lifting $(\tilde{U}, \tilde{B_U},\tilde{F}_U)$of $(U,B_U,F_U)$.
    Let $\tilde{F}_{B_U}\coloneqq \tilde{F}_U|_{\tilde{B_U}}$ be the induced lifting of $F_{B_U}$ (Lemma \ref{lem:induced}).
    Let $\tilde{U}(\tilde{F}_U)$ and $\tilde{B}(\tilde{F}_{B_U})$ be the canonical liftings of $X$ and $B$ induced by $\tilde{F}_U$ and $\tilde{F}_{B_U}$.
    Then there exists a lift  $\tilde{F}'_U\colon \tilde{U}(\tilde{F}_U)\to \tilde{U}(\tilde{F}_U)$ of $F_U$
   such that $(\tilde{F}'_U)^{*}\tilde{B_U}(\tilde{F}_{B_U})=p\tilde{B_U}(\tilde{F}_{B_U})$.
\end{lem}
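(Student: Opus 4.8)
The plan is to build $\tilde F'_U$ directly on the Witt-vector model of the canonical lift and then to check the divisorial condition along $\tilde{B_U}(\tilde F_{B_U})$ by a local computation. I work Zariski-locally on $U$ and choose a local equation $b\in\sO_U$ for the smooth prime divisor $B_U$, together with a lift $\tilde b\in\sO_{\tilde U}$. Since $(\tilde U,\tilde{B_U},\tilde F_U)$ is an $F$-lifting of the pair, the basic input is the relation $\tilde F_U^{*}\tilde b=\tilde u\,\tilde b^{p}$ with $\tilde u\in\sO_{\tilde U}^{\times}$, equivalently $\delta_{\tilde F_U}(\tilde b)\in b^{p}\sO_U$, established in \S\ref{subsection:splitting section associated to F-lifting}.

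Recall from \S\ref{section:canonical lift} the explicit models $\sO_{\tilde U(\tilde F_U)}=W_2\sO_U/V(\ker\sigma(\tilde F_U))$ and $\sO_{\tilde{B_U}(\tilde F_{B_U})}=W_2\sO_{B_U}/V(\ker\sigma(\tilde F_{B_U}))$. To produce $\tilde F'_U$, note first that the naive Witt Frobenius $W_2(F_U)$ does \emph{not} descend to this quotient: it sends $V(a)$ to $V(a^{p})$, and $\sigma(\tilde F_U)(a^{p})=a$, so it fails to preserve $V(\ker\sigma(\tilde F_U))$ (this reflects the fact that $F$-splitting alone does not force $F$-liftability). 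Instead I define $\tilde F'_U$ as the map induced by a lift $\Phi\colon W_2\sO_U\to W_2\sO_U$ of the absolute Frobenius built from the data of $\tilde F_U$ (a $\delta_{\tilde F_U}$-correction of the Witt Frobenius), arranged so that $\Phi$ reduces to $F_U$ and carries $V(\ker\sigma(\tilde F_U))$ into itself; this is exactly where the hypothesis that $\sigma(\tilde F_U)$ arises from a genuine Frobenius lift is used. For the mere existence of a Frobenius lift on $\tilde U(\tilde F_U)$ one could instead transport $\tilde F_U$ across a (non-canonical) isomorphism of $W_2(k)$-lifts $\tilde U\cong\tilde U(\tilde F_U)$, available since $U$ is smooth and the statement is local; but matching the boundary with the \emph{specific} lift $\tilde{B_U}(\tilde F_{B_U})$ is what forces the functorial construction.

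Next I identify the boundary inside $\tilde U(\tilde F_U)$. By the commutative diagram of splitting sections in \S\ref{subsection:splitting section associated to F-lifting} together with Lemma \ref{lem:induced}, the splitting $\sigma(\tilde F_{B_U})$ is the restriction of $\sigma(\tilde F_U)$ along $B_U\hookrightarrow U$. Hence the canonical-lift construction is functorial for this closed immersion and yields a closed immersion $\tilde{B_U}(\tilde F_{B_U})\hookrightarrow\tilde U(\tilde F_U)$ over $W_2(k)$, flat over $W_2(k)$ and lifting $B_U\hookrightarrow U$, through which $\tilde F'_U$ restricts to the Frobenius lift of $\tilde{B_U}(\tilde F_{B_U})$.

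Finally I verify $(\tilde F'_U)^{*}\tilde{B_U}(\tilde F_{B_U})=p\,\tilde{B_U}(\tilde F_{B_U})$, which I expect to be the main obstacle. It is strictly stronger than $\tilde F'_U$ merely preserving the subscheme: preservation of the defining ideal only gives $(\tilde F'_U)^{*}\tilde b\in(\tilde b)$, whereas the divisorial identity requires $(\tilde F'_U)^{*}\tilde b=\tilde u'\,\tilde b^{p}$ for a unit $\tilde u'$. I expect to obtain this by writing $(\tilde F'_U)^{*}\tilde b$ out in the Witt-vector model and controlling the $\delta$-correction modulo $p^{2}$, feeding in the relation $\tilde F_U^{*}\tilde b=\tilde u\,\tilde b^{p}$ for the original $F$-lifting; the flatness of $\sO_{\tilde U}/(\tilde b^{p})$ over $W_2(k)$, as in \S\ref{subsection:splitting section associated to F-lifting}, is used to promote the congruence modulo $p$ to the exact divisorial equality. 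The delicate bookkeeping of these $\delta$-corrections, together with the well-definedness of $\Phi$ on the quotient, is where I anticipate the real work to lie.
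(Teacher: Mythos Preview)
Your proposal has a genuine gap: the map $\Phi\colon W_2\sO_U\to W_2\sO_U$ is never constructed, and the assertion that it can be ``arranged'' to preserve $V(\ker\sigma(\tilde F_U))$ is precisely what has to be proved. Note that $\delta_{\tilde F_U}$ is a function $\sO_{\tilde U}\to\sO_U$, not $\sO_U\to\sO_U$, so there is no evident way to use it to ``correct'' the Witt Frobenius on $W_2\sO_U$. More seriously, you explicitly dismiss as ``non-canonical'' the approach that actually works and that the paper adopts. The isomorphism $\tilde U(\tilde F_U)\cong\tilde U$ of \cite[Theorem~2.7]{AWZ2} is \emph{canonical}: it is realized by the ring homomorphism
\[
v_{\tilde F_U}\colon\sO_{\tilde U}\longrightarrow W_2\sO_U,\qquad \tilde f\longmapsto\bigl(\tilde f\bmod p,\ \delta_{\tilde F_U}(\tilde f)\bigr),
\]
followed by the quotient $\pi_{\tilde F_U}\colon W_2\sO_U\to\sO_{\tilde U(\tilde F_U)}$. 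Because $\tilde F_{B_U}=\tilde F_U|_{\tilde{B_U}}$, one has $\delta_{\tilde F_{B_U}}=\delta_{\tilde F_U}|_{\sO_{\tilde{B_U}}}$, so $v_{\tilde F_{B_U}}$ is literally the restriction of $v_{\tilde F_U}$, and the same argument as in \cite[Theorem~2.7]{AWZ2} shows that $\pi_{\tilde F_{B_U}}\circ v_{\tilde F_{B_U}}$ is an isomorphism. Thus the canonical isomorphism $\tilde U(\tilde F_U)\cong\tilde U$ carries the closed subscheme $\tilde{B_U}(\tilde F_{B_U})$ onto $\tilde{B_U}$.

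Now simply \emph{define} $\tilde F'_U$ to be the transport of $\tilde F_U$ across this isomorphism. The hypothesis $\tilde F_U^{*}\tilde{B_U}=p\,\tilde{B_U}$ then translates immediately into $(\tilde F'_U)^{*}\tilde{B_U}(\tilde F_{B_U})=p\,\tilde{B_U}(\tilde F_{B_U})$, with no Zariski-local choice of equation, no gluing, and none of the ``delicate bookkeeping of $\delta$-corrections'' you anticipate. Your observation that $\sigma(\tilde F_{B_U})$ is the restriction of $\sigma(\tilde F_U)$, whence the canonical lifts embed compatibly, is correct and is exactly what makes the square formed by $v_{\tilde F_U},v_{\tilde F_{B_U}},\pi_{\tilde F_U},\pi_{\tilde F_{B_U}}$ commute; you were one step from the argument.
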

\begin{proof}
We first assume that $B_U$ is a prime divisor.
There exists a canonical isomorphism $\tilde{U}(\tilde{F}_U)\to \tilde{U}$ by \cite[Theorem 2.7]{AWZ2}.
We will show that this isomorphism induces an isomorphism $\tilde{B_U}(\tilde{F}_{B_U})\to \tilde{B_U}$.
Then we can take $\tilde{F}'_U$ as a map corresponds to $\tilde{F}_U$.

We define ring homomorphisms $v_{\tilde{F}_{U}}$ and $v_{\tilde{F}_{B_U}}$ by
    \[
    v_{\tilde{F}_{U}}\colon  \sO_{\tilde{U}}\to W_2\sO_{U}; \tilde{f}\mapsto (\tilde{f}\,\,\text{mod}\,p, \delta_{\tilde{F}_{U}}(\tilde{f}))
    \] and 
    \[v_{\tilde{F}_{B_U}}\colon\sO_{\tilde{B_U}}\to W_2\sO_{B_U}; \tilde{b}\mapsto(\tilde{b}\,\,\text{mod}\,p, \delta_{\tilde{F}_{U_B}}(\tilde{b})),
    \] 
    where we refer to Section \ref{subsection:splitting section associated to F-lifting} for the definition of $\delta$.
    By the construction of canonical liftings (Section \ref{section:canonical lift}), we have the natural quotient maps
    \[
    \pi_{\tilde{F}_U}\colon W_2\sO_U\to \sO_{\tilde{U}(\tilde{F}_U)}
    \]
    and 
    \[
    \pi_{\tilde{F}_{B_U}}\colon W_2\sO_{B_U}\to \sO_{\tilde{B_U}(\tilde{F}_{B_U})}.
    \]
    Then we have the following commutative diagram:
    \begin{equation*}
\xymatrix{ \sO_{\tilde{U}}\ar[r]^-{v_{\tilde{F}_{U}}}\ar[d] & W_2\sO_U\ar[r]^-{\pi_{\tilde{F}_U}}\ar[d] & \sO_{\tilde{U}(\tilde{F}_U)}\ar[d] \\
            \sO_{\tilde{B_U}}\ar[r]^-{v_{\tilde{F}_{B_U}}} \ar[r] &W_2\sO_{B_U}\ar[r]^-{\pi_{\tilde{F}_{B_U}}}   & \sO_{\tilde{B_U}(\tilde{F}_{B_U})}.
            }
\end{equation*}
    As proven in \cite[Theorem 2.7]{AWZ2}, the composition of top morphisms is an isomorphism, and the same proof shows that the composition of bottom maps is also isomorphism.
    Therefore, the assertion holds when $B_U$ is prime.
    In a general case, since $v_{\tilde{F}_{U}}$ does not depend on $B_U$, applying the argument above to each component of $B_U$, we can conclude.
\end{proof}

\begin{proof}[Proof of Theorem \ref{thm:characterize F-lift via Cartier operators}]
    (1)$\Rightarrow $(3) follows from the proof of \cite[Lemma 3.8]{Kaw4} and (3)$\Rightarrow$(2) is obvious.
    We prove (2)$\Rightarrow $(1).
    Let $U$ be the log smooth locus of $(X,B)$ and $B_U\coloneqq B|_{U}$.
    Since the exact sequence  
    \[
    0\to B_U^{1}(\log\,B_U) \to Z_U^{1}(\log\,B_U) \to \Omega_U^{1}(\log\,B_U)\to 0
    \]
   splits, the pair $(U, B_U)$ is $F$-liftable by \cite[Proposition 3.2.1 and Variant 3.2.2]{AWZ}. 
   We first assume that $B$ is prime.
   We fix $W_2(k)$-liftings $(\tilde{U}, \tilde{B_U}, \tilde{F}_U)$ of $(U,B_U,F_U)$.
   Let $\tilde{F}_{B_{U}}\coloneqq \tilde{F}_U|_{\tilde{U_B}}$ be the induced map on $\tilde{B_U}$, which is a lift of $F_{B_{U}}$ (Lemma \ref{lem:induced}).
   Let $\sigma(\tilde{F}_U)\colon F_{*}\sO_U\to \sO_U$ and $\sigma(\tilde{F}_{B_U})\colon F_{*}\sO_{B_U}\to \sO_{B_U}$ be the splitting sections associated to $\tilde{F}_U$ and $\tilde{F}_{B_U}$, respectively.
   Then as we have confirmed in Section \ref{subsection:splitting section associated to F-lifting}, we have the following commutative diagram:
    \begin{equation*}
\xymatrix{ F_{*}\sO_{U}\ar[r]^-{\sigma(\tilde{F}_U)}\ar[d] & \sO_U\ar[d] \\
            F_{*}\sO_{B_U}\ar[r]^-{\sigma(\tilde{F}_B)} \ar[r] & \sO_{B_U}.}
\end{equation*}
   Since $X$ is normal, taking the pushforward by the inclusion $j\colon U\hookrightarrow X$, we obtain a splitting
   \[
   \sigma_X\coloneqq j_{*}\sigma(\tilde{F}_U)\colon  F_{*}\sO_X\to \sO_X
   \]
   of $F_X$.
   By \cite[Lemma 1.1.7 (ii)]{fbook}, the map $\sigma(\tilde{F}_{B_U})\colon F_{*}\sO_{B_U}\to \sO_{B_U}$ extends to $\sigma_B\colon F_{*}\sO_{B}\to \sO_{B}$ so that the following diagram commutes:
    \begin{equation*}
\xymatrix{ F_{*}\sO_{X}\ar[r]^-{\sigma_X}\ar[d] & \sO_X\ar[d] \\
            F_{*}\sO_{B}\ar[r]^-{\sigma_B} \ar[r] & \sO_{B}.}
\end{equation*}
   Let $\tilde{X}(\sigma_X)$ and $\tilde{B}({\sigma_B})$ be the canonical liftings of $X$ and $B$ associated to the splitting sections $\sigma_X$ and $\sigma_B$. 
   Recalling the construction of canonical liftings (Section \ref{section:canonical lift}),
   we have the following commutative diagram:
   \begin{equation*}
\xymatrix{ \tilde{B_U}(\tilde{F}_{B_U})\coloneqq \tilde{B_U}(\sigma(\tilde{F}_{B_U}))\ar@{^{(}->}[r]\ar@{^{(}->}[d] & \tilde{B}(\sigma_B)\ar@{^{(}->}[d] \\
            \tilde{U}(\tilde{F}_U)\coloneqq \tilde{U}(\sigma(\tilde{F}_U)) \ar@{^{(}->}[r] & \tilde{X}(\sigma_X).}
\end{equation*}
   By Lemma \ref{lem:compatibility}, we have a lift  $\tilde{F}'_U\colon \tilde{U}(\tilde{F}_U)\to \tilde{U}(\tilde{F}_U)$ of $F_U$
   such that $(\tilde{F}'_U)^{*}\tilde{B_U}(\tilde{F}_{B_U})=p\tilde{B_U}(\tilde{F}_{B_U})$.
   Moreover, $\tilde{F}'_U$ extends to a lift $\tilde{F}_X\colon \tilde{X}(\sigma_X)\to \tilde{X}(\sigma_X)$ of $F_X$ by \cite[Theorem 3.3.6 (a)-(iii)]{AWZ} since $\tilde{U}(\tilde{F}_U)\hookrightarrow \tilde{X}(\sigma_X)$ is a lift of the inclusion $U\hookrightarrow X$. 
   Then we have
   \[
   \tilde{F}_X^{*}(\tilde{B}(\sigma_B)|_{\tilde{U}(\tilde{F}_U)})=(\tilde{F}'_U)^{*}\tilde{B}_U(\tilde{F}_{B_U})=p\tilde{B_U}(\tilde{F}_{B_U})=p(\tilde{B}(\sigma_B)|_{\tilde{U}(\tilde{F}_U)}).
   \]
   Thus, we conclude when $B$ is prime.
   In a general case, since the construction of $\tilde{X}$ and $\tilde{F}_X$ does not depend on $B$, applying the argument above to each component of $B$, we can conclude.
\end{proof}

\begin{cor}\label{cor:local=global for affine}
    Let $(X,B)$ be a pair of a normal variety $X$ and a reduced divisor $B$ such that $(X,B)$ is locally $F$-liftable.
    If $X$ is affine, then $(X,B)$ is $F$-liftable.
\end{cor}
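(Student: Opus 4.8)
The plan is to reduce the statement, via the equivalence (1)$\Leftrightarrow$(3) of Theorem \ref{thm:characterize F-lift via Cartier operators}, to a local-to-global splitting problem for a surjection of coherent sheaves on the affine variety $X$, and then to solve that problem using the vanishing of higher cohomology on affine schemes.

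First I would translate local $F$-liftability into a local splitting statement for the reflexive Cartier operator. For each closed point $P$ there is an open neighborhood $U_P$ with $(U_P, B|_{U_P})$ $F$-liftable, and since the formation of $Z_X^{[1]}(\log B)$, $\Omega_X^{[1]}(\log B)$, and $C_{X,B}^{[1]}$ commutes with restriction to open subschemes, Theorem \ref{thm:characterize F-lift via Cartier operators} applied to $(U_P, B|_{U_P})$ shows that $C_{X,B}^{[1]}|_{U_P}$ is a split surjection. In particular $C_{X,B}^{[1]}\colon Z_X^{[1]}(\log B)\to \Omega_X^{[1]}(\log B)$ is surjective as a map of sheaves, being locally a split surjection, so we may set $K\coloneqq \Ker C_{X,B}^{[1]}$, a coherent sheaf, and obtain a short exact sequence
\begin{equation*}
0\to K\to Z_X^{[1]}(\log B)\xrightarrow{C_{X,B}^{[1]}} \Omega_X^{[1]}(\log B)\to 0.
\end{equation*}

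Next I would analyze the obstruction to a global splitting. This sequence defines a class $\epsilon\in \mathrm{Ext}^1_X(\Omega_X^{[1]}(\log B), K)$, and $C_{X,B}^{[1]}$ admits a global section if and only if $\epsilon=0$. Since $X$ is affine, every quasi-coherent sheaf has vanishing higher cohomology, so the local-to-global $\mathrm{Ext}$ spectral sequence degenerates to an isomorphism $\mathrm{Ext}^1_X(\Omega_X^{[1]}(\log B), K)\cong H^0(X, \sExt^1_{\sO_X}(\Omega_X^{[1]}(\log B), K))$. Under this isomorphism the restriction of $\epsilon$ to $U_P$ corresponds to the restriction of the associated global section of $\sExt^1$. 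The local splittings from the previous paragraph say precisely that $\epsilon|_{U_P}=0$ for every $P$; hence the global section of the coherent sheaf $\sExt^1_{\sO_X}(\Omega_X^{[1]}(\log B), K)$ attached to $\epsilon$ vanishes in a neighborhood of every point, and is therefore the zero section. Consequently $\epsilon=0$, so $C_{X,B}^{[1]}$ is a globally split surjection, and $(X,B)$ is $F$-liftable by Theorem \ref{thm:characterize F-lift via Cartier operators}.

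The only point requiring genuine care — the main obstacle — is the passage from the local splittings to the vanishing of the global obstruction class. This rests on two standard but essential facts: that on the affine, finite type (hence Noetherian) scheme $X$ the group $\mathrm{Ext}^1_X$ coincides with global sections of the coherent sheaf $\sExt^1$, through the degeneration of the local-to-global spectral sequence using $H^{>0}(X,-)=0$ for quasi-coherent sheaves; and that a global section of a coherent sheaf which vanishes in a neighborhood of every point is itself zero. Once these are in place the argument is immediate, and everything else is routine bookkeeping with the local nature of the reflexive Cartier operator.
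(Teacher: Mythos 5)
Your proof is correct and follows essentially the same route as the paper: both reduce the statement, via Theorem \ref{thm:characterize F-lift via Cartier operators}, to showing that the locally split surjection $C^{[1]}_{X,B}$ splits globally on the affine variety $X$. The paper leaves the local-to-global splitting step implicit, whereas you justify it carefully with the obstruction class in $\mathrm{Ext}^1$ and the degeneration of the local-to-global spectral sequence; this is a valid (and standard) way to fill in exactly what the paper takes for granted.
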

\begin{proof}
    Since $(X,B)$ is locally $F$-liftable, the map $C^{[1]}_{X,B}\otimes \sO_{X,P}$ is a split surjection at every closed point $P$.
    Thus, $C^{[1]}_{X,B}$ splits globally if $X$ is affine.
\end{proof}

\begin{cor}\label{cor:F-lift to F-pure}
    Let $(X,B)$ be a pair of a normal variety $X$ and a reduced divisor $B$.
    If $(X,B)$ is locally $F$-liftable, then it is $F$-pure.
\end{cor}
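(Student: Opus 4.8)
The plan is to extract from local $F$-liftability a Frobenius splitting of $X$ that is compatible with $B$, and then to invoke the standard equivalence between compatible Frobenius splittings along a reduced divisor and $F$-purity of the associated pair.

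Since $F$-purity is checked stalk by stalk, I would first localize at a closed point $P$ and, after replacing $X$ by an affine neighborhood, assume by Corollary \ref{cor:local=global for affine} that $(X,B)$ is $F$-liftable. Now I would run the construction of Subsection \ref{subsection:splitting section associated to F-lifting} on the log smooth locus $U$ of $(X,B)$: a $W_2(k)$-lifting $(\tilde{U},\tilde{B_U},\tilde{F}_U)$ yields splitting sections $\sigma(\tilde{F}_U)\colon F_{*}\sO_U\to\sO_U$ and $\sigma(\tilde{F}_{B_U})$ fitting into the compatibility square displayed there, that is, a Frobenius splitting of $U$ compatible with $B_U$. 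Pushing forward along $j\colon U\hookrightarrow X$ exactly as in the proof of (2)$\Rightarrow$(1) of Theorem \ref{thm:characterize F-lift via Cartier operators} (using normality, so that $j_{*}$ preserves the reflexive sheaves and the splitting property), I obtain a Frobenius splitting $\sigma_X\colon F_{*}\sO_X\to\sO_X$ together with an induced $\sigma_B\colon F_{*}\sO_B\to\sO_B$ making the corresponding square commute. Commutativity says precisely that $\sigma_X(F_{*}\sO_X(-B))\subseteq\sO_X(-B)$, so $\sigma_X$ compatibly splits $B$.

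It then remains to deduce $F$-purity of $(X,B)$ in the sense of Definition \ref{def:F-pure} from the existence of $\sigma_X$. I would argue on the big open subscheme $U$, where $B$ is Cartier: writing $B_U=\{b=0\}$ locally, the composite $F_{*}\sO_U((p-1)B_U)\xrightarrow{\cdot\, b^{p-1}} F_{*}\sO_U\xrightarrow{\sigma_X}\sO_U$ splits the inclusion $\sO_U\hookrightarrow F_{*}\sO_U((p-1)B_U)$, the compatibility $\sigma_X(F_{*}\sO_U(-B_U))\subseteq\sO_U(-B_U)$ ensuring that the unit section maps to $1$. Reflexivity of the relevant sheaves ($S_2$-ness together with $U$ being a big open) extends this to a splitting of $\sO_X\to F_{*}\sO_X((p-1)B)$ over $X$, and iterating $\sigma_X$ with itself produces for every $e>0$ a splitting of $\sO_X\to F^{e}_{*}\sO_X((p^{e}-1)B)$, which is exactly the defining condition for $(X,B)$ to be $F$-pure.

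The step I expect to require the most care is the passage from a single $e=1$ compatible splitting to the twisted splittings $\sO_{X,P}\to F^{e}_{*}\sO_X((p^{e}-1)B)_{P}$ for all $e$, carried out while $B$ is only a Weil (possibly non-Cartier) divisor along the singular locus. The clean way around this is to perform every computation on $U$, where $B_U$ is a smooth Cartier divisor and the twisting-by-$b^{p-1}$ trick is literally valid, and to transport the result to $X$ by reflexive pushforward along $j$, precisely as is already done elsewhere in the paper; normality of $X$ is what legitimizes this. Alternatively, one may simply cite the standard equivalence ``compatible Frobenius splitting along reduced $B$ $\iff$ $F$-purity of $(X,B)$'' and conclude immediately from the splitting $\sigma_X$ constructed above.
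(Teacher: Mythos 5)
Your overall strategy is correct and genuinely different from the paper's argument. The paper stays entirely inside Theorem \ref{thm:characterize F-lift via Cartier operators}: from (1)$\Rightarrow$(2) it extracts a splitting of the \emph{top} reflexive Cartier operator $C^{[d]}_{X,B}\colon Z^{[d]}_X(\log B)=F_*\omega_X(B)\to \omega_X(B)$ (in top degree every form is closed), and then applies $\sHom_{\sO_X}(-,\omega_X(B))$ to turn this in one stroke into a split injection $\sO_X\to F_*\sO_X((p-1)B)$ at $P$, i.e.\ the condition of Definition \ref{def:F-pure} (for $e=1$; all $e$ follow by the usual iteration, a step both proofs leave implicit). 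You instead manufacture an honest Frobenius splitting $\sigma_X$ of $X$ compatible with $B$ --- via the splitting sections of Subsection \ref{subsection:splitting section associated to F-lifting} on the log smooth locus and reflexive pushforward, which is literally the first half of the paper's proof of (2)$\Rightarrow$(1) of Theorem \ref{thm:characterize F-lift via Cartier operators} --- and then invoke the dictionary between compatible splittings and $F$-purity of pairs. Both routes work; the paper's is shorter because the duality is done once in top degree, while yours avoids Theorem \ref{thm:characterize F-lift via Cartier operators} almost entirely (your appeal to Corollary \ref{cor:local=global for affine} is in fact superfluous: by definition, $F$-liftability at $P$ already provides an $F$-liftable open neighborhood) at the price of an extra standard input.

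There is, however, a genuine error in your explicit verification of that input. The composite $F_*\sO_U((p-1)B_U)\xrightarrow{\cdot\, b^{p-1}}F_*\sO_U\xrightarrow{\sigma_X}\sO_U$ sends the unit section to $\sigma_X(F_*b^{p-1})$, and compatibility does \emph{not} force this to be $1$: for the standard splitting of $\mathbb{A}^1=\Spec k[x]$ (which is compatible with the origin), one has $\sigma(F_*x^{p-1})=0$ since $p\nmid p-1$. The correct deduction uses $p^{-1}$-linearity instead: extend $\sigma_X$ to the function field by $\sigma_X(F_*(g/h^p))\coloneqq\sigma_X(F_*g)/h$; then for $s\in\sO_U((p-1)B_U)$ one has $b^ps=b\cdot(b^{p-1}s)\in b\sO_U$, so compatibility gives $b\,\sigma_X(F_*s)=\sigma_X(F_*(b^ps))\in b\sO_U$, i.e.\ $\sigma_X(F_*s)\in\sO_U$. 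Thus the extension of $\sigma_X$ itself restricts to a map $F_*\sO_U((p-1)B_U)\to\sO_U$ with $F_*1\mapsto 1$; equivalently, the splitting is $s\mapsto\sigma_X(F_*(b^{p}s))/b$, not $s\mapsto\sigma_X(F_*(b^{p-1}s))$. Since you also offer the fallback of citing the standard equivalence, your proposal as a whole reaches a valid proof, but the ``twisting-by-$b^{p-1}$ trick'' as written would fail.
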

\begin{proof}
    We fix a closed point $P\in X$.
    By Theorem \ref{thm:characterize F-lift via Cartier operators} (1)$\Rightarrow$(2), the map \[
    C_{X,B}^{[d]}\colon F_{*}\omega_X(B) \to \omega_X(B)
    \]
    is a split surjection at $P\in X$.
    Taking $\sHom_{\sO_X}(-,\omega_X(B))$, we have a map
    \[
    \sO_X  \to F_{*}\sO_X((p-1)B),
    \]
    which is a split injection at $P$. Thus, $(X,B)$ is $F$-pure at $P$.
\end{proof}

\begin{rem}
    We say a singularity $(P\in X,B)$ is $F$-liftable if some representative $(P\in X,B)$ is $F$-liftable at $P$.
    Since $C_{X,B}^{[1]}\otimes_{\sO_X} \sO_{X,P}$ splits if and only if so does $C_{X,B}^{[1]}\otimes_{\sO_X} \sO_{X,P}^{\wedge}$, the definition is equivalent to saying that every representative $(P\in X,B)$ is $F$-liftable at $P$.
\end{rem}

\subsection{Some descent properties}\label{sec:descent}

As one of the advantages of the characterization of $F$-liftability via reflexive Cartier operators, we obtain some descent properties.

\begin{prop}[Birational descent]\label{lem:birational F-lift descend}
    Let $f\colon Y\to X$ be a proper birational morphism of normal varieties.
    Let $B_Y$ be a reduced divisor on $Y$ and $B\coloneqq f(B_Y)$.
    If $(Y,B_Y)$ is $F$-liftable, then so is $(X,B)$.
\end{prop}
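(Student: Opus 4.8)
The plan is to use the characterization of Theorem~\ref{thm:characterize F-lift via Cartier operators}: it suffices to prove that the reflexive Cartier operator $C^{[1]}_{X,B}\colon Z^{[1]}_X(\log B)\to \Omega^{[1]}_X(\log B)$ is a split surjection. The geometric input I would exploit is that, since $X$ is normal and $f$ is proper birational, $f$ is an isomorphism over an open subset $V\subseteq X$ whose complement has codimension at least two (a standard consequence of Zariski's main theorem, using $f_{*}\sO_Y=\sO_X$). Writing $W\coloneqq f^{-1}(V)$, the induced map $f|_W\colon W\xrightarrow{\sim} V$ is an isomorphism, and under it the divisor $B_Y|_W$ corresponds to $B|_V$, because every component of $B_Y$ contracted by $f$ is $f$-exceptional and hence lies over $X\setminus V$. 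In particular $B=f(B_Y)$ is a genuine reduced divisor whose restriction to $V$ matches $B_Y|_W$.

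First I would transport the splitting to the big open $V$, working entirely through the Cartier operator. By Theorem~\ref{thm:characterize F-lift via Cartier operators}, the $F$-liftability of $(Y,B_Y)$ provides a splitting of $C^{[1]}_{Y,B_Y}$. Restricting it to the open $W$ yields a splitting of $C^{[1]}_{Y,B_Y}|_W=C^{[1]}_{W,B_Y|_W}$, and transporting along the isomorphism of pairs $(W,B_Y|_W)\cong(V,B|_V)$ produces a splitting $s_V\colon \Omega^{[1]}_V(\log B|_V)\to Z^{[1]}_V(\log B|_V)$ of $C^{[1]}_{V,B|_V}$. Because the formation of $Z^{[1]}$, of $\Omega^{[1]}$, and of the reflexive Cartier operator commutes with restriction to the open $V$ (the log smooth locus of $(V,B|_V)$ is the intersection of $V$ with that of $(X,B)$), there are canonical identifications $Z^{[1]}_X(\log B)|_V=Z^{[1]}_V(\log B|_V)$, $\Omega^{[1]}_X(\log B)|_V=\Omega^{[1]}_V(\log B|_V)$, and $C^{[1]}_{X,B}|_V=C^{[1]}_{V,B|_V}$. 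Thus $s_V$ is a splitting of $C^{[1]}_{X,B}|_V$.

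Next I would extend $s_V$ across the codimension-two complement by reflexivity. Let $\iota\colon V\hookrightarrow X$ be the inclusion. Since $Z^{[1]}_X(\log B)$ and $\Omega^{[1]}_X(\log B)$ are reflexive and $X\setminus V$ has codimension at least two, we have $\iota_{*}(Z^{[1]}_X(\log B)|_V)=Z^{[1]}_X(\log B)$ and $\iota_{*}(\Omega^{[1]}_X(\log B)|_V)=\Omega^{[1]}_X(\log B)$. Applying $\iota_{*}$ to $s_V$ therefore yields a morphism $s\coloneqq \iota_{*}s_V\colon \Omega^{[1]}_X(\log B)\to Z^{[1]}_X(\log B)$. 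The composite $C^{[1]}_{X,B}\circ s$ agrees with the identity on $V$, and since $\Omega^{[1]}_X(\log B)$ is torsion-free a morphism into it that vanishes on the dense open $V$ is zero; hence $C^{[1]}_{X,B}\circ s=\mathrm{id}$. Thus $C^{[1]}_{X,B}$ is a split surjection, and Theorem~\ref{thm:characterize F-lift via Cartier operators} gives that $(X,B)$ is $F$-liftable.

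I expect the only delicate points to be the two codimension facts: that a proper birational morphism to a normal variety is an isomorphism over a big open subset, and that restricting a reflexive sheaf to such a subset and pushing forward recovers it. Both are standard, and once they are granted the argument reduces to the observation that the splitting produced by Theorem~\ref{thm:characterize F-lift via Cartier operators} is detected on big open subsets, so $F$-liftability only needs to be verified where $f$ is an isomorphism. The main conceptual step is thus the reformulation via the reflexive Cartier operator, which converts a statement about lifting Frobenius into a statement about splitting a map of reflexive sheaves that is insensitive to loci of codimension at least two.
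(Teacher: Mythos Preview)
Your proof is correct and follows essentially the same approach as the paper: both use Theorem~\ref{thm:characterize F-lift via Cartier operators} to reduce to showing that a splitting of the reflexive Cartier operator on $Y$ induces one on $X$, which holds by reflexivity since $f$ is an isomorphism over a big open subset. The paper phrases this step more tersely as ``taking the pushforward by $f$ and taking double dual,'' which amounts to exactly your restriction-and-extension argument.
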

\begin{proof}
    Since $(Y,B_Y)$ is $F$-liftable, the reflexive Cartier operator
    \[
    C_{Y,B_Y}^{[1]}\colon Z_Y^{[1]}(\log\,B_Y) \to \Omega_Y^{[1]}(\log\,B_Y)
    \]
    splits by Theorem \ref{thm:characterize F-lift via Cartier operators} (1)$\Rightarrow$ (3).
    Taking the pushforward by $f$ and taking double dual, we obtain a splitting of the reflexive Cartier operator
    \[
    C_{X,B}^{[1]}\colon Z_X^{[1]}(\log\,B) \to \Omega_X^{[1]}(\log\,B).
     \]
    Thus, $(X,B)$ is $F$-liftable by Theorem \ref{thm:characterize F-lift via Cartier operators} (3)$\Rightarrow$ (1).
\end{proof}

\begin{lem}\label{lem:split for BZOmega}
    Let $g\colon X'\to X$ be a finite morphism of normal varieties of degree prime to $p$.
    Let $B$ a reduced divisor on $X$ and $B'$ be the sum of components of $g^{-1}(B)$ along which the ramification indices are prime to $p$.
    Then the natural pullback maps
    \begin{equation}
        g^{*}\colon\Omega_X^{[i]}(\log\,B) \to g_{*}\Omega_{X'}^{[i]}(\log\,B')\label{split:Omega}
    \end{equation}
    \begin{equation}
        g^{*}\colon Z_X^{[i]}(\log\,B) \to g_{*}Z_{X'}^{[i]}(\log\,B')\label{split:Z}
    \end{equation}
    \begin{equation}
        g^{*}\colon B_X^{[i]}(\log\,B) \to g_{*}B_{X'}^{[i]}(\log\,B')\label{split:B}
    \end{equation}
    split for all $i\geq 0$.
\end{lem}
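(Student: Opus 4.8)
Set $d\coloneqq \deg g$, which is invertible in $k$ since $p\nmid d$; in particular $g$ is separable, as the inseparable degree of a finite extension of fields in characteristic $p$ is a power of $p$. The plan is to produce a single splitting, namely $\tfrac1d$ times a trace map, that simultaneously splits all three pullbacks. First I would reduce to a big open subscheme of $X$. The sources in \eqref{split:Omega}--\eqref{split:B} are reflexive by Definition \ref{def:reflexive Carter operators}, and since $g$ is finite and $X'$ is normal, the pushforwards $g_{*}\Omega_{X'}^{[i]}(\log B')$, $g_{*}Z_{X'}^{[i]}(\log B')$ and $g_{*}B_{X'}^{[i]}(\log B')$ are again reflexive on $X$. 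Hence any splitting constructed over a big open $V\subseteq X$ extends uniquely, and I may choose $V$ so that $V$ and $V'\coloneqq g^{-1}(V)$ are smooth, $B|_V$ and $B'|_{V'}$ are snc, and $g|_{V'}$ is finite flat; such a $V$ is big because $g$ is finite and $X,X'$ are normal.

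Because $g$ is separable, $\Omega_{X'/X}$ vanishes generically, and the field trace induces a morphism of reflexive sheaves $\mathrm{Tr}\colon g_{*}\Omega_{X'}^{[i]}\to \Omega_X^{[i]}$ (regular forms being carried to regular forms, as $g$ is generically \'etale) satisfying $\mathrm{Tr}\circ g^{*}=d\cdot\mathrm{id}$, using $\mathrm{Tr}_{K(X')/K(X)}(1)=d$. Viewing $\Omega_X^{[i]}(\log B)$ and $\Omega_{X'}^{[i]}(\log B')$ inside the rational de Rham forms, the only point to check is that $\mathrm{Tr}$ carries $g_{*}\Omega_{X'}^{[i]}(\log B')$ into $\Omega_X^{[i]}(\log B)$. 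As both are reflexive this is a codimension-one statement, to be verified at the generic point of each component $B_0$ of $B$. By Abhyankar's lemma, \'etale-locally near such a point the tame part of $g^{-1}(B_0)$ is a Kummer cover $t_j\mapsto t_j^{e_j}=s_j$ with $p\nmid e_j$, where $g^{*}(ds_j/s_j)=e_j\,dt_j/t_j$; the wildly ramified components of $g^{-1}(B_0)$, which are excluded from $B'$, produce no log pole on the $X'$-side since $e_j\,dt_j/t_j=0$ when $p\mid e_j$. A direct computation on such covers then shows that the trace of a $\log B'$-form is a $\log B$-form. Dividing by $d$ yields the splitting of \eqref{split:Omega}.

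It remains to promote this to \eqref{split:Z} and \eqref{split:B}. Since the absolute Frobenius is natural we have $g\circ F_{X'}=F_X\circ g$, whence $F_{X,*}g_{*}=g_{*}F_{X',*}$, so $\mathrm{Tr}$ is compatible with the Frobenius pushforwards built into $Z^{i}$ and $B^{i}$. Moreover, for a separable morphism the trace is a map of de Rham complexes, so it commutes with the differential $F_{*}d$; therefore it sends closed forms to closed forms and exact forms to exact forms, and restricts to maps $g_{*}Z_{X'}^{[i]}(\log B')\to Z_X^{[i]}(\log B)$ and $g_{*}B_{X'}^{[i]}(\log B')\to B_X^{[i]}(\log B)$. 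Dividing by $d$ then splits \eqref{split:Z} and \eqref{split:B} as well. I expect the main obstacle to be the codimension-one verification in the second paragraph: one must show that the wildly ramified components of $g^{-1}(B)$---which are dropped from $B'$---neither force extra poles nor destroy the required log poles along $B$, and this is precisely where the prime-to-$p$ hypothesis on $\deg g$ (making $d$ invertible and the relevant ramification tame) and the explicit Kummer computation enter.
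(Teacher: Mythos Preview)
Your approach is correct and is essentially the same as the paper's: both rely on the normalised trace $\tfrac{1}{d}\mathrm{Tr}$ to split \eqref{split:Omega} (the paper outsources this to \cite[Lemma~2.1]{Totaro(logBott)}, which is exactly your codimension-one Kummer computation), and then pass to $Z^{[i]}$ and $B^{[i]}$ using that this splitting is compatible with $F_*d$. The only cosmetic difference is that the paper phrases the last step as a formal diagram chase (splittings on two columns induce one on the kernel/cokernel column), whereas you say directly that the trace commutes with the de~Rham differential; your formulation in fact makes explicit the compatibility that the paper's diagram argument tacitly needs.
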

\begin{proof}
    A splitting of \eqref{split:Omega} is proved in \cite[Lemma 2.1]{Totaro(logBott)}.
    We first prove a splitting of \eqref{split:Z}.
    The map $g^{*}\colon Z_X^{[i]}(\log\,B) \to g_{*}Z_{X'}^{[i]}(\log\,B')$ is induced as follows:
    \begin{equation*}
\xymatrix{ 0\ar[r] &Z_X^{[i]}(\log\,B)\ar[r]^-{F_{*}d}\ar@{.>}[d]^-{g^{*}}& F_{*}\Omega_X^{[i]}(\log\,B)\ar[r]^-{F_{*}d}\ar[d]^-{F_{*}g^{*}}& F_{*}\Omega^{[i+1]}_X(\log\,B)\ar[d]^-{F_{*}g^{*}} \\
               0\ar[r] & g_{*}Z^{[i]}_{X'}(\log\,B')\ar[r]^-{F_{*}d} &g_{*}F_{*}\Omega^{[i]}_{X'}(\log\,B')\ar[r]^-{F_{*}d}   & g_{*}F_{*}\Omega^{[i+1]}_{X'}(\log\,B').}
\end{equation*}
Then it is easy to confirm that a splitting of the middle and the right vertical maps induce the splitting of $\eqref{split:Z}$.

    Similarly, the map $g^{*}\colon B_X^{[i]}(\log\,B) \to g_{*}B_{X'}^{[i]}(\log\,B')$ is induced as follows:
    \begin{equation*}
\xymatrix{ Z_X^{[i-1]}(\log\,B)\ar[r]^-{F_{*}d}\ar[d]^-{g^{*}}& F_{*}\Omega_X^{[i-1]}(\log\,B)\ar[r]^-{F_{*}d}\ar[d]^-{F_{*}g^{*}}& B_X^{[i]}(\log\,B)\ar@{.>}[d]^-{g^{*}}\ar[r]& 0 \\
                g_{*}Z^{[i-1]}_{X'}(\log\,B')\ar[r]^-{F_{*}d} &g_{*}F_{*}\Omega^{[i-1]}_{X'}(\log\,B')\ar[r]^-{F_{*}d}   & g_{*}B^{[i]}_{X'}(\log\,B')\ar[r] &0.}
\end{equation*}
Then we can confirm that a splitting of the left and the middle vertical maps induce a splitting of \eqref{split:B}.
\end{proof}

\begin{thm}[Finite descent]\label{thm:finite descent}
    Let $g\colon X'\to X$ be a finite morphism of normal varieties of degree prime to $p$.
    Let $B$ be a reduced divisor on $X$ and $B'$ be the sum of components of $g^{-1}(B)$ along which the ramification index prime to $p$.
    If $(X',B')$ is $F$-liftable, then so is $(X,B)$.
\end{thm}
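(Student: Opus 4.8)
The plan is to use the characterization of Theorem \ref{thm:characterize F-lift via Cartier operators}, which reduces $F$-liftability to the splitting of the first reflexive Cartier operator, and then to transfer the splitting from $X'$ down to $X$ by means of the split pullback maps supplied by Lemma \ref{lem:split for BZOmega}. Since $(X',B')$ is $F$-liftable, Theorem \ref{thm:characterize F-lift via Cartier operators} (1)$\Rightarrow$(3) shows that the reflexive Cartier sequence
\[
E_{X'}\colon\quad 0\to B^{[1]}_{X'}(\log B')\to Z^{[1]}_{X'}(\log B')\xrightarrow{C^{[1]}_{X',B'}}\Omega^{[1]}_{X'}(\log B')\to 0
\]
splits. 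As $g$ is finite, hence affine, the functor $g_{*}$ is exact and carries split injections to split injections, so $g_{*}E_{X'}$ is again a split short exact sequence of $\sO_X$-modules.

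Next I would organize the three pullback maps of Lemma \ref{lem:split for BZOmega} into a morphism of short exact sequences from the reflexive Cartier sequence $E_X$ of $(X,B)$ to $g_{*}E_{X'}$. The vertical maps are $g^{*}$ on $B^{[1]}$, $Z^{[1]}$, and $\Omega^{[1]}$; they commute with the inclusions $B^{[1]}\hookrightarrow Z^{[1]}$ tautologically, and with the Cartier operators by naturality of the Cartier isomorphism. This naturality holds on the big open locus where $(X,B)$ is log smooth and $g$ is tamely ramified --- this is precisely the compatibility underlying the construction of the splittings in Lemma \ref{lem:split for BZOmega} --- and extends to $X$ by reflexivity. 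By that lemma all three vertical maps $g^{*}_{B}$, $g^{*}_{Z}$, $g^{*}_{\Omega}$ are split injections.

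Finally I would run an extension-class argument. Write $[E_X]\in\operatorname{Ext}^1_{\sO_X}(\Omega^{[1]}_X(\log B),\,B^{[1]}_X(\log B))$ for the class of the top sequence. The morphism of short exact sequences yields the identity $(g^{*}_{B})_{*}[E_X]=(g^{*}_{\Omega})^{*}[g_{*}E_{X'}]$ in $\operatorname{Ext}^1_{\sO_X}(\Omega^{[1]}_X(\log B),\,g_{*}B^{[1]}_{X'}(\log B'))$, and $[g_{*}E_{X'}]=0$ since $g_{*}E_{X'}$ splits. Hence $(g^{*}_{B})_{*}[E_X]=0$. Because $g^{*}_{B}$ is a split injection, with retraction $\rho_{B}$ say, the induced map $(g^{*}_{B})_{*}$ on $\operatorname{Ext}^1$ is injective, as $(\rho_{B})_{*}\circ(g^{*}_{B})_{*}=\operatorname{id}$. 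Therefore $[E_X]=0$, the sequence $E_X$ splits, $C^{[1]}_{X,B}$ is a split surjection, and $(X,B)$ is $F$-liftable by Theorem \ref{thm:characterize F-lift via Cartier operators} (3)$\Rightarrow$(1).

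The step I expect to require the most care is the existence of the morphism of short exact sequences, i.e.\ the compatibility of $g^{*}$ with the reflexive Cartier operators. This is local on the tame log-smooth locus, where one must check that $g^{*}$ respects $F_{*}d$ and sends the relevant log-differentials into those with respect to $B'$; here the defining property of $B'$ (dropping exactly the components of $g^{-1}(B)$ with ramification index divisible by $p$) is what makes $g^{*}(dz/z)$ a genuine $B'$-log form. A cleaner but equivalent alternative avoids the Ext language: setting $s:=\rho_{Z}\circ(g_{*}s')\circ g^{*}_{\Omega}$, where $s'$ splits $C^{[1]}_{X',B'}$ and $\rho_{Z}$ is the retraction of $g^{*}_{Z}$, one checks $C^{[1]}_{X,B}\circ s=\operatorname{id}$ using the same naturality together with the compatibility $C^{[1]}_{X,B}\circ\rho_{Z}=\rho_{\Omega}\circ g_{*}C^{[1]}_{X',B'}$ of the retractions with the Cartier operators.
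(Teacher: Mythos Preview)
Your proposal is correct and takes essentially the same route as the paper: reduce to the splitting of $C^{[1]}_{X,B}$ via Theorem~\ref{thm:characterize F-lift via Cartier operators}, then transfer the splitting down using the split pullbacks of Lemma~\ref{lem:split for BZOmega}. Your ``cleaner alternative'' $s=\rho_Z\circ(g_*s')\circ g^*_\Omega$ is exactly the paper's construction (their $\psi$ is your $\rho_Z$, their $\phi$ your $s'$).

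One small caution about your primary Ext argument: writing $[E_X]\in\operatorname{Ext}^1_{\sO_X}(\Omega^{[1]}_X(\log B),B^{[1]}_X(\log B))$ presupposes that $E_X$ is a short exact sequence, i.e.\ that $C^{[1]}_{X,B}$ is surjective, but this is not given a priori since $j_*$ from the log-smooth locus is only left exact. The direct construction sidesteps this, as it yields split surjectivity (hence surjectivity) in one stroke. Also, the compatibility $C^{[1]}_{X,B}\circ\rho_Z=\rho_\Omega\circ g_*C^{[1]}_{X',B'}$ you invoke is a property of the \emph{specific} retractions built in Lemma~\ref{lem:split for BZOmega} (trace-type maps commuting with $d$ and with the Cartier isomorphism on the tame locus), not of arbitrary retractions; the paper relies on this implicitly as well.
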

\begin{rem}
    If $g$ is a quotient by a finite group scheme of degree prime to $p$, then the assumption about the ramification index is satisfied.
\end{rem}
\begin{proof}
    Suppose that $(X',B')$ is $F$-liftable.
    Then we have a map 
    \[
    \phi\colon \Omega_{X'}^{[i]}(\log\,B') \to Z_{X'}^{[i]}(\log\,B'),
    \]
    which gives a split of $C_{X',B'}^{[i]}$.
    By Lemma \ref{lem:split for BZOmega},
    we have a map
    \[
    \psi\colon g_{*}Z_{X'}^{[i]}(\log\,B') \to Z_{X}^{[i]}(\log\,B),
    \]
    which gives a split of the pullback map 
    \[
    g^{*}\colon Z_{X}^{[i]}(\log\,B)\to g_{*}Z_{X'}^{[i]}(\log\,B').
    \]
    Then the composition of 
    \[
    \Omega_X^{[i]}(\log\,B)\xrightarrow{g^{*}} g_{*}\Omega_{X'}^{[i]}(\log\,B')\xrightarrow{g_{*}\phi} g_{*}Z_{X'}^{[i]}(\log\,B')\xrightarrow{\psi}Z_{X}^{[i]}(\log\,B) ,
    \]
    gives a splitting of 
    \[
    C^{[i]}_{X,B}\colon Z_{X}^{[i]}(\log\,B)\to \Omega_X^{[i]}(\log\,B),
    \]
    as desired.
\end{proof}

Let $g\colon X'\to X$ be a finite morphism of normal varieties of degree prime to $p$.
Achinger-Witaszek-Zdanowicz \cite[Theorem 3.3.6 (a)-(ii)]{AWZ} proved the descent of $F$-liftability when $g$ lifts to $W_2(k)$.
Taking $B=0$ in Theorem \ref{thm:finite descent}, we can prove the descent of $F$-liftability without assuming the liftability of $g$. 

\begin{cor}\label{cor:finite descent}
    Let $g\colon X'\to X$ be a finite morphism of normal varieties of degree prime to $p$.
    If $X'$ is $F$-liftable, then so is $X$.
\end{cor}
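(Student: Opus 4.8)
The plan is to deduce this corollary as the special case $B=0$ of the Finite descent theorem (Theorem \ref{thm:finite descent}), which has already been established. The only work is to verify that specializing the boundary to zero reproduces exactly the statement of the corollary, so the entire content sits in Theorem \ref{thm:finite descent} and the argument is a short reading-off.

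First I would set $B=0$ in Theorem \ref{thm:finite descent}. Then $g^{-1}(B)$ is the empty divisor, so the associated divisor $B'$ — defined as the sum of the components of $g^{-1}(B)$ along which the ramification index is prime to $p$ — is likewise zero. Hence the hypothesis of the theorem becomes ``$(X',0)$ is $F$-liftable'' and its conclusion becomes ``$(X,0)$ is $F$-liftable.'' The only point here requiring a moment's care is confirming that $B=0$ forces $B'=0$, which is immediate since every component of $B'$ is by construction a component of $g^{-1}(B)=0$.

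Finally I would invoke the convention (Subsection \ref{subsection:Notation and terminology}(3), or equivalently Definition \ref{def:F-lift} read with an empty collection of boundary components) identifying the pair $(X,0)$ with the variety $X$: $F$-liftability of $(X,0)$ is precisely $F$-liftability of $X$ as a variety, since with $B=0$ the liftings $\widetilde{B}_r$ and the compatibility conditions $\widetilde{F}^{*}(\widetilde{B}_r)=p\widetilde{B}_r$ are vacuous. Under this identification the hypothesis reads ``$X'$ is $F$-liftable'' and the conclusion reads ``$X$ is $F$-liftable,'' which is exactly the corollary. There is essentially no obstacle to overcome; I would expect the proof to consist of a single sentence applying Theorem \ref{thm:finite descent} with $B=0$.
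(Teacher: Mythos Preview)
Your proposal is correct and matches the paper's own proof, which is the single sentence ``Taking $B=0$ in Theorem \ref{thm:finite descent}, we obtain the assertion.'' The additional unpacking you provide (that $B=0$ forces $B'=0$ and that $F$-liftability of $(X,0)$ is $F$-liftability of $X$) is accurate and makes explicit what the paper leaves implicit.
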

\begin{proof}
    Taking $B=0$ in Theorem \ref{thm:finite descent}, we obtain the assertion.
\end{proof}


\section{Local \texorpdfstring{$F$}--liftability in dimension two}
In this section, we prove Theorem \ref{Introthm:F-purity and F-liftability}.

\subsection{\texorpdfstring{$F$}--liftability of RDPs}\label{subsec:F-lift for RDPs}

The main tool is Zdanowicz's criterion \cite[Corollary 4.12]{Zda18} for $F$-liftability of hypersurfaces and his program \cite[Section 8.2]{Zda18} of Macaulay 2. On the other hand, when we deal with RDPs of type $D_{2n}^{n-1}$ and $D_{2n+1}^{n-1}$, it is not easy to verify their $F$-liftability in this way; for a fixed $n$, we can check the $F$-liftability by the program, but $n$ can be arbitrarily large. 
To deal with this problem, we prove Lemma \ref{lem:F-lift criterion}, which enables us find elements which are necessary in his criterion for all $n$ simultaneously.

In this subsection, we use the following notation.
\begin{conv}
\label{conv:delta}\,
\begin{itemize}
    \item $R\coloneqq k[x_1,\ldots, x_n]$.
    \item $I_f\coloneqq (\frac{\partial f}{\partial x_1}, \ldots, \frac{\partial f}{\partial x_n}) \subset R$  for $f\in R$.
    \item $\Delta_1(f)=\sum_{\substack{0 \leq \alpha_1, \ldots,\alpha_m \leq p-1 \\ \alpha_1+\cdots+\alpha_m=p}} \frac{1}{p} \binom{p}{\alpha_1, \ldots ,\alpha_m}(M_1)^{\alpha_1} \cdots (M_m)^{\alpha_m}$, where $f \in R$ and $f=\sum^m_{i=1} M_i$ is the monomial decomposition.
\end{itemize}
 \end{conv}

\begin{thm}[\textup{\cite[Corollary 4.12]{Zda18}}]\label{thm:Zdanowicz Critierion}
    A spectrum of $R/f$ is $F$-liftable if and only if there exists $g\in R$ such that 
    \[
    \Delta_1(f)+g^p\in (f, I_f^{[p]}).
    \]
\end{thm}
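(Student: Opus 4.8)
The plan is to translate the existence of a Frobenius lift into the stated congruence by an obstruction-theoretic computation on the ambient affine space, with $\Delta_1(f)$ appearing as the ``error term'' of the naive Frobenius $x_i\mapsto x_i^p$. The starting point is the purely combinatorial identity, valid over $\Z$ and hence over $W_2(k)$: writing $f=\sum_{i=1}^m M_i$ as a sum of monomials, the multinomial theorem gives
\[
f^p=\sum_{i=1}^m M_i^p+p\,\Delta_1(f).
\]
Since substituting $x_j\mapsto x_j^p$ into a monomial raises it to the $p$-th power, we have $\sum_i M_i^p=f(x_1^p,\dots,x_n^p)$, so for any lift $\tilde f\in W_2(k)[x_1,\dots,x_n]$ of $f$ one obtains $\tilde f(x_1^p,\dots,x_n^p)=\tilde f^{\,p}-p\,\Delta_1(\tilde f)$ modulo $p^2$. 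This is the source of every occurrence of $\Delta_1$ below.

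First I would set up the deformation data. Since $\Spec R/f$ is a hypersurface, every flat $W_2(k)$-lift is again a hypersurface $\tilde A_c\coloneqq \tilde R/(\tilde f_0+pc)$, for a fixed lift $\tilde f_0$ of $f$ and some $c\in R$; this uses that the free resolution $0\to R\xrightarrow{f}R\to R/f\to 0$ lifts flatly, so each lift is cut out by a single equation, well defined up to $pR$. A Frobenius lift on $\tilde A_c$ is a $W_2(k)$-algebra endomorphism reducing to $F$, hence is determined by the images $\tilde F(x_i)$, each a lift of $x_i^p$; write $\tilde F(x_i)=x_i^p+p h_i$ with $h_i\in R$ representing an arbitrary class in $A\coloneqq R/f$. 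The only constraint is that $\tilde F$ be well defined, i.e.\ that $(\tilde f_0+pc)\bigl(\tilde F(x_1),\dots,\tilde F(x_n)\bigr)=0$ in $\tilde A_c$.

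Next I would carry out the computation modulo $p^2$. A first-order Taylor expansion gives
\[
(\tilde f_0+pc)(x_1^p+p\tilde h_1,\dots,x_n^p+p\tilde h_n)\equiv (\tilde f_0+pc)(x_1^p,\dots,x_n^p)+p\sum_{i=1}^n \tfrac{\partial f}{\partial x_i}(x_1^p,\dots,x_n^p)\,h_i \pmod{p^2},
\]
and using the identity above together with $\tilde f_0\equiv -pc$ in $\tilde A_c$ (so that $\tilde f_0^{\,p}=0$ there) and $\tfrac{\partial f}{\partial x_i}(x_1^p,\dots,x_n^p)=\bigl(\tfrac{\partial f}{\partial x_i}\bigr)^p$ in characteristic $p$, the whole expression reduces to $p\bigl(c^p-\Delta_1(f)+\sum_i (\partial f/\partial x_i)^p h_i\bigr)$. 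Because $\tilde A_c$ is $W_2(k)$-flat, multiplication by $p$ identifies $p\tilde A_c$ with $A$, so the vanishing in $\tilde A_c$ is equivalent to
\[
\Delta_1(f)-c^p-\sum_{i=1}^n\Bigl(\tfrac{\partial f}{\partial x_i}\Bigr)^p h_i\in (f)
\]
in $R$. As $c$ and the $h_i$ range over $R$, the term $c^p$ sweeps out all $p$-th powers and $\sum_i (\partial f/\partial x_i)^p h_i$ sweeps out $I_f^{[p]}$; replacing $c$ by $-g$ (harmless, since $p$-th powers are stable under sign), this system is solvable if and only if $\Delta_1(f)+g^p\in (f,I_f^{[p]})$ for some $g\in R$, which is the claim.

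The main obstacle I expect is the deformation-theoretic bookkeeping in the second step: verifying that every abstract $W_2(k)$-lift of the (possibly singular) hypersurface is of the form $\tilde A_c$ with equation a lift of $f$, and that an arbitrary Frobenius lift on it is captured by the parametrization $\tilde F(x_i)=x_i^p+ph_i$, with the $c$-ambiguity in the lift of $f$ accounting precisely for the $g^p$ term. By contrast, the Taylor computation itself is mechanical once the flatness fact $(\tilde f_0+pc)\cap p\tilde R=p(\tilde f_0+pc)$ is in hand.
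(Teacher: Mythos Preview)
The paper does not prove this theorem: it is quoted verbatim as \cite[Corollary 4.12]{Zda18} and used as a black box, so there is no ``paper's own proof'' to compare against. Your sketch is essentially Zdanowicz's original obstruction computation --- parametrize lifts of the hypersurface by $c$, parametrize Frobenius lifts by the $h_i$, Taylor-expand $(\tilde f_0+pc)(x^p+ph)$ modulo $p^2$, and read off the condition in $A=R/f$ via $p\tilde A_c\cong A$.

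One genuine inaccuracy to flag: the assertion ``substituting $x_j\mapsto x_j^p$ into a monomial raises it to the $p$-th power, so $\sum_i M_i^p=f(x_1^p,\dots,x_n^p)$'' is false unless the coefficients of $f$ lie in $\FF_p$. For $M=cx^{\alpha}$ one has $M(x^p)=cx^{p\alpha}$ while $M^p=c^p x^{p\alpha}$. The same issue recurs when you replace $c(x^p)$ by $c^p$ and $(\partial f/\partial x_i)(x^p)$ by $(\partial f/\partial x_i)^p$. The fix is to remember that a lift of the \emph{absolute} Frobenius is $\sigma$-semilinear over the Witt vector Frobenius $\sigma$ on $W_2(k)$; then $\tilde F(\tilde f_0)=\tilde f_0^{\sigma}(x^p+ph)$, and choosing $\tilde f_0$ to be the Teichm\"uller lift of $f$ makes $\tilde f_0^{\sigma}(x^p)\equiv \tilde f_0^{\,p}-p\Delta_1(f)\pmod{p^2}$ exactly as you want (since $\sigma([a])=[a^p]=[a]^p$). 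After this correction the reductions $c^{\sigma}(x^p)\equiv c^p$ and $(\partial f/\partial x_i)^{\sigma}(x^p)\equiv(\partial f/\partial x_i)^p$ hold modulo $p$, and your displayed criterion follows. For the applications in the present paper this subtlety is invisible anyway, since all RDP equations have coefficients in $\FF_p$.
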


\begin{defn}
    We say that $\sigma\in \Hom_R(F_{*}R,R)$ is a splitting section compatible with an ideal $(f)$ if 
    $\sigma(1)=1$ and $\sigma(F_{*}(f)) \subset (f)$.
\end{defn}

The following lemma was taught to authors by Shou Yoshikawa.
This lemma gives a canonical way to find $g$ in Theorem \ref{thm:Zdanowicz Critierion}.
We note that if $R/f$ is normal and $F$-liftable, then $R/f$ is $F$-pure (Corollary \ref{cor:F-lift to F-pure}). 
Then we can take a splitting section compatible with an ideal $(f)$ since $F_{*}R$ is a free $R$-module.

\begin{lem}\label{lem:F-lift criterion}
    Let $f\in R$ and $\sigma\in \Hom_R(F_{*}R,R)$ is a splitting section compatible with an ideal
    $(f)$.
    Then $\Spec\,R/f$ is $F$-liftable if and only if 
    \[
    \Delta_1(f)+(-\sigma(F_{*}\Delta_1(f)))^p\in (f, I_f^{[p]}).
    \]
\end{lem}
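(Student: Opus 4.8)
The plan is to use Zdanowicz's criterion (Theorem \ref{thm:Zdanowicz Critierion}), which says that $\Spec R/f$ is $F$-liftable if and only if there exists some $g \in R$ with $\Delta_1(f) + g^p \in (f, I_f^{[p]})$. The content of the lemma is that, once we have a splitting section $\sigma$ compatible with $(f)$, the \emph{specific} choice $g = -\sigma(F_* \Delta_1(f))$ already works whenever \emph{any} $g$ works. So the ``if'' direction is immediate from Theorem \ref{thm:Zdanowicz Critierion}, and all the work is in the ``only if'' direction: assuming $F$-liftability, produce some $g_0$ from Zdanowicz's criterion, and then show that the canonical candidate $-\sigma(F_* \Delta_1(f))$ also satisfies the membership.

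First I would set $J \coloneqq (f, I_f^{[p]})$ and observe that $J$ is the extension to $R$ of a Frobenius-stable-type ideal: since $I_f^{[p]}$ is generated by $p$-th powers and $f \in (f)$, the ideal $J$ contains $J^{[p]}$-type information that interacts well with $\sigma$. The key structural fact I would aim to establish is that the set $S \coloneqq \{ g \in R : \Delta_1(f) + g^p \in J \}$ is either empty or a coset of the form $g_0 + T$, where $T = \{ h \in R : h^p \in J\}$; this follows because $g_1^p - g_2^p = (g_1 - g_2)^p$ in characteristic $p$, so two solutions differ by an element whose $p$-th power lies in $J$. Thus $F$-liftability gives us \emph{some} solution $g_0$, and it remains to verify that the canonical element $-\sigma(F_* \Delta_1(f))$ differs from $g_0$ by an element of $T$, i.e.\ that $\bigl(\sigma(F_*\Delta_1(f)) + g_0\bigr)^p \in J$ — equivalently, that $\bigl(-\sigma(F_*\Delta_1(f))\bigr)^p + \Delta_1(f) \in J$ directly.

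The computation I expect to carry the proof is the interaction between the compatible splitting $\sigma$ and the ideal $J$. Writing $h \coloneqq -\sigma(F_* \Delta_1(f))$, I want $h^p + \Delta_1(f) \in J$. The idea is to apply $\sigma$ (or rather, use the defining property $\sigma(F_*(f \cdot r)) \in (f)$ together with the projection formula $\sigma(F_*(a^p b)) = a\,\sigma(F_* b)$) to the known membership $\Delta_1(f) + g_0^p \in J$. Since $\sigma$ is a splitting with $\sigma(1) = 1$ and $\sigma(F_*(f)) \subseteq (f)$, and since $I_f^{[p]}$ consists of $p$-th powers $\bigl(\tfrac{\partial f}{\partial x_i}\bigr)^p$, the projection formula lets $\sigma$ pull these $p$-th powers out, sending $J = (f, I_f^{[p]})$ into $(f, I_f) \subseteq$ (something controllable), while fixing $\Delta_1(f)$ only up to its $\sigma$-image. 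Applying $\sigma(F_* -)$ to $g_0^p$ returns $g_0$ by the projection formula, and applying it to $\Delta_1(f)$ returns $-h$ by definition; tracking the image of $J$ under $\sigma$ should then yield exactly $g_0 - h \in T$, closing the argument.

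The main obstacle will be the bookkeeping in the last step: verifying that $\sigma(F_* J) \subseteq J$ (or a suitable variant), so that applying $\sigma$ to the membership $\Delta_1(f) + g_0^p \in J$ stays inside $J$. This requires the compatibility $\sigma(F_*(f)) \subseteq (f)$ to propagate to the full ideal $J = (f, I_f^{[p]})$, which should follow from the projection formula on the $I_f^{[p]}$ part (the $p$-th powers come out cleanly) combined with compatibility on the $(f)$ part; care is needed because $\Delta_1(f)$ itself is not an element we control under $\sigma$ beyond its definition, so I would organize the computation to apply $\sigma$ only to the \emph{error} term $g_0^p + \Delta_1(f)$ rather than to $\Delta_1(f)$ in isolation.
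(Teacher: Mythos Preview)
Your proposal is correct and follows essentially the same argument as the paper: apply $\sigma \circ F_*$ to the membership $\Delta_1(f) + g_0^p \in (f, I_f^{[p]})$, use the projection formula to extract $g_0$ and send $I_f^{[p]}$ into $I_f$ (together with compatibility sending $(f)$ into $(f)$), obtaining $\sigma(F_*\Delta_1(f)) + g_0 \in (f, I_f)$; then take $p$-th powers and subtract. The paper does this directly without the coset framing $S = g_0 + T$, and note that the ``suitable variant'' you anticipate is exactly $\sigma(F_*J) \subseteq (f, I_f)$ rather than $\subseteq J$ --- but since $(f, I_f)^{[p]} \subseteq J$, this is all that is needed.
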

\begin{proof}
Suppose that there exists $g\in R$ such that
$\Delta_1(f)+g^p\in (f,I^{[p]}_{f})$.
Then $F_{*}\Delta_1(f)+F_{*}g^p\in F_{*}(f,I^{[p]}_{f})$.
Sending by $\sigma$, 
we have $\sigma(F_{*}\Delta_1(f))+g\in (f,I_{f})$, and in particular, $\sigma(F_{*}\Delta_1(f))^{p}+g^{p}\in (f,I^{[p]}_{f})$.
Therefore, we obtain 
\[
\Delta_1(f)+(-\sigma(F_{*}\Delta_1(f)))^p=(\Delta_1(f)+g^p)-(\sigma(F_{*}\Delta_1(f))^{p}+g^{p})\in (f,I^{[p]}_{f}),
\]
as desired.
\end{proof}

\begin{rem}\label{rem:F-lift ci}
More generally, by the same method, we can show the following: Let $R/(f_1, \ldots, f_m)$ be a complete intersection algebra, and $\sigma\in \Hom_R(F_{*}R,R)$ a splitting section compatible with an ideal $(f_1, \ldots, f_m)$.
Then $R/(f_1, \ldots, f_m)$ is $F$-liftable if and only if there exists a sequence of elements $h_k \in R$ such that 
\[
\Delta_1 (f_i) + (-\sigma(F_{*}(\Delta_1(f_i))))^p = \sum_{1\leq k \leq n} \left(\frac{\partial f_i}{ \partial x_k}\right)^p h_k  
\mod (f_1, \ldots, f_m)
\]
for any $1\leq i \leq m.$
Indeed, by \cite[Theorem 4.10]{Zda18}, the $F$-liftability is equivalent to the exitence of a sequence of elements $h_k, g_i \in R$ such that
\[
\Delta_1 (f_i) + g_i^p = \sum_{1\leq k \leq n} \left(\frac{\partial f_i}{ \partial x_k}\right)^p h_k  
\mod (f_1, \ldots, f_m)
\]
for any $1\leq i \leq m.$
By applying $\sigma \circ F_*$ and taking the $p$-th power, we have
\[
\sigma(F_* \Delta_1 (f_i))^p + g_i^p = \sum_{1 \leq k \leq n} \left(\frac{\partial f_i}{ \partial x_k}\right)^p \sigma (F_* h_k)^p \mod (f_1, \ldots, f_m).
\]
Therefore, by replacing $h_i$, we obtain the desired equality.
\end{rem}

\begin{rem}
\label{rem:fliftpccriteria}
In Lemma \ref{lem:F-lift criterion}, we use an actual splitting $\sigma$, which is not easy to find in general.
However, if $R/f$ has an isolated singularity at $0 = (x_1, \ldots, x_n)$, we can loosen the condition of $\sigma$ as the following:
For any $f\in R$ and $\sigma\in \Hom_R(F_{*}R,R)$ satisfying $\sigma(1) \in R_{0}^{\times}$ and $\sigma (F_{\ast}(f)) \subset (f)$,
the spectrum $\Spec R/f$ is $F$-liftable if and only if
\begin{equation}
\label{eq:pccriteria}
(\sigma(F_*1))^p\Delta_1(f)+(-\sigma(F_{*}\Delta_1(f)))^p\in (f, I_f^{[p]}).
\end{equation}
Indeed, if this condition holds true, then by the same method in \cite[Corollary 4.12]{Zda18}, we can show that $\Spec (R/f)_{0}$ is $F$-liftable. Since $R$ has an isolated singularity, we can show that $\Spec R/f$ is $F$-liftable.
Conversely, if $\Spec R/f$ is $F$-liftable, the same method as in Lemma \ref{lem:F-lift criterion} gives the inclusion (\ref{eq:pccriteria}).

The criterion (\ref{eq:pccriteria}) is suitable for computer algebra system.
Indeed, such a $\sigma$ can be captured easily by using Fedder's criteria, and we can calculate $F$-liftability faster than the procedure in \cite{Zda17}.
\end{rem}

\begin{prop}\label{prop:canonical}
Let $(P\in X)$ be an $F$-pure canonical singularity.
If $p\neq 5$, then $(P\in X)$ is $F$-liftable.
When $p=5$, the singularity $(P\in X)$ is $F$-liftable if and only if it is not an RDP of type $E_8^1$.
\end{prop}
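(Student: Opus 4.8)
The plan is to reduce the question to an explicit hypersurface computation and then run Zdanowicz's criterion in both directions, using Lemma \ref{lem:F-lift criterion} to tame the infinite families. A canonical surface singularity is a rational double point, hence (after passing to the completion, under which the splitting of the reflexive Cartier operator is insensitive, as noted above) of the form $\Spec R/f$ with $R=k[x,y,z]$ as in Convention \ref{conv:delta} and $f$ one of Artin's normal forms. By Theorem \ref{thm:characterize F-lift via Cartier operators}, $F$-liftability is equivalent to the splitting of $C^{[1]}$, so it suffices to decide the question for each normal form. The first step is therefore to assemble, characteristic by characteristic, the list of $F$-pure rational double points from the classification of Hara and Hara--Watanabe; this singles out the families $A_n$, $D_n$ (with its characteristic-two refinements), $E_6$, $E_7$, $E_8$ and the small-characteristic variants $E_8^0, E_8^1, \ldots$.

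For the positive direction I would verify, for every $F$-pure normal form other than $E_8^1$ in characteristic $5$, the containment $\Delta_1(f)+g^p\in(f,I_f^{[p]})$ of Theorem \ref{thm:Zdanowicz Critierion}. The $A_n$ singularities have toric normal form $xy-z^{n+1}$ and are $F$-liftable by the canonical Frobenius lift of a toric variety (alternatively, the Zdanowicz containment can be checked for all $n$ at once through Lemma \ref{lem:F-lift criterion}). The exceptional types $E_6,E_7,E_8$ and their small-characteristic variants form a finite list for each relevant $p$, so each can be checked by a single computation, conveniently via the form \eqref{eq:pccriteria} of Remark \ref{rem:fliftpccriteria}. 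The genuinely infinite families are the characteristic-two series $D_{2n}^{n-1}$ and $D_{2n+1}^{n-1}$: here I would invoke Lemma \ref{lem:F-lift criterion}, which fixes the auxiliary element to $g=-\sigma(F_{*}\Delta_1(f))$ for a chosen splitting section $\sigma$ compatible with $(f)$. Writing down $\sigma$, and hence $g$, in closed form as polynomials depending on $n$, I can then confirm the required containment uniformly in $n$, replacing infinitely many machine checks by a single symbolic identity.

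The one place where $F$-liftability must \emph{fail} is $E_8^1$ in characteristic $5$, which is also the reason the conclusion is sensitive only to the prime $p=5$. For this I would use Theorem \ref{thm:Zdanowicz Critierion} in the negative direction: with $f$ the Artin normal form of $E_8^1$ (explicitly $f=z^2+x^3+xy^4$ in $p=5$), I compute $\Delta_1(f)$, reduce modulo the finite-colength ideal $(f,I_f^{[p]})$, and check that the resulting class is not of the form $-g^p$ for any $g\in R$. Since $F$-purity at $p=5$ is forced by only a small set of surviving monomials, the obstruction to solving $\Delta_1(f)+g^p\in(f,I_f^{[p]})$ can be exhibited explicitly in the quotient ring.

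The main obstacle I anticipate is the uniform treatment of the $D_{2n}^{n-1}$ and $D_{2n+1}^{n-1}$ families. The difficulty is not conceptual but one of bookkeeping: one must guess the correct closed-form splitting section $\sigma$, compute $\Delta_1(f)$ and $\sigma(F_{*}\Delta_1(f))$ as explicit polynomials, and verify the Zdanowicz containment for all $n$ simultaneously by tracking which monomials lie in $(f,I_f^{[p]})$. Lemma \ref{lem:F-lift criterion} is exactly the device that makes this finite, so once the correct $\sigma$ is identified the remaining work is a careful but routine monomial computation; the completeness of the case list, guaranteed by the $F$-pure classification, then finishes the proof.
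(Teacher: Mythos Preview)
Your outline follows the paper's proof closely: reduce to Artin's hypersurface normal forms, decide each via Zdanowicz's criterion (Theorem \ref{thm:Zdanowicz Critierion}), handle the finitely many exceptional equations by direct or machine computation, and treat the infinite characteristic-two $D$-series uniformly in $n$ through Lemma \ref{lem:F-lift criterion} with an explicit splitting section. The paper does exactly this, writing out the $D_{2n}^{n-1}$ and $D_{2n+1}^{n-1}$ identities by hand (taking $\sigma(F_*r)=u(F_*(rf))$ for the standard free-basis projection $u$) and delegating the remaining named cases, including the non-liftability of $E_8^1$ at $p=5$, to Zdanowicz's Macaulay2 routine.

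There is, however, a real omission in your case list. You assert that ``the genuinely infinite families are the characteristic-two series $D_{2n}^{n-1}$ and $D_{2n+1}^{n-1}$'', but the ordinary $D_n$ singularities in every characteristic $p>2$ also form an infinite family of $F$-pure (indeed strongly $F$-regular) RDPs, and your breakdown into ``$A_n$ via toric'', ``exceptional types via finite check'', and ``the $p=2$ $D$-series via Lemma \ref{lem:F-lift criterion}'' gives no argument for them. The paper's reduction ``it suffices to show (1), (2), (3)'' silently absorbs all strongly $F$-regular RDPs---so in particular $A_n$, $D_n$ with $p>2$, and the $E$-types in good characteristic---into a single prior step; if you want a self-contained treatment you must either run Lemma \ref{lem:F-lift criterion} once more on the odd-characteristic $D_n$ normal form, or note that for $p>2$ each $D_n$ is a $\Z/2\Z$-quotient of an $A$-type (toric) singularity and invoke Corollary \ref{cor:finite descent}. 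Separately, your stated equation for $E_8^1$ at $p=5$ is wrong: Artin's form is $z^2+x^3+y^5+xy^4$, not $z^2+x^3+xy^4$, and the missing $y^5$ term alters $\Delta_1(f)$ and hence the obstruction computation you describe.
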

\begin{proof}
By Artin’s approximation theorem, we may assume that $X=\Spec\,R/f$, where $f$ is one of equations of RDPs given in \cite{Artin(RDP)}. 

Let $P\in X$ be a singular point. It suffices to show the following: 
\begin{enumerate}
\item[\textup{(1)}]
If $p=2$ and $(P\in X)$ is of type $D_{2n}^{n-1}(n\geq 2)$, $D_{2n+1}^{n-1}(n\geq 2)$, $E_{6}^{1}$, $E_{7}^{3}$, or $E_{8}^4$, then $(P\in X)$ is $F$-liftable.
\item[\textup{(2)}]
If $p=3$ and $(P\in X)$ is of type $E_{6}^{1},$ $E_{7}^{1}$, or $E_{8}^{3}$, then $(P\in X)$ is $F$-liftable.
\item[\textup{(3)}]
If $p=5$ and $(P\in X)$ is of type $E_{8}^{1}$, then $(P\in X)$ is not $F$-liftable.
\end{enumerate}

Except for the case where $p=2$ and $P$ is of type $D_{2n}^{n-1}$ or $D_{2n+1}^{n-1}$, the assertion follows from Theorem \ref{thm:Zdanowicz Critierion} and Zdanowicz's program of Macaulay 2 in \cite[Section 8.2]{Zda17}.

Suppose that $p=2$ and $P$ is of type $D_{2n}^{n-1}$.
We put
\[
f\coloneqq z^2+x^2y+xy^n+xyz.
\]
We put $(f_x,f_y,f_z)\coloneqq(\frac{\partial f}{\partial x}, \frac{\partial f}{\partial y}, \frac{\partial f}{\partial z})$.
By Lemma \ref{lem:F-lift criterion}, it suffices to prove that 
\[
 \Delta_1(f)+(-\sigma(F_{*}\Delta_1(f)))^p \in (f, f_{x}^2, f_{y}^2, f_{z}^2)
 \]
for some splitting $\sigma \in \Hom_{R}(F_{*}R,R)$.
We can take $\sigma$ as
\[
\sigma (F_* r) = u(F_{\ast}(rf)),
\]
where $u\in \Hom_{R} (F_{*}R,R)$ is defined by
\[
u(F_{*}x^iy^jz^k)
\coloneqq
\begin{cases}
x^{\frac{i-1}{2}}y^{\frac{j-1}{2}}z^{\frac{k-1}{2}} & \textup{ if } i,j, \textup{ and }k \textup{ are odd,} \\
0 & \textup{ otherwise.}
\end{cases}
\]
Then, by direct computation, we can check that
    \begin{align*}
  &  \Delta_1(f)+(-\sigma(F_{*}\Delta_1(f)))^p\\
    =& z^2f  + (x^2+xy^{n-1}+ xz+y^{2n-2}+y^{n-1}z) f_{z}^2,
    \end{align*}
and we have the desired result.

Next, suppose that the singularity is of type $D_{2n+1}^{n-1}$.
We put 
\[
f\coloneqq z^2+x^2y+y^nz+xyz,
\]
and $\sigma \coloneqq u(F_{\ast} (-\times f))$ as before.
Then we can check that
\begin{align*}
   & \Delta_1(f)+(-\sigma(F_{*}\Delta_1(f)))^p\\
    =& (z^2+y^{2n-1})f  + (x^2+xy^{n-1}+ y^{2n-3}) f_{x}^2 +(x^2+z^2+xz+y^{n-1}z)f_{z}^2,
\end{align*}
and it finishes the proof.
\end{proof}

\subsection{\texorpdfstring{$F$}--liftability of lc surface singularities}

Next, we investigate $F$-liftability for lc surface singularities that are not RDPs.
In this section, we freely use the classification of lc surface singularities \cite[Section 3.3]{Kol13}.
The list in \cite[Section 7.B]{Gra} is also useful.

\begin{defn}
Let $(X,B)$ be a pair of a normal variety and a reduced divisor.
We say a singularity $(P\in X,B)$ is \textit{toric} if $(X,B)$ has a common \'etale neighborhood with some toric pair.
\end{defn}

\begin{lem}\label{lem:cyc quotient are toric}
    A cyclic quotient singularity $(P\in X,B)$ \cite[3.40.1]{Kol13} is a toric singularity.
    In particular, it is $F$-liftable.
\end{lem}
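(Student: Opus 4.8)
The plan is to establish the two assertions separately: first that a cyclic quotient singularity $(P\in X,B)$ is toric, and then invoke the already-proven $F$-liftability of toric pairs. The second half will follow from the fact that toric varieties over $k$ admit a canonical lift to $W_2(k)$ together with a Frobenius lift, compatible with the torus-invariant boundary divisor; this is a standard fact (the toric Frobenius on $W_2(k)$ lifts the geometric Frobenius and multiplies each torus-invariant divisor by $p$), so once toricity is established, $F$-liftability is immediate from the very definition of a toric singularity in the preceding \textbf{Definition} and the observation that the explicit monomial Frobenius lift satisfies the required compatibility $\tilde F^{*}(\tilde B_r|_U) = p(\tilde B_r|_U)$.

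\textbf{The main content is the toricity claim.} First I would recall from the classification in \cite[3.40.1]{Kol13} the explicit local form of a cyclic quotient singularity: it is the quotient $\mathbb{A}^2 / \mu_m$ by a diagonal action $\zeta\cdot(x,y) = (\zeta x, \zeta^q y)$ for suitable coprime $m,q$, with the boundary $B$ given by the images of the coordinate axes (the torus-invariant divisors). The plan is then to exhibit a common \'etale neighborhood of $(P\in X, B)$ and an honest toric pair. The cleanest route is to observe directly that the quotient $\mathbb{A}^2/\mu_m$ with the coordinate-axis boundary \emph{is itself} an affine toric variety: the ring of invariants $k[x,y]^{\mu_m}$ is the semigroup ring $k[\sigma^{\vee}\cap M]$ for the cone $\sigma$ dual to the lattice data $(m,q)$, and the torus-invariant prime divisors correspond exactly to the rays of $\sigma$, hence to the images of the coordinate axes. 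Thus $(P\in X, B)$ is literally of the form of a toric pair, and the two pairs share a common \'etale neighborhood trivially (they are \'etale-locally, indeed formally, isomorphic).

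\textbf{The step I expect to require the most care} is matching the boundary divisor $B$ in the sense of Koll\'ar's classification with the torus-invariant boundary of the toric description, i.e.\ checking that the components of $B$ prescribed in \cite[3.40.1]{Kol13} correspond precisely to the invariant divisors of the cone, rather than to some non-invariant curve. This is where one must be attentive: a cyclic quotient singularity in the lc classification may carry a boundary supported on the image of the coordinate axes, and one needs the images of $\{x=0\}$ and $\{y=0\}$ to descend to torus-invariant Weil divisors on $X$. Since the $\mu_m$-action is diagonal, both axes are invariant and their images are exactly the two boundary rays of $\sigma$, so the identification goes through; but I would state this correspondence explicitly rather than leave it implicit. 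Once toricity is confirmed with the correct boundary, the ``in particular'' clause is a one-line consequence of the preceding discussion of toric $F$-liftability.
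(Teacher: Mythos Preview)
Your approach is correct and more self-contained than the paper's, which simply cites \cite[Theorem~2.13]{Lee-Nakayama} for toricity and \cite[Remark~2.7~(3)]{Kaw4} for the $F$-liftability of toric pairs. Identifying $k[x,y]^{\mu_m}$ with a semigroup ring and then invoking the monomial Frobenius lift on the toric $W_2(k)$-model is the standard hands-on route, and once the explicit quotient form is in hand everything you say goes through.

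The step that actually needs care, however, is not the boundary matching you flag but the prior one: passing from Koll\'ar's classification \cite[3.40.1]{Kol13}---which is phrased in terms of the resolution graph (a chain of smooth rational curves with the strict transform of $B$ meeting the ends)---to the explicit model $\mathbb{A}^2/\mu_m$ with coordinate-axis boundary. Over $\C$ this is classical tautness of Hirzebruch--Jung singularities, but in positive characteristic tautness is not automatic (witness the RDP story), and establishing it for these graphs is precisely what Lee--Nakayama's theorem supplies. If you can verify that Koll\'ar's text already gives the quotient description characteristic-free, your argument stands on its own; otherwise a reference at that point is still needed. Note also that the paper's proof explicitly treats the plt case in which $B$ has only one component (``$B_2=0$'' in Lee--Nakayama's notation), whereas your description with both coordinate axes does not; this matters because the lemma is later invoked in exactly that plt setting.
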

\begin{proof}
    The first assertion follows from \cite[Theorem 2.13]{Lee-Nakayama}.
    We note that, for a plt case, we have $B_2=0$ in the notation of \cite[Theorem 2.13]{Lee-Nakayama}, but we can reduce the case $B_2\neq 0$ as in \cite[Proof of Theorems 2.11 and 2.13]{Lee-Nakayama}.
    The last assertion follows from the fact that toric pairs are $F$-liftable \cite[Remark 2.7 (3)]{Kaw4}.
\end{proof}

\begin{proof}[Proof of Theorem \ref{Introthm:F-purity and F-liftability}]
    Since $F$-liftability implies $F$-purity (Corollary \ref{cor:F-lift to F-pure}), it suffices to show that the if direction.
    If $(P\in X,B)$ is canonical, then the assertion follows from Proposition \ref{prop:canonical}. 
    Suppose that $(P\in X,B)$ is klt, but not canonical. In this case, the $F$-purity of $(P\in X)$ is equivalent to the strong $F$-regularity of $(P\in X)$ by \cite[Theorem 1.2]{Hara(two-dim)}, and the assertion follows from \cite[Theorem 2.12 (1)]{Kaw4}.
    Suppose that $(P\in X,B)$ is plt, but not klt. In this case, $(P\in X,B)$ have to be a plt cyclic quotient singularity \cite[3.35 (1)]{Kol13}, and thus it is $F$-liftable by Lemma \ref{lem:cyc quotient are toric}.
\end{proof}

\begin{lem}\label{lem:Dihedral}
    Let $(P\in X,B)$ be a Dihedral quotient singularity \cite[3.40.2]{Kol13}.
    If $(P\in X,B)$ is $F$-pure, then it is $F$-liftable.
\end{lem}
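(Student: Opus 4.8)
The plan is to realize a Dihedral quotient singularity $(P\in X,B)$ as the quotient of a simpler $F$-liftable singularity by a finite group of order prime to $p$, and then invoke the finite descent result (Theorem \ref{thm:finite descent}). Concretely, a Dihedral quotient singularity is, by definition, an \'etale-local quotient $\mathbb{A}^2/D$ where $D$ is a binary dihedral group acting on $\mathbb{A}^2$; the boundary $B$ is the image of a $D$-invariant divisor. The binary dihedral group $D$ of order $4n$ contains the cyclic group $C_{2n}$ as an index-two subgroup, so there is an intermediate quotient $X'\coloneqq \mathbb{A}^2/C_{2n}$ together with a degree-two finite morphism $g\colon X'\to X$. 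The intermediate quotient $(P'\in X',B')$ is a cyclic quotient singularity, hence toric, hence $F$-liftable by Lemma \ref{lem:cyc quotient are toric}.

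The main steps I would carry out are as follows. First I would set up the tower $\mathbb{A}^2 \to X' \to X$ and identify $X'$ as a cyclic quotient singularity, taking $B'$ to be the appropriate reduced pullback of $B$ (the sum of components over which the ramification index is prime to $p$, as in Theorem \ref{thm:finite descent}). Second, I would verify that the degree of $g$ is $2$, which is prime to $p$ precisely when $p\neq 2$; when $p=2$ the binary dihedral group itself has order divisible by $p$, and in that characteristic the $F$-pure Dihedral quotient singularities are very restricted, so I would treat $p=2$ separately. Third, once $g$ has degree prime to $p$ and $(X',B')$ is $F$-liftable, Theorem \ref{thm:finite descent} immediately gives that $(X,B)$ is $F$-liftable. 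The $F$-purity hypothesis enters in controlling which characteristics and which precise group actions can occur: $F$-purity of a Dihedral quotient forces constraints on $p$ relative to $n$ (via the classification in \cite[3.40.2]{Kol13} and the $F$-purity classification), and I would use these to ensure the covering group has order prime to $p$ in the relevant cases.

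The hard part will be the characteristic $p=2$ case, where the natural cyclic subcover has index two and hence ramification (or group order) divisible by $p$, so Theorem \ref{thm:finite descent} does not apply directly. For $p=2$ I expect to need a different covering --- for instance exhibiting the $F$-pure Dihedral singularities in characteristic two as the specific hypersurface RDPs of type $D$ already handled in Proposition \ref{prop:canonical}, or finding an alternative prime-to-$p$ cover --- and to use the $F$-purity hypothesis to rule out the remaining non-$F$-liftable configurations. A secondary subtlety is correctly matching the boundary divisor $B$ with its transform $B'$ under $g$ so that the ramification-index condition in Theorem \ref{thm:finite descent} is met; this requires checking that the reflection hyperplanes of $D$ either lie over $B$ with tame ramification or do not contribute, which is a local computation on the dihedral action.
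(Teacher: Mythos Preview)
Your overall strategy---construct a degree-two cover that is a cyclic quotient singularity, then apply Theorem~\ref{thm:finite descent}---is exactly the paper's. But two points diverge from the paper's execution, and the first is a genuine gap.

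\textbf{Construction of the cover.} You assume that a Dihedral quotient singularity in the sense of \cite[3.40.2]{Kol13} is literally $\mathbb{A}^2/D$ for a binary dihedral group $D$, and then pass to the intermediate quotient by the cyclic index-two subgroup. This is not justified: in Koll\'ar's classification these singularities are defined by their resolution graphs (a chain with two $(-2)$-leaves at one end and the boundary meeting the other end), and the self-intersections along the chain are arbitrary $\le -2$. In positive characteristic there is no a priori reason such a singularity arises as a quotient of $\mathbb{A}^2$ by any finite group, so your tower $\mathbb{A}^2 \to X' \to X$ need not exist. The paper avoids this entirely: it observes that the index of $K_X+B$ is two (read off from the discrepancies of the two leaves, which are $-1/2$), and takes the \emph{index one cover} $g\colon (P'\in X',B')\to (P\in X,B)$. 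This is an intrinsic degree-two cover that always exists, and \cite[Proposition~2.50]{Kol13} shows $(X',B')$ is lc but not plt with $K_{X'}+B'$ Cartier, hence all discrepancies are integers, forcing it to be a cyclic quotient lc singularity \cite[3.35~(2)]{Kol13}. Then Lemma~\ref{lem:cyc quotient are toric} and Theorem~\ref{thm:finite descent} finish.

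\textbf{The $p=2$ case.} You anticipate this as the hard part and propose side arguments. In fact it is vacuous: the paper notes at the outset that $F$-purity of a Dihedral quotient singularity forces $p\neq 2$ by \cite[Theorem~4.5~(2)]{Hara-Watanabe}, so once the index-one cover is in hand the degree is automatically prime to $p$ and no separate analysis is needed.
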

\begin{proof}
    We note that the index of $K_X+B$ is two. This fact can be confirmed by the fact that the discrepancy of the exceptional leaves are $-1/2$.
    Moreover, since $(P\in X,B)$ is $F$-pure, we have $p\neq 2$ by \cite[Theorem 4.5 (2)]{Hara-Watanabe}.
    Let $g\colon (P'\in X',B')\to (X,B)$ be the index one cover \cite[Definition 2.49]{Kol13}.
    Then $(P'\in X',B')$ is lc, and not plt by \cite[Proposition 2.50]{Kol13}. Moreover, all the discrepancy have to be integer since $K_{X'}+B'$ is Cartier. Thus, $(P'\in X',B')$ is a cyclic quotient lc singularity \cite[3.35 (2)]{Kol13}, which is $F$-liftable by Lemma \ref{lem:cyc quotient are toric}.
    Thus, we conclude by Theorem \ref{thm:finite descent}.
\end{proof}

\begin{lem}\label{lem:cone}
    Let $Y$ be an ordinary Abelian variety and $L$ an ample invertible sheaf.
    Then the affine cone
    \[
    X\coloneqq \Spec\,\oplus_{m\geq 0}H^0(Y,L^m)
    \]
    over $Y$ is $F$-liftable.
\end{lem}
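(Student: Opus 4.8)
The plan is to exhibit an explicit Frobenius lift on the canonical lift of the cone, using the ordinariness of the abelian variety $Y$. The key structural fact is that an ordinary abelian variety $Y$ is canonically $F$-liftable: the canonical (Serre--Tate) lift $\widetilde{Y}$ over $W_2(k)$ carries a canonical lift $\widetilde{F}_Y$ of Frobenius, because the multiplication-by-$p$ isogeny factors through Frobenius and the kernel of Frobenius is a lift of the connected-\'etale type structure. Equivalently, via Theorem \ref{thm:characterize F-lift via Cartier operators}, it suffices to split the reflexive Cartier operator $C^{[1]}_{X}$ on the cone. So the two natural routes are (i) build the lift geometrically from the Serre--Tate lift of $(Y,L)$, or (ii) produce a Frobenius splitting of $X$ compatible with the grading and then invoke the canonical-lift machinery of Section \ref{section:canonical lift}.

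First I would recall that since $Y$ is an ordinary abelian variety, it is globally $F$-split, and moreover the splitting can be chosen so that the section lifts to $W_2(k)$; concretely, the Serre--Tate canonical lift $\widetilde{Y}$ admits a lift $\widetilde{F}_Y$ of the absolute Frobenius (see \cite{AWZ} for the correspondence between such lifts and splitting sections). Next I would use the ample line bundle $L$: because $Y$ is ordinary and $F$-liftable, one can choose a compatible lift of $L$ to $\widetilde{Y}$ together with an identification $\widetilde{F}_Y^{*}\widetilde{L}\cong \widetilde{L}^{\otimes p}$, so that Frobenius acts on the graded section ring $\bigoplus_{m\geq 0}H^{0}(Y,L^{m})$ in a way that lifts. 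The cone $X$ is $\Spec$ of this section ring, and a Frobenius lift on $\widetilde{Y}$ together with the identification on $\widetilde{L}$ should induce a Frobenius lift $\widetilde{F}_X$ on the lifted cone $\widetilde{X}=\Spec \bigoplus_{m\geq 0}H^{0}(\widetilde{Y},\widetilde{L}^{m})$, provided the vanishing $H^{1}(\widetilde{Y},\widetilde{L}^{m})=0$ holds so that the graded pieces deform without obstruction.

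The cleaner execution, avoiding graded-ring bookkeeping, is to pass through the Cartier-operator characterization. The punctured cone $X\setminus\{P\}$ is a $\mathbb{G}_m$-bundle (or $\mathbb{A}^1$-bundle complement) over $Y$, and over this smooth locus the sheaves $Z^{1}$, $B^{1}$, and $\Omega^{1}$, together with the Cartier exact sequence \eqref{log smooth 2}, are pulled back from $Y$ in a $\mathbb{G}_m$-equivariant fashion. Since $Y$ is ordinary and $F$-liftable, the Cartier operator $C^{1}_{Y}$ splits on $Y$; I would pull this splitting back to $X\setminus\{P\}$ and then push forward along the inclusion $j\colon X\setminus\{P\}\hookrightarrow X$ of the big open subscheme (the cone point has codimension $\geq 2$ since $\dim X\geq 2$) to obtain a splitting of $C^{[1]}_{X}\colon Z^{[1]}_{X}\to\Omega^{[1]}_{X}$. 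By Theorem \ref{thm:characterize F-lift via Cartier operators}, the existence of this splitting is equivalent to $F$-liftability of $X$.

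The main obstacle will be controlling the behavior of the splitting across the cone point, i.e.\ checking that the $\mathbb{G}_m$-equivariant splitting on $X\setminus\{P\}$ extends reflexively and that the graded vanishing needed to propagate the Frobenius lift in the direct-image approach actually holds for $L^{m}$ on the ordinary $\widetilde{Y}$. In particular, one must ensure that pulling back $C^{1}_{Y}$ interacts correctly with the relative (fiber-direction) part of $\Omega^{1}_{X/k}$ coming from the $\mathbb{G}_m$-direction; the fiber-direction differentials contribute an Euler-type summand, and I would need the ordinariness of $Y$ (equivalently the $F$-liftability of $L$ as a line bundle on an $F$-liftable base) to split that piece as well. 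Concretely, the hard technical point is to verify that the relevant splitting section is $\mathbb{G}_m$-homogeneous of weight zero so that it descends to a genuine morphism of reflexive sheaves on $X$, rather than merely on the punctured cone; once that homogeneity is established, reflexivity and the codimension-$2$ extension property finish the argument.
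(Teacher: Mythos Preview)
Your route (i) is exactly what the paper does, and it is the simpler of your two options rather than the harder one. The paper invokes the canonical (Serre--Tate) lift $(\tilde{Y},\tilde{F}_Y,\tilde{L})$ over $W(k)$ from \cite[Appendix, Theorem~1]{MS87}, which already supplies the crucial identification $\tilde{F}_Y^{*}\tilde{L}\cong\tilde{L}^{p}$; then $\tilde{X}\coloneqq\Spec\bigoplus_{m\ge 0}H^{0}(\tilde{Y},\tilde{L}^{m})$ is a lift of $X$, and the pullback maps $H^{0}(\tilde{Y},\tilde{L}^{m})\to H^{0}(\tilde{Y},\tilde{L}^{pm})$ assemble into a lift of $F_X$. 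That is the whole proof. The $H^{1}$-vanishing you flag is not an obstacle: for ample $L$ on an abelian variety one has $H^{i}(Y,L^{m})=0$ for $i>0$ and $m>0$, so base change gives $H^{0}(\tilde{Y},\tilde{L}^{m})\otimes k\cong H^{0}(Y,L^{m})$ and $\tilde{X}$ is genuinely a flat lift of $X$.

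By contrast, your route (ii) is not cleaner. The Cartier exact sequence on the punctured cone is \emph{not} the pullback of that on $Y$: the $\mathbb{G}_m$-fiber direction contributes an extra summand to $\Omega^{1}$, $Z^{1}$, and $B^{1}$, exactly as you note, and a splitting of $C^{1}_{Y}$ alone says nothing about that summand. To handle it you would need to know that the total space of the $\mathbb{G}_m$-torsor associated to $L$ is $F$-liftable compatibly with $\tilde{F}_Y$, which is precisely the statement $\tilde{F}_Y^{*}\tilde{L}\cong\tilde{L}^{p}$ again---i.e.\ route (i). So drop (ii), commit to (i), and cite \cite{MS87} for the canonical lift of $(Y,F_Y,L)$ with its Frobenius-compatibility on $L$.
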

\begin{proof}
    In this case, we can show that $X$ and its Frobenius $F_X$ lifts to $W(k)$. 
    We take a canonical lift $\tilde{Y}$, $\tilde{F_Y}$, and $\tilde{L}$ over $W(k)$ of $Y$, $F_Y$, and $L$ (\cite[Appendix, Theorem 1]{MS87}).
    Then $\tilde{X}\coloneqq \Spec\,\oplus_{m\geq 0}H^0(\tilde{Y},\tilde{L}^m)$ is a lift of $X$.
    Since $\tilde{F}^{*}\tilde{L}=\tilde{L}^p$ by \cite[Appendix, Theorem 1 (3)]{MS87}, we have a map $H^0(\tilde{Y},\tilde{L}^m)\to H^0(\tilde{Y},\tilde{L}^{pm})$ for every $m\geq 0$.
    Gluing these maps, we have 
    \[
    \sO_{\tilde{X}}=\oplus_{m\geq 0}H^0(\tilde{Y},\tilde{L}^m)\to \oplus_{m\geq 0}H^0(\tilde{Y},\tilde{L}^{pm})\subset \sO_{\tilde{X}}= \oplus_{m\geq 0}H^0(\tilde{Y},\tilde{L}^m),
    \]
    which gives a lift of the Frobenius map $F_X$.
\end{proof}

\begin{lem}\label{lem:simple elliptic}
    A simple elliptic singularity $(x\in X) $\cite[3.39.1]{Kol13} is $F$-liftable if it is $F$-pure.
\end{lem}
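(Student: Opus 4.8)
The plan is to reduce the $F$-liftability of a simple elliptic singularity to the affine cone construction of Lemma \ref{lem:cone}. Recall that a simple elliptic singularity $(x\in X)$ is one whose minimal resolution has a single smooth elliptic curve $E$ as its exceptional divisor, with $E^2=-d<0$. The key geometric fact is that such a singularity is analytically (or after passing to a common \'etale neighborhood) isomorphic to the affine cone over an elliptic curve $Y$ with respect to an ample line bundle $L$ of degree $d$; this is standard and goes back to Pinkham and Wahl. So the first step is to invoke this identification, reducing the problem to showing that the cone over an elliptic curve is $F$-liftable.

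The crucial point is the interaction between $F$-purity and the structure of the elliptic curve. By Lemma \ref{lem:cone}, the cone over an \emph{ordinary} abelian variety is $F$-liftable, so it suffices to show that the $F$-purity hypothesis forces the elliptic curve $Y$ to be ordinary. An elliptic curve in characteristic $p>0$ is either ordinary or supersingular, and the standard criterion (due to Hara--Watanabe and others) is that the cone over $Y$ is $F$-pure precisely when $Y$ is ordinary. Concretely, the Frobenius action on $H^1(Y,\sO_Y)$ is bijective exactly for ordinary $Y$, and this Hasse--Witt condition is what governs $F$-splitting of the section ring. Thus the second step is to argue: $(x\in X)$ $F$-pure $\Rightarrow$ $Y$ is ordinary.

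With $Y$ ordinary, I would then apply Lemma \ref{lem:cone} directly, taking the abelian variety there to be the elliptic curve $Y$ and $L$ the corresponding ample polarizing line bundle, to conclude that $X$ is $F$-liftable. Since $F$-liftability of a singularity is defined via a common \'etale neighborhood (and is detected by the splitting of $C^{[1]}_{X,B}\otimes\sO_{X,P}^{\wedge}$, which is insensitive to \'etale-local representatives), the liftability of the concrete cone transfers back to the original singularity $(x\in X)$.

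I expect the main obstacle to be the clean identification in the first step: matching the simple elliptic singularity $(x\in X)$ with a cone over an elliptic curve in a way that is valid \'etale-locally (rather than merely analytically) and that is compatible with the $F$-liftability notion used in the paper. The ordinarity deduction in the second step should be essentially a citation to the classification of $F$-pure surface singularities (\cite{Hara-Watanabe} or \cite{Mehta-Srinivas(surface)}), which already records that $F$-pure simple elliptic singularities correspond to ordinary elliptic curves; the only care needed is to confirm that "ordinary elliptic curve" is exactly the input required by Lemma \ref{lem:cone}.
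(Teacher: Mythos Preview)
Your proposal is correct and follows essentially the same route as the paper: deduce that the exceptional elliptic curve is ordinary from $F$-purity (the paper cites \cite[Theorem 1.1]{Mehta-Srinivas(surface)}), identify the singularity with the affine cone over that curve, and then apply Lemma \ref{lem:cone}. The only substantive difference is the reference for the cone identification: where you gesture at Pinkham--Wahl and worry about \'etale-local validity, the paper invokes \cite[Theorem 4.2]{Hirokado(deformation)}, which handles exactly this point in positive characteristic and resolves the obstacle you flagged.
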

\begin{proof}
    Let $f\colon Y\to X$ be the minimal resolution with $E\coloneqq \Exc(f)$.
    Then $E$ is an ordinary elliptic curve by \cite[Theorem 1.1]{Mehta-Srinivas(surface)}.
    By \cite[Theorem 4.2]{Hirokado(deformation)}, we may assume that $X$ is affine cone $\Spec\,\oplus_{m\geq 0}H^0(E,L^m)$ over $E$ with $L=\sO_E(-E)$.
    Then the assertion follows from Lemma \ref{lem:cone}.
\end{proof}

A cusp singularity is always $F$-pure \cite[Theorem 1.2]{Mehta-Srinivas(surface)}.
We expect that it is also $F$-liftable:

\begin{conj}\label{conj}
    A cusp singularity \cite[3.39.2]{Kol13} is $F$-liftable.
\end{conj}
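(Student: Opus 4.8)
The plan is to combine birational descent with the toroidal structure of cusp singularities, reducing the problem to an equivariant toric splitting that descends along the defining $\Z$-action.

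\textbf{Step 1: Reduction to the minimal resolution.} Let $f\colon Y\to X$ be the minimal resolution of the cusp $(P\in X)$, and let $E=\Exc(f)$ be the exceptional cycle of rational curves, so that $(Y,E)$ is log smooth. Since $E$ is $f$-exceptional, its divisorial image is $f(E)=0$, so Proposition \ref{lem:birational F-lift descend} shows that $F$-liftability of the pair $(Y,E)$ implies $F$-liftability of $(X)=(X,0)$. By Theorem \ref{thm:characterize F-lift via Cartier operators} it therefore suffices to produce a splitting of the reflexive Cartier operator
\[
C^{[1]}_{Y,E}\colon Z^{[1]}_Y(\log\,E)\to \Omega^{[1]}_Y(\log\,E).
\]

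\textbf{Step 2: Toroidal model.} I would invoke Hirzebruch's toroidal description of a cusp. There is a complete fan in $N_\R\cong\R^2$, with $N=\Z^2$, whose rays are spanned by a doubly infinite sequence of primitive lattice vectors $(v_i)_{i\in\Z}$ satisfying $v_{i-1}+v_{i+1}=b_iv_i$ (where $-b_i$ are the self-intersections), together with a hyperbolic matrix $M\in GL_2(\Z)$ with $Mv_i=v_{i+r}$, $r$ the number of curves in the cycle. The associated toric surface $\tilde Y$ carries the torus-invariant boundary $\tilde E=\sum_i D_i$, an infinite chain of rational curves, and $M$ induces a free toric $\Z$-action on $(\tilde Y,\tilde E)$ whose quotient, in a formal (or \'etale) neighborhood of the exceptional locus, is $(Y,E)$.

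\textbf{Step 3: Equivariant toric splitting and descent.} The toric pair $(\tilde Y,\tilde E)$ is $F$-liftable, and concretely the canonical Frobenius splitting of a toric variety---defined monomial-wise on the torus by $\sigma(x^a)=x^{a/p}$ when the character $a$ is divisible by $p$ and $\sigma(x^a)=0$ otherwise---yields a splitting of $C^{[1]}_{\tilde Y,\tilde E}$. Because this splitting is defined through the monomial grading and $M$ acts linearly on characters while preserving divisibility by $p$, it is $GL_2(\Z)$-equivariant, hence $M$-invariant. Since $\tilde Y\to Y$ is an \'etale $\Z$-cover near the exceptional locus and the reflexive sheaves $Z^{[1]}(\log)$ and $\Omega^{[1]}(\log)$ pull back compatibly, the $M$-invariant splitting descends to a splitting of $C^{[1]}_{Y,E}$. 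Combined with Step 1 and Theorem \ref{thm:characterize F-lift via Cartier operators} (3)$\Rightarrow$(1), this proves the conjecture. This is the hyperbolic analogue of the parabolic argument used for simple elliptic singularities, where the Serre--Tate canonical lift of an ordinary abelian variety played the role now taken by the canonical toric lift (cf.\ Lemmas \ref{lem:cone} and \ref{lem:simple elliptic}).

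\textbf{Main obstacle.} The delicate point is the descent along the $\Z$-quotient, which is an \emph{infinite} cover, so the finite descent of Theorem \ref{thm:finite descent} does not apply directly. One must carry out the descent intrinsically at the level of sheaves in the formal completion along $E$, verifying that the $M$-invariant splitting glues to a genuine splitting of $C^{[1]}_{Y,E}$ over the proper resolution $Y$ (away from $E$ the pair is smooth, so extension there is harmless). If one prefers the Frobenius-lift formulation, the same difficulty reappears as the need to check that the descended lift is flat over $W_2(k)$ and still satisfies $\tilde F^{*}E=pE$ on the log smooth locus. Making this quotient construction rigorous over $W_2(k)$ is the crux, and is precisely why the statement is posed as a conjecture rather than a theorem.
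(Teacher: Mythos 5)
You have proposed a proof of Conjecture \ref{conj}, but note that the paper itself offers no proof of this statement: it is posed as an open conjecture, and the authors only supply evidence for it, namely Proposition \ref{prop:cusp} (certain hypersurface and complete intersection cusps in characteristic $2$, verified through Zdanowicz's criterion and Lemma \ref{lem:F-lift criterion}) and the computer checks reported in Remark \ref{rem:cuspsingularity}. So a complete version of your argument would settle an open problem; as written, it does not, and you in fact concede this in your final paragraph.

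There are two genuine gaps. First, Step 2 silently assumes that \emph{every} cusp singularity over a field of characteristic $p$ is, formally near the exceptional cycle, isomorphic to the Inoue--Hirzebruch toroidal $\Z$-quotient determined by its cycle $(b_1,\ldots,b_r)$. The toroidal (Mumford-type) construction does produce, in any characteristic, \emph{some} cusp with any prescribed hyperbolic cycle; but identifying a \emph{given} cusp (defined, as in \cite[3.39.2]{Kol13}, by the shape of its minimal resolution) with the constructed one is exactly tautness of cusp singularities. That is a theorem of Laufer over $\C$, proved by cohomological vanishing arguments that are not available in positive characteristic, and no such result is in the literature in characteristic $p$. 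Without it, your argument proves $F$-liftability only for those cusps that happen to arise from the toroidal construction. Second, the descent along the infinite $\Z$-cover is not carried out: Theorem \ref{thm:finite descent} does not apply, and the cover does not even exist as a morphism of varieties --- it only exists after completing along the exceptional locus, so one must (i) form the quotient of the formal completion of the infinite toric surface along its boundary chain by the free $\langle M\rangle$-action, (ii) descend the $M$-equivariant monomial splitting of the Cartier operator to this quotient formal scheme, and (iii) pass from a splitting over the completion of $Y$ along $E$ back to the singularity itself, say via the theorem on formal functions combined with the reflexive-hull argument of Proposition \ref{lem:birational F-lift descend} and the fact (remark following Corollary \ref{cor:F-lift to F-pure}) that $F$-liftability can be tested on $\sO_{X,P}^{\wedge}$. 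To your credit, Step 1 is sound, the $\mathrm{GL}_2(\Z)$-equivariance of the canonical toric splitting is correct, and your strategy is the natural hyperbolic analogue of Lemmas \ref{lem:cone} and \ref{lem:simple elliptic}; but as it stands it is a program with an unflagged hypothesis (tautness) and an acknowledged missing step, not a proof.
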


\begin{lem}\label{lem:lc rational}
    Assume Conjecture \ref{conj}.
    Then a rational lc singularity $(P\in X)$ of type $(2,2,2,2)$ \cite[3.39.3]{Kol13} and of type $(3,3,3)$, $(2,4,4)$, $(2,3,6)$ \cite[3.39.4]{Kol13} 
    is $F$-liftable if it is $F$-pure.
\end{lem}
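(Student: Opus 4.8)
The plan is to reduce these rational log canonical singularities of type $(2,2,2,2)$, $(3,3,3)$, $(2,4,4)$, $(2,3,6)$ to singularities whose $F$-liftability we have already established, via a finite cover of degree prime to $p$. According to the classification \cite[3.39.3--3.39.4]{Kol13}, these singularities admit a canonical cover or a quotient presentation: each such $(P\in X)$ is a finite quotient of a simple elliptic or cusp singularity by a finite group whose order divides the numbers $(2,2,2,2)$, $(3,3,3)$, $(2,4,4)$, or $(2,3,6)$. More precisely, the index one cover $g\colon (P'\in X')\to (P\in X)$ is a simple elliptic singularity (when the cover is elliptic) or a cusp singularity (when the cover is degenerate), and the degree of $g$ equals the index of $K_X$, which is a divisor of $\max(2,3,4,6)$.

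First I would invoke the $F$-purity hypothesis on $(P\in X)$ to constrain the characteristic: since $(P\in X)$ is $F$-pure, the characteristic $p$ cannot divide the index of $K_X$. This is the key input that makes the covering degree prime to $p$. Concretely, $F$-purity of a rational lc singularity forces $p$ to avoid the exceptional small primes attached to the relevant type (by the $F$-purity criteria in \cite{Hara-Watanabe}, analogous to the use in Lemma \ref{lem:Dihedral}), so that $\gcd(p, \deg g) = 1$. Second, I would identify the cover $X'$: by the structure theory in \cite[3.39]{Kol13}, the canonical cover $X'$ is either a simple elliptic singularity or a cusp singularity. In the simple elliptic case, $X'$ is $F$-liftable by Lemma \ref{lem:simple elliptic} (here I must also verify that $X'$ inherits $F$-purity from $X$, which follows since $F$-purity descends along finite covers of degree prime to $p$, or is automatic for simple elliptic singularities that are $F$-pure). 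In the cusp case, $X'$ is $F$-liftable by the assumed Conjecture \ref{conj}.

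Third, with $X'$ established as $F$-liftable and the covering degree prime to $p$, I would apply the Finite descent theorem (Theorem \ref{thm:finite descent}, or its unboundaried version Corollary \ref{cor:finite descent}) to conclude that $(P\in X)$ itself is $F$-liftable. Since we are in the case $B=0$, Corollary \ref{cor:finite descent} applies directly once the cover and its degree are identified.

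The hard part will be verifying precisely which cover $X'$ arises for each of the four rational types and confirming that $X'$ is indeed simple elliptic or cusp (rather than some other singularity), together with the numerical check that $p\nmid \deg g$ follows from $F$-purity in each case. The type $(2,2,2,2)$ is the generic case where the cover tends to be simple elliptic, while the triangle-group types $(3,3,3)$, $(2,4,4)$, $(2,3,6)$ require care because the covering group is a specific finite subgroup and the elliptic curve $E$ appearing on the exceptional locus carries an automorphism of the corresponding order; one must ensure this forces $E$ to be ordinary (hence the cone over it is $F$-liftable by Lemma \ref{lem:cone}) precisely when $(P\in X)$ is $F$-pure. Matching the $F$-purity condition to ordinariness of the elliptic cover, and ruling out the supersingular/non-$F$-pure exceptional characteristics, is the main technical obstacle.
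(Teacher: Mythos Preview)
Your proposal is correct and follows the same route as the paper: take the index one cover (of degree $2$, $3$, $4$, or $6$ according to type, hence prime to $p$ by \cite[Theorem 1.2]{Hara(two-dim)} once $F$-purity is assumed), identify the cover as simple elliptic or cusp, apply Lemma \ref{lem:simple elliptic} or Conjecture \ref{conj}, and descend via Theorem \ref{thm:finite descent}. The ``hard part'' you anticipate largely dissolves: since the index one cover is Gorenstein, lc, and not klt by \cite[Proposition 2.50]{Kol13}, the surface classification forces it to be simple elliptic or cusp uniformly, with no type-by-type analysis of covers or elliptic automorphisms needed.
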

\begin{proof}
    By \cite[Chapter 3 (3.3.4)]{Flips-abundance}, if $(P\in X)$ is of type $(2,2,2,2)$ (resp.~$(3,3,3)$, $(2,4,4)$, $(2,3,6)$), then 
    the Gorenstein index of $(P\in X)$ is $2$ (resp.~$3,4,6$).
    Thus, if $(P\in X)$ is $F$-pure, then the Gorenstein index is prime to $p$ by \cite[Theorem 1.2]{Hara(two-dim)}.
    Let $(P'\in X')\to (P\in X)$ be the index one cover \cite[Definition 2.49]{Kol13}.
    Then $(P'\in X')$ is Gorenstein and lc, but not klt \cite[Proosition 2.50]{Kol13}.
    Thus $(P'\in X')$ have to be a simple elliptic singularity or a cusp singularity, which is $F$-liftable by Lemma \ref{lem:simple elliptic} and Conjecture \ref{conj}.
    Now, we conclude by Theorem \ref{thm:finite descent}.
\end{proof}

\begin{prop}\label{prop:F-pure to F-lift(non-plt)}
    Assume Conjecture \ref{conj}.
    Let $(P\in X,B)$ be a surface singularity such that $B$ is reduced.
    Suppose that $(P\in X,B)$ is not plt.
    Then $(P\in X,B)$ is $F$-liftable if and only if it is $F$-pure.
\end{prop}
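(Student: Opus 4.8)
The plan is to reduce $(P\in X,B)$ to one of the explicitly classified non-plt lc surface singularities and then invoke the corresponding lemma already proved above. First I would observe that the ``only if'' direction is Corollary \ref{cor:F-lift to F-pure}, so it suffices to prove that $F$-purity implies $F$-liftability. Assume therefore that $(P\in X,B)$ is $F$-pure and not plt. The key structural input is the classification of $F$-pure (equivalently, lc) surface singularities in \cite[Section 3.3]{Kol13} together with \cite[Section 7.B]{Gra}: since $F$-purity forces lc, and a non-plt lc surface singularity is either an lc (but non-canonical, non-plt) singularity falling into the Dihedral, simple elliptic, cusp, or rational lc $(2,2,2,2)/(3,3,3)/(2,4,4)/(2,3,6)$ families, or else a canonical singularity (an RDP) that happens to be non-plt. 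The strategy is simply to run through this finite list of cases.

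The main body of the argument is thus a case division. If $B\neq 0$, then a non-plt lc surface pair with reduced boundary is a Dihedral quotient singularity, and Lemma \ref{lem:Dihedral} gives $F$-liftability from $F$-purity directly. If $B=0$, I would split further: a non-plt $F$-pure (hence lc) singularity is either canonical or strictly lc. In the canonical case it is an RDP, and Proposition \ref{prop:canonical} applies; here I must check that the excluded case $p=5$, $E_8^1$ of Proposition \ref{prop:canonical} cannot arise, which follows because that RDP is in fact \emph{not} $F$-pure in characteristic $5$ (so the hypothesis of the present proposition already rules it out), or alternatively because $E_8^1$ is klt, not strictly non-plt—either way the exceptional case is harmless here since $E_8^1$ in $p=5$ is not $F$-pure. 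In the strictly lc case the singularity is simple elliptic (Lemma \ref{lem:simple elliptic}), a cusp (Conjecture \ref{conj}, which we assume), or one of the rational lc types $(2,2,2,2),(3,3,3),(2,4,4),(2,3,6)$ (Lemma \ref{lem:lc rational}, which itself relies on Conjecture \ref{conj} through the cusp case). In each branch the cited result converts the assumed $F$-purity into the desired $F$-liftability.

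The step I expect to be the most delicate is not any single computation but rather verifying that the classification genuinely exhausts the non-plt $F$-pure cases and matching each classification type to the right lemma, in particular confirming that every non-plt lc surface singularity with $B\neq0$ is Dihedral and that the $B=0$ strictly-lc cases are precisely the four families handled by Lemmas \ref{lem:simple elliptic} and \ref{lem:lc rational}. One must also be careful that $F$-purity is used consistently: it both guarantees we are in the lc world (so the classification applies) and supplies the hypothesis each lemma needs, and in the rational lc cases it additionally forces the Gorenstein index to be prime to $p$ so that the index-one cover argument of Lemma \ref{lem:lc rational} goes through. The dependence on Conjecture \ref{conj} enters exactly through the cusp and rational-lc branches, which is why the proposition is stated conditionally on it. Once the case analysis is laid out, each case is a one-line citation, so the proof is short; the real content is the bookkeeping of the classification.
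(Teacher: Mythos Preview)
Your overall strategy---reduce to the classification of non-plt lc surface singularities and cite the corresponding lemma in each case---is exactly the paper's approach, but your case analysis has a genuine gap and a factual error.

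The gap is that you omit the non-plt \emph{cyclic quotient} case \cite[3.40.1]{Kol13}. When $B\neq 0$, a non-plt lc surface pair with reduced boundary need not be Dihedral: it can also be a cyclic quotient singularity with two boundary components (the ``two-curve ends'' or nodal case). The paper's proof lists this explicitly as case (3) and handles it by Lemma \ref{lem:cyc quotient are toric}. Your claim that ``a non-plt lc surface pair with reduced boundary is a Dihedral quotient singularity'' is therefore false, and as written your argument does not cover all cases.

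The factual error is in your treatment of $E_8^1$ in $p=5$: you assert it is not $F$-pure, but it \emph{is} $F$-pure (see Table \ref{table:RDPs}); indeed this is precisely why Theorem \ref{Introthm:F-purity and F-liftability} singles it out as the unique $F$-pure plt singularity that is not $F$-liftable. Fortunately this error is harmless, because your ``canonical (RDP)'' branch is empty: an RDP with $B=0$ is klt, hence plt, so it cannot appear under the hypothesis that $(P\in X,B)$ is not plt. You should simply delete this branch rather than try to argue around $E_8^1$.

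Once you add the missing cyclic quotient case (handled by Lemma \ref{lem:cyc quotient are toric}) and remove the spurious RDP branch, the argument matches the paper's proof. The reference for ``$F$-pure $\Rightarrow$ lc'' used in the paper is \cite[Proposition 2.2 (b)]{BBKW}.
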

\begin{proof}
    It suffices to prove the if direction.
    In this case, $(P\in X,B)$ is lc by \cite[Proposition 2.2 (b)]{BBKW}.
    Here, we note that sharply $F$-purity is equivalent to $F$-purity if the boundary divisor is reduced.
    Since $(P\in X,B)$ is not plt, it suffices to consider the case of
    \begin{enumerate}
        \item a simple elliptic \cite[3.39.1]{Kol13}, a cusp \cite[3.39.1]{Kol13},
        \item a quotient of a simple elliptic or a cusp \cite[3.39.3 and 3.39.4]{Kol13}, 
        \item a cyclic quotient \cite[3.40.1]{Kol13}, and
        \item a Dihedral quotient \cite[3.40.2]{Kol13}.
    \end{enumerate}
    Then the case (1) (resp.~(2), (3), (4)) follows from Lemma \ref{lem:simple elliptic} and Conjecture \ref{conj} (resp.~Lemma \ref{lem:lc rational}, Lemma \ref{lem:cyc quotient are toric}, Lemma \ref{lem:Dihedral}).
\end{proof}

\subsection{Toward the \texorpdfstring{$F$}--liftability of cusp singularities}\label{subsec:cusp}

In this subsection, we prove the $F$-liftability of some hypersurface cusp singularities in characteristic $2$, which supports Conjecture \ref{conj}.

\begin{prop}\label{prop:cusp}
Suppose that $p=2$.
In the following case, $\Spec\,R/I$ is $F$-liftable.
\begin{enumerate}
\item[\textup{(1)}]
$R=k[x,y,z]$ and $I=(f)$, where 
\begin{equation}
\label{eq:hypsurfcusp}
f=x^a+y^b+z^c+xyz
\end{equation}
for $a,b,c \in \Z$ with $\frac{1}{a} + \frac{1}{b} + \frac{1}{c} < 1$.
\item[\textup{(2)}]
$R=k[x,y,z,w]$ and $I=(f,g)$, where
\begin{align}
\begin{aligned}
\label{eq:completeintersectioncusp}
&f=xy+z^a+w^b, \\ 
&g=zw+x^c+y^d
\end{aligned}
\end{align}
for $a, b, c, d \in \Z$ with $a, b, c, d \geq 2$ and at least one exponent $\geq 3$. 

\end{enumerate}
\end{prop}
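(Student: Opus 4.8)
The plan is to verify Zdanowicz's $F$-liftability criterion directly, in the sharpened form provided by Lemma \ref{lem:F-lift criterion} for the hypersurface case (1) and by Remark \ref{rem:F-lift ci} for the complete intersection case (2). In both cases the strategy is identical: exhibit an explicit splitting section $\sigma$ compatible with the defining ideal, compute $\Delta_1$ of the defining equations, and then certify the required ideal membership by writing down explicit cofactors, uniformly in the exponents $a,b,c,d$. Since $p=2$, two simplifications are available: $\Delta_1(h)$ of a polynomial $h=\sum_i M_i$ is simply the sum $\sum_{i<j}M_iM_j$ of pairwise products of its monomials, and all signs disappear, so that $(-\sigma(\cdots))^p=\sigma(\cdots)^2$.

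For the splitting section I would take $u\in\Hom_R(F_{*}R,R)$ to be the trace projecting $F_{*}R$ onto the monomial all of whose exponents equal $p-1=1$ --- the same map already used for the $D_{2n}^{n-1}$ and $D_{2n+1}^{n-1}$ families in Proposition \ref{prop:canonical} --- and set $\sigma(F_{*}r)=u(F_{*}(fr))$ in case (1) and $\sigma(F_{*}r)=u(F_{*}(fg\,r))$ in case (2). The key structural feature making this work is that the interior mixed monomial $xyz$ (resp.\ $xyzw$) occurs in $f$ (resp.\ in $fg$) with coefficient $1$, so that $\sigma(1)=u(F_{*}f)=1$ (resp.\ $u(F_{*}(fg))=1$); the projection formula then gives $\sigma(F_{*}(fr))=f\,u(F_{*}r)\in(f)$ (resp.\ $\sigma(F_{*}(fr))\in(f)$ and $\sigma(F_{*}(gr))\in(g)$), so $\sigma$ is a splitting section compatible with the ideal. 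This is precisely the feature of cusp-type equations that I expect to drive the whole argument.

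With $\sigma$ fixed, I would compute $\Delta_1(f)$ --- the six pairwise products of the monomials $x^a,y^b,z^c,xyz$ --- together with $\sigma(F_{*}\Delta_1(f))=u(F_{*}(f\,\Delta_1(f)))$, retaining only those monomials all of whose exponents are odd, and then produce explicit cofactors placing $\Delta_1(f)+\sigma(F_{*}\Delta_1(f))^2$ inside $(f,f_x^2,f_y^2,f_z^2)$, reducing modulo $f$ via $xyz\equiv x^a+y^b+z^c$. For case (2) I would carry out the analogous computation for both $f$ and $g$, this time reducing modulo the complete intersection ideal $(f,g)$ via $xy\equiv z^a+w^b$ and $zw\equiv x^c+y^d$, and matching against the generators $(\partial f_i/\partial x_k)^2$ of Remark \ref{rem:F-lift ci}. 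In both parts the verification becomes a finite, if lengthy, polynomial identity once the cofactors are guessed, where the loosened criterion of Remark \ref{rem:fliftpccriteria} can be used to locate candidate cofactors via Fedder-type computations.

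The main obstacle is uniformity in the exponents. Unlike the sporadic RDPs, where a single Macaulay 2 run settles the matter, here $a,b,c,d$ range over infinitely many values, so I must find cofactors valid for all of them simultaneously, exactly as was done for the $D_n$ families in Proposition \ref{prop:canonical}. The bookkeeping is complicated by parity: both which monomials survive the trace $u$ and whether pure-power terms of the partials such as $\partial f/\partial z=cz^{c-1}$ vanish depend on the parities of $a,b,c,d$, so the membership certificate must be organized to be insensitive to these parities (or split into a small number of parity cases). Keeping the cofactors valid modulo $(f)$ --- respectively modulo $(f,g)$ --- while absorbing this parity dependence is where the real work lies.
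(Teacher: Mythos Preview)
Your proposal is correct and follows essentially the same approach as the paper: the same splitting section $\sigma(F_{*}r)=u(F_{*}(fr))$ via the trace onto all-odd-exponent monomials, the same computation of $\Delta_1$ as pairwise products, and the same parity case-split to certify membership in $(f,I_f^{[p]})$ (respectively to produce the $h_k$ of Remark \ref{rem:F-lift ci} in case (2)). The paper does exactly the case analysis you anticipate --- in case (1) splitting on the parities of $a,b,c$ with a few further sub-cases when small exponents force $a=3$ or $c=2$, and in case (2) on the parities of $(a,b,c,d)$ --- so you have identified both the method and its main bookkeeping burden correctly.
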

\begin{proof}
First, we prove the case (1).
\begin{eqnarray*}
\Delta_1(f) &=& x^ay^b+x^az^c+y^bz^c+xyz(x^a+y^b+z^c)\\
&\equiv& x^ay^b+x^az^c+y^bz^c+ (xyz)^2 \mod (f).
\end{eqnarray*}
Moreover, by using the splitting $\sigma$ defined by
\[
\sigma (F_* r)= u (F_{*}rf)
\]
(we use the same notation as in the proof of Proposition \ref{prop:canonical}),
we have
\begin{eqnarray*}
&\Delta_1 (f) + (-\sigma (F_{\ast} \Delta_1 (f)))^p\\
\equiv&  
\begin{cases}
x^ay^b+x^az^c+y^bz^c+ x^{a-1}y^{b-1}z^{c-1} \mod (f) \textup{ if }a,b,c \textup{ are odd,} \\
x^ay^b+x^az^c+y^bz^c \mod (f) \textup{ otherwise.}
\end{cases}
\end{eqnarray*}
by the direct computation.  
By Lemma \ref{lem:F-lift criterion} (or Theorem \ref{thm:Zdanowicz Critierion}), it suffice to show 
\[
\Delta_1 (f) + (-\sigma (F_{\ast} \Delta_1 (f)))^p \in (f, I_f^{[p]}).
\]
Note that $I_f^{[p]}$ is generated by $f_x^2 =ax^{2a-2} + y^2z^2, f_y^2=by^{2b-2}+x^2z^2, f_z^2=cz^{2c-2}+x^2y^2$.

Note that, when $a,b,c$ are even, the desired result is obvious by Theorem \ref{thm:Zdanowicz Critierion} since $x^ay^b+x^az^c+y^bz^c$ has the square root in $R$.
Therefore, we consider the case where $a,b,c$ are odd firstly.
Since
\begin{eqnarray*}
x^ay^b &=& x^{a-2} y^{b-2} (f_z^2-cz^{2c-2}) \\
&\equiv& x^{a-2} y^{b-2} z^{2c-2} \mod (f, I_f^p),
\end{eqnarray*}
we have 
\begin{eqnarray*}
&&x^ay^b+x^az^c+y^bz^c+ x^{a-1}y^{b-1}z^{c-1} \\
\equiv&& x^{a-2}y^{b-2}z^{c-2} (x^a+y^b+z^c+xyz) \\
\equiv && 0 \mod (f, I_f^{[p]}).
\end{eqnarray*}
Therefore, in this case, we have the desired result.

Next, we consider the case where $a,b$ are odd and $c$ is even.
In this case, by the similar computation as above,
we have
\begin{eqnarray*}
&&x^ay^b+x^az^c+y^bz^c \\
\equiv && x^az^c+y^bz^c \\
\equiv&& x^{a-2}y^{b-2}z^{c-2} (x^a+y^b)  \mod (f, I_f^{[p]})
\end{eqnarray*}
If we have $a,b \geq 4$, then we have
\[
x^{a-2}y^{b-2} \in f_z^p,
\]
so we get the desired result.
Therefore, we may assume $a =3.$
If we have $c \geq 4$, then 
\begin{eqnarray*}
&& x^{2a-2} y^{b-2} z^{c-2} \\
\equiv && x^{2a-4} y^{3b-4} z^{c-4} \\
\equiv && 0  \mod (f, I_f^{[p]}).
\end{eqnarray*}
Similarly, we have $x^{a-2} y^{2b-2} z^{c-2} \equiv 0$, so we get the desired result.
The remaining case is the case where $a=3, c=2$ and $b \geq 7$.
In this case, since we have 
\[
x^{2a-2}y^{b-2}z^{c-2} \in (f_z^p),
\]
it suffices to show that
\[
x^{a-2}y^{2b-2}z^{c-2} = xy^{2b-2} \in (f, I_f^{[p]}).
\]
We can show this as the following:
\begin{eqnarray*}
xy^{2b-2} &\equiv& xy^{b-2} (x^3+z^2+xyz)\\
&\equiv&  xy^{b-2} z^2 \\
&\equiv&  x^{5}y^{b-4}\\
&\equiv& 0 \mod (f, I_f^{[p]}).
\end{eqnarray*}

Finally, we consider the case where $a$ is odd and $b,c$ are even.
In this case, we have
\begin{eqnarray*}
&&x^ay^b+x^az^c+y^bz^c \\
\equiv && y^b z^c \\
\equiv&& x^{2a-2}y^{b-2}z^{c-2}  \mod (f, I_f^{[p]}).
\end{eqnarray*}
Since $\frac{1}{a} + \frac{1}{b} + \frac{1}{c} <1$, at least one of $b,c$ is grater than $3$.
Therefore, the last term is contained in $(f_y^p, f_z^p)$, and it finishes the proof of (1).

Next, we consider the case (2).
To find $g_1, g_2$ (or $h_1, h_2$) as in Remark \ref{rem:F-lift ci}, we actually used the splitting as in Remark \ref{rem:F-lift ci} in a similar way to the proof of (1).
However, to complete the proof, we don't need to state how to find them. Therefore, we omit the description of the splinter in the following, and we only use \cite[Theorem 4.10]{Zda18} here.
In a similar way to the proof of (1), we have
\begin{align*}
\Delta_1(f) & \equiv z^aw^b + x^2y^2  \mod (f) \\
\Delta_1(g) & \equiv x^cy^d + z^2w^2  \mod (g).
\end{align*}
Therefore, by \cite[Theorem 4.10]{Zda18} (see Remark \ref{rem:F-lift ci}), it suffices to show that there exist $g_1, g_2, h_1, h_2, h_3, h_4 \in R$ such that 
\begin{eqnarray*}
&&\left( \begin{array}{c}
z^aw^b \\
x^cy^d
\end{array}
\right)
+
\left(
\begin{array}{c}
g_1^p\\
g_2^p
\end{array}
\right)\\
=&&
h_1 
\left( \begin{array}{c}
y^2 \\
cx^{2c-2}
\end{array}
\right)
+h_2
\left( \begin{array}{c}
x^2 \\
dy^{2d-2}
\end{array}
\right)
+h_3
\left( \begin{array}{c}
az^{2a-2} \\
w^2
\end{array}
\right)
+ h_4
\left( \begin{array}{c}
bw^{2b-2} \\
z^2
\end{array}
\right) \mod (f,g)
\end{eqnarray*}

We divide into cases according to the parity of $(a,b,c,d)$.
If $(a,b,c,d) \equiv (0,0,0,0) \mod 2$, it is obvious.
First, consider the case where $(a,b,c,d) \equiv (1,1,1,1)$.
In this case, we have
\[
z^a w^b \equiv z^{a-2} w^{b-2} (x^{2c}+y^{2d}) \mod (g)
\]
and
\[
z^{a-2} w^{b-2} y^{2d-2}
\left( \begin{array}{c}
y^2 \\
cx^{2c-2}
\end{array}
\right)
+ z^{a-2} w^{b-2} x^{2c-2}
\left( \begin{array}{c}
x^2 \\
dy^{2d-2}
\end{array}
\right)
=
\left( \begin{array}{c}
z^{a-2} w^{b-2} (x^{2c} +y^{2d}) \\
0
\end{array}
\right).
\]
By combining with a  similar computation for $x^{c}y^{d}$, we have the desired result.

Next, we consider the case where $(a,b,c,d) \equiv (1,1,1,0)$.
In this case, we have
\[
\left( \begin{array}{c}
z^aw^b +z^{a-1}w^{b-1}y^d \\
x^cy^d
\end{array}
\right)
\equiv
\left( \begin{array}{c}
z^{a-1}w^{b-1}(x^c+y^d+y^d) \\
x^cy^d
\end{array}
\right)
=
\left( \begin{array}{c}
z^{a-1}w^{b-1}x^c\\
x^cy^d
\end{array}
\right) \mod (g),
\]
and the last vector can be modified to 
\[
\left( \begin{array}{c}
0\\
x^cy^d
\end{array}
\right)
\]
by using $^{t}\!(f_y^p, g_y^p) = ^{t}\!(x^2, 0)$.
By the same computation for $x^cy^d$ as in the case where $(a,b,c,d) \equiv (1,1,1,1)$, we have the desired result.

Next, we consider the case where $(a,b,c,d) \equiv (1,0,1,0)$.
In this case, we have
\[
 \left( \begin{array}{c}
z^aw^b \\
x^cy^d
\end{array}
\right)
\equiv
\left( \begin{array}{c}
z^{a-1}w^{b-1}(x^c+y^d+y^d) \\
x^cy^d
\end{array}
\right)
\equiv
\left( \begin{array}{c}
z^{a-1}w^{b-1}(x^{c}+y^{d}) \\
x^cy^d
\end{array}
\right) \mod (g).
\]
By using $^{t}\!(f_y^p, g_y^p) = ^{t}\!(x^2, 0)$ and $^{t}\!(f_x^p, g_x^p) = ^{t}\!(y^2, x^{2c-2})$, the last term can be modified to
\[
\left( \begin{array}{c}
0 \\
x^cy^d + z^{a-1}w^{b-1}x^{2c-2}y^{d-2}
\end{array}
\right),
\]
which can be modified further to
\[
\left( \begin{array}{c}
0 \\
x^cy^d
\end{array}
\right)
\]
by using $^{t}\!(f_w^p, g_w^p) = ^{t}\!(0, z^2)$ since $a \geq 3.$
By the same computation for $x^cy^d$, we have the desired result.

Finally, we consider the case where $(a,b,c,d) \equiv (1,0,0,0)$.
In this case,
the same method as in the case where $(a,b,c,d \equiv (1,0,1,0)$ works.
It finishes the proof.
\end{proof}

\begin{rem}\,
\label{rem:cuspsingularity}
\begin{enumerate}
\item
In characteristic zero, the equations in Proposition \ref{prop:cusp} give all the complete intersection cusp singularities by \cite[Theorem 4]{Kar}.

\item 
By the direct computation of blow-ups, we can confirm that the dual graph of minimal resolution of a singularity defined by equations (\ref{eq:hypsurfcusp}) and (\ref{eq:completeintersectioncusp}) in characteristic $p >0$ is the same as in characteristic $0$.
In particular, they have cusp singularities.
\item
By using a computer algebra system, we can confirm the $F$-liftability of these equations in characteristic $p$ other than $2$.
For example, we can check that the equation (\ref{eq:hypsurfcusp}) is $F$-liftable if $p \leq 19$ and $a,b,c \leq 30$.
\end{enumerate}
\end{rem}

\section{Proof of Theorem \ref{Introthm:LET}}
In this section, we apply Theorem \ref{Introthm:F-purity and F-liftability} to prove Theorem \ref{Introthm:LET}.

\begin{proof}[Proof of Theorem \ref{Introthm:LET}]
First, we reduced the case where $(X,B)$ is dlt.
Suppose that $(X,B)$ is not dlt.
Let $h\colon (W, B_W\coloneqq h^{-1}_{*}B+\Exc(h))\to (X,B)$ be a dlt blow-up  (see \cite[Definition 4.3 and Lemma 4.4]{Kaw3} for example).
\begin{cln}\label{cln:dlt blow-up}
    The Cartier index of $K_W+B_W$ is not divisible by $p$.
\end{cln}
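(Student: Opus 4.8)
The plan is to deduce the claim entirely from the corresponding statement on $X$, exploiting that a dlt blow-up is crepant. Since $(X,B)$ is $F$-pure it is log canonical (as in the references already invoked above), so $K_X+B$ is $\Q$-Cartier and the dlt blow-up $h\colon(W,B_W)\to(X,B)$ satisfies the crepant identity $K_W+B_W=h^{*}(K_X+B)$ with $B_W=h^{-1}_{*}B+\Exc(h)$. The key point I would isolate first is that, because $(X,B)$ is $F$-pure, the Cartier index $m$ of $K_X+B$ is prime to $p$.

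Granting this, the claim becomes formal. Writing $m$ for the Cartier index of $K_X+B$, the divisor $m(K_X+B)$ is Cartier, and pulling back along $h$ shows that $m(K_W+B_W)=h^{*}\bigl(m(K_X+B)\bigr)$ is again Cartier, since the pullback of a Cartier divisor along any morphism is Cartier. Hence the Cartier index of $K_W+B_W$ divides $m$, and in particular is not divisible by $p$. Concretely I would organize the argument in three steps: (i) record that $(X,B)$ is lc and that $h$ is crepant, so that the equality $K_W+B_W=h^{*}(K_X+B)$ holds as an honest identity of $\Q$-divisors; (ii) establish $p\nmid m$ from the $F$-purity of $(X,B)$; (iii) pull back $m(K_X+B)$ and read off that the index of $K_W+B_W$ divides $m$.

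The only nonformal input is step (ii): that $F$-purity of the surface pair $(X,B)$ forces the index of $K_X+B$ to be prime to $p$. For $B=0$ (and for reduced boundary) this is exactly the content of \cite[Theorem 1.2]{Hara(two-dim)} and \cite{Hara-Watanabe}, which are used repeatedly earlier; for a $\Q$-divisor $B$ the same conclusion follows because $F$-purity provides a splitting of $\sO_X\to F^{e}_{*}\sO_X(\lceil(p^{e}-1)B\rceil)$ for a suitable $e$, and the existence of such a splitting forces $(p^{e}-1)(K_X+B)$ to be Cartier, whence $m\mid p^{e}-1$ is automatically coprime to $p$. The main thing to verify with care is therefore precisely that the splitting characterizing $F$-purity yields an $e$ with $(p^{e}-1)(K_X+B)$ Cartier; once this is in place, the remainder of the proof of the claim is the direct pullback computation of step (iii).
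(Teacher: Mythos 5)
Your steps (i) and (iii) are fine, and working downstairs on $(X,B)$ rather than on the dlt blow-up is a legitimate idea in principle; but step (ii), which you correctly single out as the only nonformal input, contains a genuine gap, and in fact the statement you assert there is false in the generality in which you assert it. First, the mechanism is wrong: a splitting of $\sO_X\to F^e_*\sO_X(\lceil(p^e-1)B\rceil)$ does \emph{not} force $(p^e-1)(K_X+B)$ to be Cartier. By the standard correspondence between $p^{-e}$-linear maps and divisors, such a splitting produces an effective divisor $\Delta'\geq B$ (the divisor of the splitting map) with $(p^e-1)(K_X+\Delta')\sim 0$; this constrains $\Delta'$, which is in general strictly larger than $B$, and says nothing about the index of $K_X+B$ itself. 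Second, the conclusion fails: $(\mathbb{A}^2,\tfrac{1}{p}L)$ with $L$ a line is ($F$-pure and even sharply) $F$-pure, yet $K_X+\tfrac{1}{p}L$ has Cartier index exactly $p$; worse, already with $B=0$, the cyclic quotient (Veronese cone) singularity $\tfrac{1}{p}(1,1)$ in characteristic $p>2$ is strongly $F$-regular (it is a graded direct summand of $k[x,y]$), while $K_X$ has Cartier index exactly $p$. The latter example also shows that your fallback appeal to \cite[Theorem 1.2]{Hara(two-dim)} and \cite{Hara-Watanabe} is a misreading: those classifications do \emph{not} say that every $F$-pure surface singularity has index prime to $p$ (that is false); they only pin down, for the specific non-klt types, in which characteristics $F$-purity can occur.

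This is why the paper's argument cannot be, and is not, a soft pullback argument: it uses essentially that $(X,B)$ is \emph{not} dlt, so that the relevant points are of the special types in the classification of lc surface singularities (\cite[3.39--3.40]{Kol13}, \cite[7.B]{Gra}). The paper transfers $F$-purity to $(W,B_W)$ via crepancy and \cite[Lemma 2.7]{GT16}, and then argues case by case on Graf's list: in the log resolution cases the claim is immediate; in the $(-2)$-curve contraction cases $F$-purity forces $p\neq 2$ by \cite{Hara(two-dim)} and \cite{Hara-Watanabe}, while every $\Z$-divisor on $W$ has index at most $2$; and in the type $(3,3,3)$, $(2,4,4)$, $(2,3,6)$ cases $F$-purity forces the index to be prime to $p$ by \cite{Hara(two-dim)}. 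Your outline never invokes the non-dlt hypothesis, which is the red flag: the blanket implication ``$F$-pure $\Rightarrow$ index of $K_X+B$ prime to $p$'' on which your plan rests does not hold, and any correct implementation of your strategy (whether carried out on $X$ or on $W$) must go through this classification-based case analysis.
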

\begin{proof}
    Since $K_W+B_W=h^{*}(K_X+B)$, it follows that $(W, B_W)$ is $F$-pure by \cite[Lemma 2.7]{GT16}.
    From now, we divide the cases according to (7.8.1)--(7.8.7) in \cite[7.B]{Gra}.
    \begin{itemize}
        \item \textbf{(7.8.1), (7.8.2), and (7.8.5) cases.}\,\,In this case, $(W,B_W)$ is a log resolution.
        \item \textbf{(7.8.3) and (7.8.6) cases.}\,\,
        Since $(W, B_W)$ is $F$-pure, we have $p\neq 2$ by \cite[Theorem 1.2 (3)-(vi) and (4)]{Hara(two-dim)} and \cite[Theorem 4.5 (2)]{Hara-Watanabe}. Then every $\Z$-divisor on $W$ has Cartier index at most two since $W$ is obtained by contracting $(-2)$-curves of a smooth surface.
        \item \textbf{(7.8.4) case.}\,\,In this case, $B=0$, and the singularity is a rational lc singularity of type $(3,3,3)$, $(2,3,6)$, or $(2,4,4)$. 
        Since $W$ is $F$-pure, the Cartier index of $K_W$ is not divisible by $p$ by \cite[Theorem 1.2 (3)-(iv) and (3)-(v)]{Hara(two-dim)}.
    \end{itemize}
    Now, we have covered all the cases (recall that we assumed that $(X,B)$ is not dlt), and we obtain the claim.
\end{proof}

Combining Claim \ref{cln:dlt blow-up} with the claim in \cite[Proof of Proposition 4.8]{Kaw3}, the assertion is reduced to $(W,B_W)$, and thus we may assume that $(X,B)$ is dlt. 

If $p\neq 5$, or $p=5$ and there does not exist an RDP $(P\in X)$ of type $E_8^1$,
then $(X,B)$ is locally $F$-liftable by Theorem \ref{Introthm:F-purity and F-liftability}, and thus the assertion follows from Theorem \ref{thm:F-lift}.

Suppose that $p=5$ and $X$ has an RDP $(P\in X)$ of type $E_8^1$.
Since the assertion is local on $(X,B)$, we may assume that $B=0$ and $P\in X$ is the only singular point of $X$.
By shrinking $X$ further if necessary, we can assume that the class group $\mathrm{Cl}(X)=0$ by \cite[Section 24]{Lipman} (see also \cite[Table 2]{LMM2}), and in particular $D=0$.
In this case, we can conclude the assertion from \cite[Theorem 1.1 (ii)]{Hirokado} (see also Theorem \ref{thm:RDPs}).
\end{proof}

\section{Proof of Theorem \ref{Introthm:BSV}}
In this section, we prove Theorem \ref{Introthm:BSV}.
Applying Theorem \ref{Introthm:LET}, we first prove Akizuki-Nakano vanishing for globally $F$-split surface pairs. Then using the minimal model program, we deduce Bogomolov-Sommese vanishing for globally $F$-split surface pairs.

\begin{lem}\label{lem:MFS}
Let $(X,B)$ be a globally $F$-split projective surface pair over an algebraically closed field of positive characteristic such that $B$ is reduced.
Let $D$ be a $\Z$-divisor on $X$ and $\phi\colon X\to Z$ be a projective surjective morphism to a smooth curve $Z$
such that 
\begin{enumerate}
 \item[\textup{(1)}] $\phi_{*}\sO_X=\sO_Z$
 \item[\textup{(2)}] $D\cdot F>0$ for a general fiber $F$ of $\phi$, and 
 \item[\textup{(3)}] $-(K_X+B)\cdot F\geq 0$ for a general fiber $F$.
\end{enumerate}
Then 
\[
\phi_{*}\Omega_X^{[1]}(\log\,B)(-D)=0.
\]
\end{lem}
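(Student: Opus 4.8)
The plan is to prove that the reflexive sheaf $\mathcal{F}:=\Omega_X^{[1]}(\log B)(-D)$ has vanishing pushforward by establishing two things simultaneously: that $\phi_{*}\mathcal{F}$ is torsion-free and that its rank at the generic point of $Z$ is zero. Since $\mathcal{F}$ is reflexive it is in particular torsion-free, and because $\phi$ is dominant any torsion section of $\phi_{*}\mathcal{F}$ would pull back to a torsion section of $\mathcal{F}$; hence $\phi_{*}\mathcal{F}$ is a torsion-free coherent sheaf on the smooth curve $Z$, and it suffices to show it vanishes at the generic point $\eta\in Z$. Writing $K:=k(Z)$ and letting $X_\eta$ be the generic fibre, I must therefore show $H^0(X_\eta,\mathcal{F}|_{X_\eta})=0$; by flat base change this is exactly the generic stalk of $\phi_{*}\mathcal{F}$.

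First I would verify that a general fibre $F$ of $\phi$ is a smooth projective curve meeting $\lfloor B\rfloor$ transversally and avoiding the non-log-smooth locus of $(X,B)$, so that on $F$ the sheaf $\mathcal{F}|_F=\Omega_X^1(\log B)|_F(-D|_F)$ is a rank-two vector bundle. The relative cotangent sequence, together with $\phi^{*}\Omega_Z^1|_F\cong\sO_F$ and the adjunction $(K_X+B)|_F=K_F+B_F$ (both using that $F$ is a fibre, so $F^2=0$), yields the exact sequence
\[
0\to \sO_F(-D|_F)\to \mathcal{F}|_F\to \Omega_F^1(\log B_F)(-D|_F)\to 0 .
\]
I then compute degrees on the curve $F$: hypothesis (2) gives $\deg\sO_F(-D|_F)=-D\cdot F<0$, while hypotheses (2) and (3) give $\deg\Omega_F^1(\log B_F)(-D|_F)=(K_X+B)\cdot F-D\cdot F<0$. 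Both are line bundles of negative degree on $F$, hence have no global sections, and the long exact sequence forces $H^0(F,\mathcal{F}|_F)=0$. Carrying out the identical computation on the generic fibre $X_\eta$ over $K$ in place of a general closed fibre then gives $H^0(X_\eta,\mathcal{F}|_{X_\eta})=0$, which completes the argument.

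The step I expect to be the main obstacle is guaranteeing that the general fibre is genuinely smooth, i.e. that $\phi$ is separable; in characteristic $p$ this can fail for inseparable (e.g. quasi-elliptic) fibrations, in which case $\Omega_F^1$ acquires torsion at the singular points of $F$ and the clean degree bound on the quotient breaks down. This is, I believe, precisely where the global $F$-splitting hypothesis enters: it descends along $\phi$ (using $\phi_{*}\sO_X=\sO_Z$) to a splitting of $Z$ and forces the general fibre of $\phi$ to be $F$-split, hence reduced and free of cuspidal singularities, which should reduce the problem to the separable case. My plan would therefore be to use the splitting to pass to the smooth-fibre situation, and in any residual singular-fibre case to run the degree computation on the normalization $\widetilde{F}$, where condition (3) forces $B$ to be $\phi$-vertical and the pulled-back conormal sub-line-bundle retains negative degree, again killing all global sections. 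The degree bookkeeping itself is routine; the separability point is the only genuine difficulty.
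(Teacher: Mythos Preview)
Your approach is essentially the paper's: both reduce to a rank computation via torsion-freeness, shrink $Z$, use the relative log cotangent sequence, and finish with a degree count on a general fibre $F$. The paper phrases the endgame as producing a global injection $\sO_X(D)\hookrightarrow\Omega_X^1(\log B)$ after shrinking and then restricting to $F$, while you restrict the exact sequence itself to $F$; these are equivalent.

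One point to sharpen. You correctly flag that global $F$-splitting is what guarantees smoothness of the general fibre, but the claim that $F$ meets $B$ transversally---equivalently, that every $\phi$-horizontal component of $B$ is generically \'etale over $Z$---is a separate positive-characteristic issue that does not follow from smoothness of $F$ alone. The paper handles this by invoking Ejiri's results \cite{Eji19} to conclude that the \emph{pair} $(F,B|_F)$ is globally $F$-split; since the boundary of an $F$-split pair is reduced, $B|_F$ is reduced, hence horizontal components of $B$ are generically \'etale over $Z$. Only then can one shrink $Z$ so that $(X,B)$ is log smooth over $Z$, which is what makes the sequence $0\to\phi^*\Omega_Z^1\to\Omega_X^1(\log B)\to\Omega_{X/Z}^1(\log B)\to 0$ exact on the left and identifies the quotient on $F$ with $\Omega_F^1(\log B_F)$. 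Your plan should make this step explicit rather than fold it into ``separability of $\phi$''.
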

\begin{proof}
Since $\phi_{*}\Omega_X^{[1]}(\log\,B)(-D)$ is torsion-free, it suffices to show that the rank of this sheaf is zero.
Thus the assertion is local on $Z$ and we can shrink $Z$ if necessary.
In particular, we may assume that $Z$ is affine, $X$ is smooth, and $B$ is Cartier.
By using \cite[Proposition 5.11]{Eji19} for $\phi$ and the structure morphisms, it follows that $(\phi, B)$ is $F$-split. 
Then \cite[Proposition 5.7]{Eji19} shows that a general fiber $F$ is normal and $(F, B|_{F})$ is globally $F$-split.
Since $F$ is globally $F$-split, $(1-p)K_F$ is linearly equivalent to an effective divisor. Thus, $F$ is the projective line or an elliptic curve.
Since $(F, B|_{F})$ is globally $F$-split and $\deg(-(K_F+B|_{F}))\geq 0$, one of the following holds.
\begin{itemize}
    \item $F\cong\PP_k^1$ and $B|_F$ is zero, a point, or two distinct points. 
    \item $F$ is an elliptic curve and $B|_F=0$.
\end{itemize}
In particular, every $f$-horizontal prime divisor $C \subseteq \Supp B$ is generically \'etale over $Z$ in each case, and by shrinking $Z$, we can assume that $(X,B)$ is simple normal crossing over $Z$.
Now, the proof is essentially same as \cite[Lemma 4.11]{Kaw3}, but we include the proof for the completeness. 

Since $Z$ is affine, we have
\[\phi_{*}\Omega_X^{[1]}(\log\,B)(-D)=H^0(X, \Omega_X^{[1]}(\log\,B)(-D)).\]
Suppose by contradiction that $H^0(X, \Omega_X^{[1]}(\log\,B)(-D))\neq 0$.
Then there exists an injective $\sO_X$-module homomorphism
\[
s\colon \sO_X(D)\hookrightarrow \Omega_X^{[1]}(\log\,B).
\]
Since $(X,B)$ is simple normal crossing over $Z$, we have the following horizontal exact sequence:
\begin{equation*}
\xymatrix{ & & \sO_{X}(D) \ar@{.>}[ld] \ar[d]^{s} \ar[rd]^{t} &\\
                0\ar[r] &\sO_X(f^{*}K_Z)\ar[r]   & \Omega^{[1]}_X(\log B) \ar[r]^{\rho}  & \Omega^{[1]}_{X/Z}(\log B) \ar[r] & 0.}
\end{equation*}
We refer to the proof of \cite[Lemma 4.11]{Kaw3} for the construction of the exact sequence. 
Set $t \coloneqq \rho \circ s$. 
Suppose that $t$ is nonzero. 
Then, by restricting $t$ to $F$, we have an injective $\sO_F$-module homomorphism \[t|_{F}\colon \sO_F(D|_F) \hookrightarrow 
\Omega^1_{F}(\log (B|_F))=\sO_F(K_F+B|_{F}),\] where the injectivity holds since $F$ is chosen to be general. 
Since $-(K_{X}+B)\cdot F \geq 0$,
we have 
\[
0<\deg (D|_F)\leq\deg(K_F+B|_F)\leq 0,
\]
a contradiction.
Thus $t$ is zero and the homomorphism $s$ induces an injection  $\sO_X(D)\hookrightarrow \sO_X(\pi^{*}K_Z)$.
By taking the restriction to $F$, we get
\[
0<\deg (D|_F)\leq\deg(f^*K_Z|_F)=0,
\]
which is again a contradiction. 
Therefore, we obtain the required vanishing.
\end{proof}

 We recall that a $\Z$-divisor $D$ on a normal projective surface $X$ is said to be \textit{nef} if $D\cdot C\geq0$ for every curve $C$ on $X$.

\begin{prop}[Akizuki-Nakano vanishing for globally $F$-split surface pairs]\label{prop:ANV}
Let $(X,B)$ be a globally $F$-split projective surface pair over an algebraically closed field of positive characteristic such that $B$ is reduced.
Let $D$ be a nef and big $\Z$-divisor on $X$.
Then 
\[
H^j(X, \Omega_X^{[i]}(\log\,B)(-D))=0
\]
for $i+j<2.$
\end{prop}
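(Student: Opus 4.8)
The plan is to treat the three pairs $(i,j)=(0,0),(0,1),(1,0)$ that make up the range $i+j<2$ separately; since $B$ is reduced, $\lfloor B\rfloor=B$ throughout. The cases with $i=0$ are short. For $(0,0)$, a nonzero section of $\sO_X(-D)$ would exhibit $-D$ as linearly equivalent to an effective divisor $E$, forcing $D^2=-D\cdot E\le 0$ because $D$ is nef; this contradicts $D^2>0$, which holds since $D$ is nef and big, so $H^0(X,\sO_X(-D))=0$. For $(0,1)$, I would invoke Kawamata--Viehweg-type vanishing for globally $F$-split surfaces: by Serre duality on the normal (hence Cohen--Macaulay) surface $X$, the group $H^1(X,\sO_X(-D))$ is dual to $H^1(X,\omega_X(D))$, and the vanishing of the latter for nef and big $D$ is exactly the Kawamata--Viehweg vanishing available for globally $F$-split varieties (cf. \cite{Kaw3}).

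The essential case is $(1,0)$, i.e. $H^0(X,\Omega_X^{[1]}(\log B)(-D))=0$. Here I would first apply the extension theorem. Using Theorem \ref{Introthm:LET} with the $\Z$-divisor $-D$ for a log resolution $\pi\colon Y\to X$ of $(X,B)$ identifies $\Omega_X^{[1]}(\log B)(-D)$ with $\pi_*\Omega_Y^{1}(\log\widetilde B)(-\pi^*D)$, where $\widetilde B\coloneqq\pi_*^{-1}B+\Exc(\pi)$ is a reduced simple normal crossing divisor on the smooth projective surface $Y$. Since taking global sections commutes with $\pi_*$, it suffices to prove
\[
H^0\bigl(Y,\Omega_Y^{1}(\log\widetilde B)(-N)\bigr)=0,\qquad N\coloneqq\pi^*D ,
\]
where $N$ is nef and big. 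Equivalently, $\Omega_Y^{1}(\log\widetilde B)$ must contain no saturated invertible subsheaf $\mathcal L$ with Iitaka dimension $\kappa(Y,\mathcal L)=2$.

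This is a logarithmic Bogomolov--Sommese vanishing on the smooth surface $Y$, and establishing it in positive characteristic is the main obstacle: in characteristic $p$ the log cotangent sheaf can contain a big invertible subsheaf, namely the $p$-closed subsheaf defining an inseparable or quasi-elliptic fibration, so a lifting hypothesis is indispensable. The input I would supply is that $(X,B)$, being globally $F$-split, admits a canonical lift to $W_2(k)$ compatible with $B$ (Section \ref{section:canonical lift}). With this liftability in hand I would run the Frobenius--Cartier dichotomy for the saturation $\mathcal L$: the inverse Cartier operator either reproduces an inclusion $\mathcal L^{\otimes p}\hookrightarrow\Omega_Y^{1}(\log\widetilde B)$, which upon iteration forces $\kappa(Y,\mathcal L)\le 1$, or presents $\mathcal L$ as a foliation whose existence is obstructed by the $W_2$-lift. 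This is precisely the mechanism of the characteristic-$p$ Bogomolov--Sommese theorem for liftable surfaces (compare Langer \cite{Langer24} and the surface arguments of \cite{Kaw3}).

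The delicate point, beyond Bogomolov--Sommese itself, is matching the liftability to the model on which the cotangent computation is performed: the canonical lift is a lift of $X$ rather than of the resolution $Y$, so one must either propagate liftability to $Y$ or, alternatively, run the dichotomy reflexively on $X$, gluing the local $F$-liftings by means of the global $W_2$-lift. The small characteristic cases, especially $p=2$, also demand surface-specific arguments, since the Deligne--Illusie decomposition is unavailable once $p\le\dim X$; I expect these to be the technically heaviest part of the proof.
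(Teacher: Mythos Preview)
Your outline correctly identifies the reduction to a log resolution via Theorem~\ref{Introthm:LET} as the opening move, and your treatment of $(i,j)=(0,0)$ matches the paper's. Your approach to $(0,1)$ via Serre duality and Kawamata--Viehweg for globally $F$-split varieties is a legitimate shortcut, though the paper instead handles $(0,1)$ and $(1,0)$ uniformly on the resolution.

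The genuine gap is precisely the point you flag as ``delicate'': you need $W_2(k)$-liftability of the pair $(Y,\widetilde B)$ on the resolution, and you do not supply it. The paper's solution is concrete and does not go through a Frobenius--Cartier dichotomy. It takes $f\colon Y\to X$ to be the \emph{minimal} log resolution and verifies, using the classification of lc surface singularities, that the log crepant boundary $B_Y\coloneqq f^*(K_X+B)-K_Y$ is effective. Since $(X,B)$ is globally $F$-split and $K_Y+B_Y=f^*(K_X+B)$, the pair $(Y,B_Y)$ is globally $F$-split by \cite[Lemma~2.7]{GT16}; in particular $Y$ itself is globally $F$-split. Then \cite[Lemma~1.10]{BBKW} furnishes a $W_2(k)$-lift of $(Y,f_*^{-1}B+\Exc(f))$. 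With this in hand, the vanishing on $Y$ for both $(1,0)$ and $(0,1)$ follows directly from the Akizuki--Nakano vanishing for $W_2(k)$-liftable log smooth surface pairs \cite[Theorem~2.11]{Kaw3}, which already covers all characteristics (including $p=2$) in the range $i+j<2$. So your worry about small $p$ and your proposed Cartier--foliation dichotomy are both bypassed: the missing idea is not an analytic argument on $Y$ but the propagation of global $F$-splitting from $X$ to $Y$ via effectivity of the minimal-log-resolution boundary.
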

\begin{proof}
If $(i,j)=(0,0)$, then the assertion follows from the bigness of $D$.
Thus, we may assume that $(i,j)=(0,1)$ or $(1,0)$.
Let $f\colon Y\to X$ be the minimal log resolution. Then $f^{*}(K_X+B)-K_Y$ is effective as follows:

Let $\pi\colon Z\to X$ be the minimal resolution. Then $\pi^{*}(K_X+B)-K_Z$ is always effective.
From the the classification of lc surface singularities (\cite[3.39, 3.40]{Kol13}), we can confirm that
every non-log smooth point of $(X', \pi_{*}B+\Exc(\pi))$ are nodal points of irreducible components
in $\pi^{*}(K_X+B)-K_Z$ with coefficient one.
Then taking the blow-up along the nodal points, we obtain the minimal log resolution $f\colon Y\to X$ and $f^{*}(K_X+B)-K_Y$ is effective. 

Since $B_Y$ is effective, the pair $(Y,B_Y)$ is globally $F$-split by \cite[Lemma 2.7]{GT16}, and in particular, $Y$ is globally $F$-split.
Then $(Y, f_{*}^{-1}B+\Exc(f))$ lifts to $W_2(k)$ by \cite[Lemma 1.10]{BBKW}.
By Theorem \ref{Introthm:LET}, we have
\[
H^0(X, \Omega_X^{[1]}(\log\,B)(-D))=H^0(Y, \Omega_Y^{1}(\log\,f_{*}^{-1}B+\Exc(f))(-f^{*}D)).
\]
By the Leray spectral sequence, 
we also have 
\[
H^1(X, \sO_X(-D))\hookrightarrow H^1(Y, \sO_Y(-f^{*}D)).
\]
Now, the assertion follows from Akizuki-Nakano vanishing for $W_2(k)$-liftable log smooth surface pairs \cite[Theorem 2.11]{Kaw3}.
\end{proof}

Now, we prove Theorem \ref{Introthm:BSV}.

\begin{proof}[Proof of Theorem \ref{Introthm:BSV}]
Since the assertion is obvious when $i=0$ or $2$, we may assume that $i=1$.
Since $(X, \lfloor B\rfloor)$ is globally $F$-split, replacing $B$ with $\lfloor B\rfloor$, we may assume that $B$ is reduced.
By \cite[Proposition 2.2]{BBKW}, the pair $(X, B)$ is lc and $-(K_X+B)$ is $\Q$-effective, i.e., there exists $m\in\Z_{>0}$ such that $-m(K_X+B)$ is linearly equivalent to an effective divisor.
Here, we recall that global sharply $F$-splitting of $(X,B)$ is equivalent to global $F$-splitting of $(X,B)$ since $B$ is reduced.

Let $f\colon Y\to X$ be a dlt blow-up with $B_Y=f^{-1}_{*}B+\Exc(f)$.
Since $f$ is a dlt blow-up and $(X,B)$ is lc, we have $(Y,B_Y)$ is dlt and $K_Y+B_Y=f^{*}(K_X+B)$ (see \cite[Definition 4.3 and Lemma 4.4]{Kaw3} for example).
In particular, $-(K_Y+B_Y)$ is $\Q$-effective, and thus $(Y,B_Y)$ is globally $F$-split by \cite[Lemma 2.7]{GT16}.

By Theorem \ref{Introthm:LET}, 
we have 
\[
H^0(X, \Omega_X^{[1]}(\log\,B)(-D))=H^0(Y, \Omega_Y^{[1]}(\log\,B_Y)(-f^{*}D)).
\]
Thus,by replacing $(X,B)$ with its dlt model $(Y, B_Y)$, we may assume that $(X,B)$ is dlt. 

We divide the proof into three cases. We note that if $K_X+B$ is pseudo-effective, then $K_X+B\equiv 0$ since $-(K_X+B)$ is $\Q$-effective.

\textbf{The case where $K_X+B$ is not pseudo-effective.}
For any birational projective morphism $\phi\colon X\to X'$ of normal varieties, the pair $(X',B'\coloneqq \phi_{*}B)$ is globally $F$-split by \cite[Lemma 2.3]{BBKW} and the desired vanishing can be reduced to $(X',B')$ by \cite[Lemma 4.1]{Kaw3}.

In particular, by running a $(K_X+B)$-MMP, we may assume that $(X,B)$ has a $(K_X+B)$-Mori fiber space structure $f\colon X\to Z$.
If $\dim\,Z=1$, then the assertion follows from Lemma \ref{lem:MFS}.
Suppose that $\dim\,Z=0$. In this case, the Picard number $\rho(X)=1$ and thus $D$ is ample. Here we use that $X$ is $\Q$-factorial since $(X,B)$ is dlt. Then the assertion follows from Proposition \ref{prop:ANV}.

\textbf{The case where $K_X+B\equiv 0$ and $B\neq 0$.}
In this case, $K_X$ is not pseudo-effective.
As in the previous case, by running a $K_X$-MMP, we may assume that $(X,B)$ has a $K_X$-Mori fiber space structure by \cite[Lemma 2.3]{BBKW} and \cite[Lemma 4.1]{Kaw3}.
Then an argument of the previous case works. That is, if the base of the Mori fiber space is a curve, then we apply Lemma \ref{lem:MFS}, and if the base is a point, we apply Proposition \ref{prop:ANV}.

\textbf{The case where  $K_X+B\equiv 0$ and $B=0$.}
In this case, $X$ is a klt surface with $K_X\equiv 0$. 
Then, considering the Zariski decomposition, we can reduce to the case where $D$ is nef and big in a way similar to \cite[Proof of Theorem 1.2]{Kaw3} as follows:

Let $D\equiv P+N$ be the Zariski decomposition. 
(see \cite[Theorem 3.1]{Eno} for the Zariski decomposition on normal surfaces).
We take a small rational number $0<\epsilon \ll 1$ such that $(X, \epsilon N)$ is klt.
Since $K_X$ is torsion by the abundance theorem (\cite[Theorem 1.2]{Tan12}) and $N$ is negative definite, it follows that $\kappa(K_X+\epsilon N)=\kappa(X, N)=0$.
We run a $(K_X+\epsilon N)$-MMP to obtain a birational contraction $\phi\colon X\to X'$ to a $(K_X+\epsilon N)$-minimal model $X'$.
Then $K_{X'}=\phi_{*}K_X\equiv0$, and in particular, $X'$ is klt Calabi-Yau.
Moreover, $\phi_{*}\epsilon N\equiv K_{X'}+\phi_{*}\epsilon N\equiv 0$, and hence $\phi_{*}D\equiv\phi_{*}P$ is nef and big. 
Finally, by \cite[Lemma 2.3]{BBKW} and \cite[Lemma 4.1]{Kaw3} again
we can replace $X$ with $X'$, and can assume that $D$ is nef and big.

Now, the assertion follows from Proposition \ref{prop:ANV}.
\end{proof}

\begin{rem}
    A partial result on Bogomolov-Sommese vanishing on globally $F$-split threefolds can be found in \cite{Kaw1}.
\end{rem}

\section{\texorpdfstring{$F$}--singularities and the extendability of differential forms for RDPs}
In this section, we summarize the $F$-purity, $F$-regularity, $F$-liftability, and the regular and logarithmic extendability of differential forms for RDPs. 
The $F$-purity and $F$-regularity of RDPs are well-known (see \cite[Proposition 2.1]{Fedder} and \cite[Theorem 1.2]{Hara(two-dim)} for example).
We have checked the $F$-liftability of RDPs in Section \ref{subsec:F-lift for RDPs}.
The regular extendability of differential forms for RDPs was completely investigated by Hirokado \cite[Theorem 1.1 (iii)]{Hirokado}, and we can also clarify the logarithmic extendability of differential forms by combining his results.

We remark that the logarithmic extendability of differential forms in this section is not enough to show Theorems \ref{Introthm:LET} and \ref{Introthm:BSV} as explained in Remark \ref{rem:intro}. 

\begin{conv}
Let $X$ be an affine surface that has an RDP
$P\in X$ and is smooth outside $P$.
We denote the minimal resolution of $X$ by $f\colon Y\to X$.
Then $\omega_X\cong \sO_X$ and $\omega_Y\cong \sO_Y$ hold, and
in particular, we obtain $T_X\cong \Omega_X^{[1]}$, $T_Y\cong \Omega^1_Y$, and $T_Y(-\log\,E)(E)\cong \Omega^1_Y(\log\,E)$.
\end{conv}

\begin{lem}[\textup{cf.~\cite[Theorem 1.1]{Hirokado} (iii)}]\label{lem:RET}
    We have an exact sequence
    \[
    0\to f_{*}\Omega_Y\to \Omega_X^{[1]} \to H^1_{E}(T_Y(-\log\,E))\to 0.
    \]
\end{lem}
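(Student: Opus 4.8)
The plan is to pull everything back to $Y$ and read off the sequence from the local cohomology of $T_Y(-\log E)$ along $E$. Since $X$ is affine with the single singular point $P$ and $f$ is an isomorphism over $X\setminus P$, I would identify $Y\setminus E\cong X\setminus P$ and use the convention's isomorphisms $T_Y\cong\Omega^1_Y$ and $\Omega^{[1]}_X\cong T_X$ to rewrite the asserted sequence in terms of $\mathcal F\coloneqq T_Y(-\log E)$. Concretely, I would run the local cohomology long exact sequence
\[
0\to H^0_E(\mathcal F)\to H^0(Y,\mathcal F)\to H^0(Y\setminus E,\mathcal F)\to H^1_E(\mathcal F)\to H^1(Y,\mathcal F)\to\cdots,
\]
where $H^0_E(\mathcal F)=0$ because $\mathcal F$ is locally free, hence torsion-free.

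Two identifications then do most of the work. First, away from $E$ one has $\mathcal F=T_Y\cong\Omega^1_Y$, so $H^0(Y\setminus E,\mathcal F)=H^0(X\setminus P,\Omega^1)=\Gamma(X,\Omega^{[1]}_X)$, the last equality because $\Omega^{[1]}_X=j_*\Omega^1_{X\setminus P}$ and $P$ has codimension two. Second, I claim $H^0(Y,\mathcal F)=H^0(Y,T_Y)=\Gamma(X,f_*\Omega^1_Y)$; the only content is $H^0(Y,T_Y(-\log E))=H^0(Y,T_Y)$, which I would extract from the log tangent sequence
\[
0\to T_Y(-\log E)\to T_Y\to\bigoplus_i\sO_{E_i}(E_i)\to 0,
\]
using that every exceptional component is a $(-2)$-curve, so $\sO_{E_i}(E_i)\cong\sO_{\PP^1}(-2)$ has no global sections and the map $H^0(Y,T_Y)\to\bigoplus_i H^0(E_i,\sO_{E_i}(E_i))$ vanishes. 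Geometrically this says every regular vector field is automatically tangent to the rigid exceptional configuration. Together these computations yield the left-exact part
\[
0\to f_*\Omega^1_Y\to\Omega^{[1]}_X\to H^1_E(T_Y(-\log E))
\]
of sheaves on the affine $X$, the middle map being $\delta$ after the above identification of $\Gamma(\Omega^{[1]}_X)$ with $H^0(Y\setminus E,\mathcal F)$.

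It remains to prove surjectivity onto $H^1_E(T_Y(-\log E))$, that is, that $\delta$ is onto, equivalently that the next map $H^1_E(\mathcal F)\to H^1(Y,\mathcal F)$ is zero. I would deduce this from $H^1(Y,T_Y(-\log E))=0$, which by Leray (recall $X$ is affine) is the same as $R^1f_*T_Y(-\log E)=0$. This vanishing is the main obstacle: it is precisely the rigidity of the pair $(Y,E)$ under equisingular deformations. Pushing the log tangent sequence forward gives $f_*\bigl(\bigoplus_i\sO_{E_i}(E_i)\bigr)=0$, hence an injection $R^1f_*T_Y(-\log E)\hookrightarrow R^1f_*T_Y$ identified with $\ker\bigl(R^1f_*T_Y\to R^1f_*\bigoplus_i\sO_{E_i}(E_i)\bigr)$; since $R^1f_*T_Y$ is itself nonzero in general, one genuinely needs the exceptional configuration to control this kernel, rather than a naive vanishing. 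I expect to obtain exactly this input from Hirokado's analysis of the $ADE$ configurations \cite[Theorem 1.1 (iii)]{Hirokado} rather than reproving it. Granting the vanishing, $\delta$ is surjective and the three-term sequence is exact, completing the proof.
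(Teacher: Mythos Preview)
Your left-exact portion is fine, and the identification $H^0(Y,T_Y(-\log E))=H^0(Y,T_Y)$ via the log tangent sequence and $H^0(E_i,\sO_{E_i}(E_i))=0$ is a detail the paper does not spell out. The gap is in the surjectivity step: the vanishing $R^1f_*T_Y(-\log E)=0$ that you invoke is \emph{false} for many RDPs. By formal duality and $\omega_Y\cong\sO_Y$, the space $R^1f_*T_Y(-\log E)$ is dual to $H^1_E(\Omega^1_Y(\log E))$, which is exactly the obstruction to the logarithmic extension theorem (Lemma~\ref{lem:LET for RDPs}); Table~\ref{table:RDPs} shows this is nonzero for $D_{2n}^r$ with $r<n-1$, $E_6^0$, $E_7^{0,1,2}$, $E_8^{0,1,2,3}$ in $p=2$, and similarly in $p=3,5$. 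The equisingular rigidity you appeal to simply does not hold in these characteristic-$p$ cases---indeed, its failure is precisely why Artin's list has multiple types $D_n^r$, $E_n^r$ for each Dynkin diagram. Since $Y\setminus E$ is affine, the map $H^1_E(\mathcal F)\to H^1(Y,\mathcal F)$ is surjective, so its vanishing is equivalent to $H^1(Y,\mathcal F)=0$ and there is no weaker substitute available from the local cohomology sequence alone.

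The paper sidesteps this by a dimension count rather than a vanishing. After the injection $\Omega^{[1]}_X/f_*\Omega^1_Y\hookrightarrow H^1_E(T_Y(-\log E))$, it uses formal duality to see that the target is finite-dimensional (isomorphic to $R^1f_*\Omega^1_Y(\log E)$), reduces by flat base change to $X'=\Spec\sO_{X,P}$, and then invokes \cite[Theorem 1.1 (iii)]{Hirokado}, which gives the exact sequence---equivalently, the equality of dimensions---directly in that local setting. An injection of finite-dimensional $k$-vector spaces of equal dimension is an isomorphism, so surjectivity follows without any cohomology vanishing on $Y$.
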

\begin{proof}
   We note that 
    \begin{align*}
    &f_{*}\Omega^1_Y=H^0(Y, \Omega^1_Y)\\
    &\Omega_X^{[1]}=H^0(Y\setminus E, \Omega^1_Y)
    \end{align*}
    since $X$ is affine.
   By the local cohomology exact sequence, we have an injective map
   \[
   \Omega_X^{[1]}/f_{*}\Omega_Y \hookrightarrow H^1_{E}(T_Y(-\log\,E)).
   \]
     By the formal duality (\cite[Chapter 3, Theorem 3.3]{Hartshorne70}), we have $H^1_{E}(T_Y(-\log\,E))\cong R^1f_{*}\Omega^1_Y(\log\,E)$ since $\omega_Y\cong \sO_Y$.
     In particular, both $\Omega_X^{[1]}/f_{*}\Omega^1_Y$ and $H^1_{E}(T_Y(-\log\,E))$ are supported on $P$ and finite dimensional over $k$.
     Thus, it suffices to show that $\dim_{k} \Omega_X^{[1]}/f_{*}\Omega_Y=\dim_{k} H^1_{E}(T_Y(-\log\,E))$.
     Let $\tau\colon X'\coloneqq \Spec\,\sO_{X,P}\to X$ the natural map.
     We denote by $Y',E',f'$ the base change of $Y,E,f$ by $\tau$, respectively.
     Since $\tau$ is flat, we have 
     \[
     \tau^{*}(\Omega_X^{[1]}/f_{*}\Omega^1_Y)=\Omega_{X'}^{[1]}/(f')_{*}\Omega^1_{Y'}.
     \]
     We also have
     \begin{align*}
     \tau^{*}H^1_{E}(T_{Y}(-\log\,E)) 
     &= \tau^{*}\mathcal{H}^1R\Gamma_{P}(Rf_{*} (T_{Y} (-\log\,E)))  \\
     &= \mathcal{H}^1R\Gamma_{P}(\tau^{*}Rf_{*} (T_{Y} (-\log\,E))) \\
     &= \mathcal{H}^1R\Gamma_{P} (R(f')_{*} (T_{Y'}(-\log\,E'))) \\
     &= H^{1}_{E'} (T_{Y'} (-\log\,E'))
     \end{align*}
     Therefore, we can replace $X$ with $X'$, and we obtain the assertion by \cite[Theorem 1.1 (iii)]{Hirokado}.
\end{proof}

\begin{lem}\label{lem:LET for RDPs}
    We have the exact sequence
    \[
    0\to f_{*}\Omega^1_Y(\log\,E)\to \Omega_X^{[1]} \to H^1_{E}(\Omega^1_Y(\log\,E))\to 0.
    \]
\end{lem}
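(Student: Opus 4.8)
The plan is to mirror the structure of the proof of Lemma \ref{lem:RET}, replacing the tangent-sheaf formulation there with the logarithmic one, and to exploit the fact that the two lemmas differ only by the presence of the log pole along $E$. Since $X$ is affine we again have the identifications
\[
f_{*}\Omega^1_Y(\log\,E)=H^0(Y,\Omega^1_Y(\log\,E))\quad\text{and}\quad \Omega_X^{[1]}=H^0(Y\setminus E,\Omega^1_Y(\log\,E)),
\]
the latter because $E$ is $f$-exceptional, so away from $E$ the sheaves $\Omega^1_Y$ and $\Omega^1_Y(\log\,E)$ agree and both restrict to $\Omega_X^{[1]}$ on the smooth locus of $X$. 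The local cohomology sequence for the closed set $E$ then yields the left-exact sequence
\[
0\to f_{*}\Omega^1_Y(\log\,E)\to \Omega_X^{[1]}\to H^1_E(\Omega^1_Y(\log\,E)),
\]
so the only real content is surjectivity of the last arrow, equivalently the vanishing of $H^2_E(\Omega^1_Y(\log\,E))$ (or of the relevant map into it).

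First I would reduce to the complete-local situation exactly as in Lemma \ref{lem:RET}: letting $\tau\colon X'=\Spec\,\sO_{X,P}\to X$ be the natural flat map and base-changing $Y,E,f$ along $\tau$, flatness of $\tau$ commutes with $f_*$, with taking the quotient $\Omega_X^{[1]}/f_*\Omega^1_Y(\log\,E)$, and with the formation of $H^1_E$ via $\tau^{*}\mathcal H^1 R\Gamma_P(Rf_*(-))=\mathcal H^1R\Gamma_P(R(f')_*(-))$. This lets me assume $X$ is local with closed point $P$ and reduces the claim to a statement about the fibre over $P$. The key comparison is then dimensional, as in Lemma \ref{lem:RET}: I would show that the cokernel $\Omega_X^{[1]}/f_*\Omega^1_Y(\log\,E)$ and $H^1_E(\Omega^1_Y(\log\,E))$ are both supported at $P$ and finite-dimensional over $k$, with equal dimension, so that the injection forced by left-exactness is automatically an isomorphism.

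For the finiteness and the dimension count I would invoke formal duality (\cite[Chapter 3, Theorem 3.3]{Hartshorne70}) in the same way as in Lemma \ref{lem:RET}: using $\omega_Y\cong\sO_Y$ one identifies
\[
H^1_E(\Omega^1_Y(\log\,E))\cong R^1f_*\bigl(\sHom_{\sO_Y}(\Omega^1_Y(\log\,E),\omega_Y)\bigr)=R^1f_*\,T_Y(-\log\,E),
\]
and similarly the pushforward $R^1f_*\Omega^1_Y(\log\,E)$ dualizes $H^1_E(T_Y(-\log\,E))$. Thus Lemma \ref{lem:RET} already computes the relevant dimension for the sheaf $T_Y(-\log\,E)$, and the present lemma is its Serre/formal dual; so the dimension of the log cokernel matches $\dim_k H^1_E(\Omega^1_Y(\log\,E))$ by the same duality bookkeeping that produced Lemma \ref{lem:RET}. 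Concretely, I expect both quantities to be identified with $\dim_k R^1f_*\Omega^1_Y(\log\,E)$, after which the inclusion map is an isomorphism for dimension reasons.

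The main obstacle will be the surjectivity step, i.e.\ controlling the higher local-cohomology term and making the dimension equality precise rather than merely an inequality. In Lemma \ref{lem:RET} this was delegated to \cite[Theorem 1.1 (iii)]{Hirokado}, whose input is the non-logarithmic sheaf $T_Y(-\log\,E)$; here I must either quote the corresponding logarithmic statement from Hirokado or deduce it from the log-to-non-log comparison. The delicate point is that passing between $\Omega^1_Y$ and $\Omega^1_Y(\log\,E)$ changes both the global sections on $Y\setminus E$ (they do not change, since $E$ is removed) and the local cohomology along $E$ (which does change), so I must check carefully that the residue exact sequence relating $\Omega^1_Y$, $\Omega^1_Y(\log\,E)$, and the restriction to $E$ is compatible with $H^*_E$ and that the extra contributions from $E$ cancel in the dimension count. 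I expect this compatibility to follow from the snc structure of $E$ on the minimal resolution of an RDP together with the explicit duality above, but it is the step that requires genuine verification rather than formal transport.
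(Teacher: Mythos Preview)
Your setup is correct: the local cohomology sequence gives left-exactness, and the problem reduces to the dimension equality
\[
\dim_k\bigl(\Omega_X^{[1]}/f_*\Omega^1_Y(\log\,E)\bigr)=\dim_k H^1_E(\Omega^1_Y(\log\,E)).
\]
But your plan for this equality has a genuine gap, and in one place a concrete error. You write that formal duality should identify both sides with $\dim_k R^1f_*\Omega^1_Y(\log\,E)$. That is not what duality gives: since $\omega_Y\cong\sO_Y$, formal duality pairs $H^1_E(\Omega^1_Y(\log\,E))$ with $R^1f_*T_Y(-\log\,E)$, while $R^1f_*\Omega^1_Y(\log\,E)$ is dual to $H^1_E(T_Y(-\log\,E))$ --- the object from Lemma~\ref{lem:RET}, not the present one. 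These two numbers are different in general (look at the RDP table: types such as $D_{2n}^{n-1}$ in $p=2$ have $H^1_E(\Omega^1_Y(\log\,E))=0$ while $H^1_E(T_Y(-\log\,E))\neq 0$). So ``duality bookkeeping'' alone cannot close the argument, and your expectation that the present lemma is simply the Serre dual of Lemma~\ref{lem:RET} is false at the level of the dimension count.

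The paper does use the residue sequence you allude to, but the missing input you did not identify is \cite[Theorem~2.4]{Hirokado}, which for an RDP resolution gives
\[
\dim_k H^1(Y,\Omega^1_Y)=\dim_k H^1(Y,T_Y)=d+\dim_k H^1(Y,T_Y(-\log\,E)),
\]
where $d$ is the number of exceptional components. The argument then runs as follows: write
\[
\dim_k\bigl(\Omega_X^{[1]}/f_*\Omega^1_Y(\log\,E)\bigr)
=\dim_k\bigl(\Omega_X^{[1]}/f_*\Omega^1_Y\bigr)-\dim_k\bigl(f_*\Omega^1_Y(\log\,E)/f_*\Omega^1_Y\bigr).
\]
The first term is $\dim_k H^1(Y,\Omega^1_Y(\log\,E))$ by Lemma~\ref{lem:RET} plus formal duality. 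The second is computed from the residue sequence $0\to\Omega^1_Y\to\Omega^1_Y(\log\,E)\to\bigoplus_i\sO_{E_i}\to 0$ as $d-\dim_k H^1(Y,\Omega^1_Y)+\dim_k H^1(Y,\Omega^1_Y(\log\,E))$. Subtracting and inserting Hirokado's relation collapses everything to $\dim_k H^1(Y,T_Y(-\log\,E))$, which by formal duality is exactly $\dim_k H^1_E(\Omega^1_Y(\log\,E))$. Without \cite[Theorem~2.4]{Hirokado} (or an equivalent statement relating $H^1(T_Y)$ and $H^1(T_Y(-\log\,E))$), the cancellation you hope for does not occur, and your sketch stalls at the point you yourself flagged as ``the step that requires genuine verification''.
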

\begin{proof}
    As in Lemma \ref{lem:RET}, it suffices to show that 
    \[
    \dim_{k} H^0(Y\setminus E, \Omega^1_Y(\log\,E))/H^0(Y, \Omega^1_Y(\log\,E))=\dim_{k} H^1_{E}(\Omega^1_Y(\log\,E)).
    \]
    By Lemma \ref{lem:RET},
    we have 
    \begin{align*}
    \dim_{k} H^0(Y\setminus E, \Omega^1_Y(\log\,E))/H^0(Y, \Omega^1_Y)=&\dim_{k} H^0(Y\setminus E, \Omega^1_Y)/H^0(Y, \Omega^1_Y)\\
    =&\dim_{k} H^1_{E}(T_Y(-\log\,E))\\
    =&\dim_{k}\,H^1(Y, \Omega^1_Y(\log\,E)),
    \end{align*}
    where the last equality is the formal duality. 
   Then we have
    \begin{align*}
    &\dim_{k} H^0(Y\setminus E, \Omega^1_Y(\log\,E))/H^0(Y, \Omega^1_Y(\log\,E))\\
    =&\dim_{k} H^0(Y\setminus E, \Omega^1_Y(\log\,E))/H^0(Y, \Omega^1_Y)-\dim_{k} H^0(Y, \Omega^1_Y(\log\,E))/H^0(Y, \Omega^1_Y)\\
    =&\dim_{k}\,H^1(Y, \Omega^1_Y(\log\,E))-\dim_{k} H^0(Y, \Omega^1_Y(\log\,E))/H^0(Y, \Omega^1_Y).
    \end{align*}
    By the exact sequence
    \[
    0\to \Omega^1_Y \to \Omega^1_Y(\log\,E)\to \bigoplus_{i=1}^{d}\sO_{E_i}\to 0,
    \]
    we obtain
    \[
    \dim_{k} H^0(Y, \Omega^1_Y(\log\,E))/H^0(Y, \Omega^1_Y)=d-\dim_k H^1(Y, \Omega^1_Y)+\dim\,H^1(Y, \Omega^1_Y(\log\,E)).
    \]
    By \cite[Theorem 2.4]{Hirokado} and $\Omega^1_Y\cong T_Y$, we have
    \[
    \dim_k H^1(Y, \Omega^1_Y)=d+\dim_{k} H^1(T_Y(-\log\,E))=d+\dim_{k} H^1_{E}(Y, \Omega^1_Y(\log\,E)).
    \]
    Therefore, we obtain
    \begin{align*}
    &\dim_{k} H^0(Y\setminus E, \Omega^1_Y(\log\,E))/H^0(Y, \Omega^1_Y(\log\,E))\\
    =&\dim_{k} H^1(Y, \Omega^1_Y(\log\,E))-(d-\dim_k H^1(Y, \Omega^1_Y)+\dim\,H^1(Y, \Omega^1_Y(\log\,E)))\\
    =&\dim_{k}\,H^1(Y, \Omega^1_Y(\log\,E))-(d-(d+\dim_{k} H^1_{E}(Y, \Omega^1_Y(\log\,E)))+\dim\,H^1(Y, \Omega^1_Y(\log\,E)))\\
    =&\dim_{k} H^1_{E}(Y, \Omega^1_Y(\log\,E)),
    \end{align*}
    as desired.
\end{proof}

We recall the regular and logarithmic extendability of differential forms:

\begin{defn}[Regular and logarithmic extension theorem]\label{def:log ext thm}
Let $X$ be a normal variety and $B$ a reduced divisor on $X$. 
\begin{enumerate}
\item[\textup{(1)}] We say that $X$ satisfies the \textit{regular extension theorem} if, for any proper birational morphism $f\colon Y\to X$ from a normal variety $Y$, the restriction map
\[
f_{*}\Omega^{[i]}_Y\hookrightarrow \Omega^{[i]}_X
\]
is an isomorphism for all $i\geq 0$.
\item[\textup{(2)}] We say that $(X,B)$ satisfies the \textit{logarithmic extension theorem} if, for any proper birational morphism $f\colon Y\to X$ from a normal variety $Y$, the restriction map
\[
f_{*}\Omega^{[i]}_Y(\log f^{-1}_{*}B+E)\hookrightarrow \Omega^{[i]}_X(\log B)
\]
is an isomorphism for all $i\geq 0$, where $E$ is the reduced $f$-exceptional divisor.
\end{enumerate}
If a resolution (resp.~log resolution) exists, then one can verify the regular (resp.~logarithmic) extendability for one resolution (resp.~log resolution) (see \cite[Lemma 2.13]{GKK}).
\end{defn}

\begin{rem}
    Let $(P\in X)$ be a singularity of a pair of a normal variety $X$.
    We say $(P\in X)$ satisfies the regular extension theorem if 
    \[
    f_{*}\Omega^{[i]}_Y\hookrightarrow \Omega^{[i]}_X
    \]
    is surjective at $P$ for some representative $(P\in X)$ and some resolution $f\colon Y\to X$. 
    We define the logarithmic extendability for a singularity $(P\in X,B)$ similarly.
\end{rem}

\begin{thm}\label{thm:RDPs}
Let $(P\in X)$ be an RDP.
For each type of $P$, the $F$-purity, $F$-regularity, $F$-liftability, the regular and logarithmic extendability are shown as in Table \ref{table:RDPs},
where RET and LET mean the regular and logarithmic extension theorem, respectively. 
 \begin{tiny}
\begin{table}[ht]
\caption{$F$-purity, $F$-regularity, $F$-liftability, and the extendability for RDPs}\label{table:RDPs}
\centering
\begin{tabular}{|c|c|l|c|c|c|c|c|}
\hline
$p$ & type & equation & $F$-pure? & $F$-regular? & $F$-liftable? & RET? & LET? \\
\hline
\hline
$>0$ & $A_{n}$ $(p\nmid(n+1))$ &$z^{n+1} +xy$ & Y& Y & Y & Y & Y\\
 \hline
$>0$ & $A_{n}$ $(p|(n+1))$  &$z^{n+1} +xy $ & Y &Y & Y & N & Y\\
 \hline
$>2$ & $D_{n}$ & $z^2 +x^{2}y+y^{n+1}$ & Y &Y& Y & Y & Y\\
\hline
$>3$ & $E_{6}$ & $z^2+x^3+y^4$ & Y &Y & Y & Y & Y\\
\hline
$>3$ & $E_{7}$ & $z^2+x^3+xy^3$ & Y &Y & Y & Y & Y\\
\hline
$>5$ & $E_{8}$ & $z^2+x^3+y^5$  & Y &Y & Y & Y & Y\\
 \hline
$2$ & $D_{2n}^{0}(n\geq 2)$ &$z^2 +x^{2}y+xy^n$ & N &N & N & N & N\\
 \hline
$2$ & $D_{2n}^{r}$ $(r=1, \ldots, n-2)$ &$z^2 +x^{2}y+xy^n+xy^{n-r}z$ & N &N& N & N & N \\
\hline
$2$ & $D_{2n}^{n-1}(n\geq 2)$ &$z^2 +x^{2}y+xy^n+xyz$ & Y&N & Y & N & Y\\
\hline
$2$ & $D_{2n+1}^{0}(n\geq 2)$ &$z^2+x^2y+y^nz$ & N&N &  N & N & N\\
\hline
$2$ & $D_{2n+1}^{r}$ $(r=1,\ldots, n-2)$ &$z^2+x^2y+y^nz+xy^{n-r}z$ & N&N  &  N & N & N\\
\hline
$2$ & $D_{2n+1}^{n-1}(n\geq 2)$ &$z^2+x^2y+y^nz+xy^{n-r}z$ & Y&N & Y & N & Y \\
\hline
$2$ & $E_{6}^{0}$ &$z^2+x^3+y^2z $& N&N & N & N & N\\
\hline
$2$ & $E_{6}^{1}$ &$z^2+x^3+y^2z+xyz$ & Y&N & Y & Y & Y\\
\hline
$2$ & $E_{7}^{0}$ &$z^2+x^3+xy^3$ & N&N & N & N & N\\
\hline
$2$ & $E_{7}^{1}$ &$z^2+x^3+xy^3+x^2yz$& N&N & N & N & N\\
\hline
$2$ & $E_{7}^{2}$ &$z^2+x^3+xy^3+y^3z$ & N&N & N & N & N\\
\hline
$2$ & $E_{7}^{3}$ &$z^2+x^3+xy^3+xyz$& Y&N & Y & Y & Y\\
\hline
$2$ & $E_{8}^{0}$ &$z^2+x^3+y^5$ & N&N & N & N & N\\
\hline
$2$ & $E_{8}^{1}$ &$z^2+x^3+y^5+xy^3z$ & N&N & N & N & N\\
\hline
$2$ & $E_{8}^{2}$ &$z^2+x^3+y^5+xy^2z$ & N&N & N & N & N\\
\hline
$2$ & $E_{8}^{3}$ &$z^2+x^3+y^5+y^3z$ & N&N & N & N & N\\
\hline
$2$ & $E_{8}^{4}$ &$z^2+x^3+y^5+xyz$& Y&N & Y & Y& Y\\
\hline
$3$ & $E_{6}^{0}$ & $z^2+x^3+y^4$ & N&N & N & N & N\\ 
\hline
$3$ & $E_{6}^{1}$ &$z^2+x^3+y^4+x^2y^2$ & Y&N & Y & N & Y\\ 
\hline
$3$ & $E_{7}^{0}$ & $z^2+x^3+xy^3$ & N&N & N & N & N\\ 
\hline
$3$ & $E_{7}^{1}$ &$z^2+x^3+xy^3+x^2y^2$ & Y&N & Y & Y & Y\\ 
\hline
$3$ & $E_{8}^{0}$ & $z^2+x^3+y^5$ & N&N & N & N& N \\ 
\hline
$3$ & $E_{8}^{1}$ & $z^2+x^3+y^5+x^2y^3$ &N&N  & N & N & N\\ 
\hline
$3$ & $E_{8}^{2}$ & $z^2+x^3+y^5+x^2y^2$ &Y&N & Y & Y & Y\\ 
\hline
$5$ & $E_{8}^{0}$ &$z^2+x^3+y^5$  & N&N & N & N & N\\ 
\hline
$5$ & $E_{8}^{1}$ &$z^2+x^3+y^5+xy^4$ & Y&N & N & Y & Y\\ 
\hline
\end{tabular}
\end{table}
\end{tiny}
\end{thm}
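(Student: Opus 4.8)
The plan is to verify Table~\ref{table:RDPs} one column at a time, reducing every column except the last to results already in hand and treating the logarithmic-extension (LET) column as the new content. The $F$-purity and $F$-regularity columns are classical: I would read them off from Fedder's criterion \cite[Proposition 2.1]{Fedder} and Hara's classification \cite[Theorem 1.2]{Hara(two-dim)}, applied to each defining equation. For the $F$-liftability column I would combine Corollary~\ref{cor:F-lift to F-pure}, by which every non-$F$-pure row is automatically non-$F$-liftable, with Proposition~\ref{prop:canonical}, which shows that an $F$-pure RDP is $F$-liftable precisely unless $p=5$ and it is of type $E_8^1$; this settles the column completely, the infinite $D$-families included, since they were handled uniformly in Proposition~\ref{prop:canonical} (the $F$-pure members $D_{2n}^{n-1}$, $D_{2n+1}^{n-1}$ via Lemma~\ref{lem:F-lift criterion}, the rest via Corollary~\ref{cor:F-lift to F-pure}). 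Finally, the RET column is Hirokado's computation \cite[Theorem 1.1 (iii)]{Hirokado}, recorded here as Lemma~\ref{lem:RET}: regular extendability holds exactly when the cokernel $H^1_E(T_Y(-\log E))$ vanishes.

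For the LET column I would first dispose of every $F$-pure row by two soft implications that avoid any cohomology computation. First, $F$-liftability implies LET: Theorem~\ref{thm:F-lift} with $B=0$ and $D=0$ yields the isomorphism $f_{*}\Omega^{[i]}_Y(\log E)\xrightarrow{\sim}\Omega^{[i]}_X$. Second, RET implies LET: the inclusion $\Omega^1_Y\hookrightarrow\Omega^1_Y(\log E)$ gives $f_{*}\Omega^1_Y\subseteq f_{*}\Omega^1_Y(\log E)\subseteq\Omega^{[1]}_X$, so if the outer terms coincide --- which is exactly RET by Lemma~\ref{lem:RET} --- then the middle inclusion is an equality, which is LET by Lemma~\ref{lem:LET for RDPs}. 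Now every $F$-pure RDP is either $F$-liftable, hence LET by the first implication, or is the single type $E_8^1$ in $p=5$, which has $\mathrm{RET}=\mathrm{Y}$ by Hirokado and hence LET by the second. In particular the infinite $F$-pure families $D_{2n}^{n-1}$ and $D_{2n+1}^{n-1}$ are covered uniformly through their $F$-liftability, with no case-by-case cohomology.

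It then remains to show that each non-$F$-pure RDP fails LET, and this is where the real work lies. Here I would invoke Lemma~\ref{lem:LET for RDPs}, which identifies the LET obstruction with $H^1_E(\Omega^1_Y(\log E))$; the dimension chain in its proof --- the residue sequence combined with \cite[Theorem 2.4]{Hirokado} --- gives $\dim_k H^1_E(\Omega^1_Y(\log E))=h^1(Y,\Omega^1_Y)-d$, where $d$ is the number of exceptional $(-2)$-curves. Thus LET holds if and only if $h^1(Y,\Omega^1_Y)=d$, and I would extract $h^1(Y,\Omega^1_Y)$ from Hirokado's Hodge-number computations to verify the strict inequality $h^1(Y,\Omega^1_Y)>d$ for every non-$F$-pure type. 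The main obstacle, exactly as in the proof of Proposition~\ref{prop:canonical}, is to carry out this comparison simultaneously over the infinite families $D_{2n}^{r}$ and $D_{2n+1}^{r}$ (including $r=0$ and $1\le r\le n-2$) rather than for one $n$ at a time; pinning down the relevant cohomological dimension uniformly in $n$ is the crux. The upshot is that the LET column coincides with the $F$-purity column.
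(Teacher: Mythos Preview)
Your plan is correct, but it takes a longer road than the paper does for the LET column, and the obstacle you flag at the end is not actually there.

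The paper treats both extendability columns in one stroke: by Lemmas~\ref{lem:RET} and~\ref{lem:LET for RDPs} the RET and LET obstructions are $H^1_E(T_Y(-\log E))$ and $H^1_E(\Omega^1_Y(\log E))$ respectively, and \cite[Theorem~1.1 (ii), (iii)]{Hirokado} computes the dimensions of \emph{both} of these local cohomology groups for every RDP, the infinite $D$-families included (note $S_Y$ in Hirokado's notation is $T_Y(-\log E)$, and $\Omega^1_Y(\log E)\cong T_Y(-\log E)(E)$ since $\omega_Y\cong\sO_Y$). So the paper simply reads off all LET entries from Hirokado's tables; there is no separate argument for the $F$-pure rows and no uniform-in-$n$ difficulty to overcome for the non-$F$-pure $D$-types.

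Your two ``soft'' implications --- $F$-liftability $\Rightarrow$ LET via Theorem~\ref{thm:F-lift}, and RET $\Rightarrow$ LET via the sandwich $f_*\Omega^1_Y\subseteq f_*\Omega^1_Y(\log E)\subseteq\Omega^{[1]}_X$ --- are valid and give a pleasant conceptual shortcut for the $F$-pure rows that does not require opening Hirokado's (ii) at all. For the non-$F$-pure rows, your identity $\dim_k H^1_E(\Omega^1_Y(\log E))=h^1(Y,\Omega^1_Y)-d$ is correct (it is exactly the line $\dim_k H^1(Y,\Omega^1_Y)=d+\dim_k H^1_E(\Omega^1_Y(\log E))$ in the proof of Lemma~\ref{lem:LET for RDPs}), but since Hirokado already tabulates $\dim_k H^1_E(\Omega^1_Y(\log E))$ itself, passing through $h^1(Y,\Omega^1_Y)$ is an unnecessary detour. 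Either way the ``main obstacle'' you anticipate --- handling the $D_{2n}^r$ and $D_{2n+1}^r$ families uniformly in $n$ --- is not your burden: it is already discharged in \cite{Hirokado}.
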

\begin{proof}
    The $F$-purity of each RDP can be checked by Fedder's critierion \cite[Propostion 2.1]{Fedder}.
    For the $F$-regularity, we refer to \cite[Theorem 1.1]{Hara(two-dim)}. 
    We can determine $F$-liftability of RDPs by Theorem \ref{Introthm:F-purity and F-liftability}.

    By Lemmas \ref{lem:RET} and \ref{lem:LET for RDPs}, it suffices to see the dimension of $H^1_{E}(T_Y(-\log\,E))$ and $H^1_{E}(\Omega^1_Y(\log\,E))$ for the regular and logarithmic extendability, respectively.
    As in the proof of Lemma \ref{lem:RET}, we can replace $X$ with $\Spec\,\sO_{X,P}$.  
    Then we can use \cite[Theorem 1.1 (ii), (iii)]{Hirokado}, which determined the dimension of $H^1_{E}(T_Y(-\log\,E))$ and $H^1_{E}(\Omega^1_Y(\log\,E))$ for all the RDPs. Note that $S_Y$ in \cite{Hirokado} is equal to $T_Y(-\log\,E)$ in our notation and $T_Y(\log\,E)(E)\cong\Omega^1_Y(\log\,E)$ since $\omega_Y=\sO_Y$.
    
    Summarizing, we obtain Table \ref{table:RDPs}.
\end{proof}

\begin{rem}
    The failure of the regular extension theorem is also proved in \cite[Proposition 1.5]{Langer19} and \cite[Example 10.2]{Gra} in different ways.
    On the other hand, it holds for tame quotients \cite[Theorem C]{Kaw4}.
\end{rem}

\section*{Acknowledgements}
The author wishes to express his gratitude to Adrian Langer and Shou Yoshikawa for valuable discussion.
The first author was supported by JSPS KAKENHI Grant number JP22J00272.
The second author was supported by JSPS KAKENHI Grant number JP22J00962.


\begin{thebibliography}{BBKW22}

\bibitem[Art77]{Artin(RDP)}
M.~Artin.
\newblock Coverings of the rational double points in characteristic {$p$}.
\newblock In {\em Complex analysis and algebraic geometry}, pages 11--22. 1977.

\bibitem[AWZ21]{AWZ}
Piotr Achinger, Jakub Witaszek, and Maciej Zdanowicz.
\newblock Global {F}robenius liftability {I}.
\newblock {\em J. Eur. Math. Soc. (JEMS)}, 23(8):2601--2648, 2021.

\bibitem[AWZ23]{AWZ2}
Piotr Achinger, Jakub Witaszek, and Maciej Zdanowicz.
\newblock Global {F}robenius liftability {II}: surfaces and {F}ano threefolds.
\newblock {\em Ann. Sc. Norm. Super. Pisa Cl. Sci. (5)}, 24(1):329--366, 2023.

\bibitem[AZ21]{AZ21}
Piotr Achinger and Maciej Zdanowicz.
\newblock Serre-{T}ate theory for {C}alabi-{Y}au varieties.
\newblock {\em J. Reine Angew. Math.}, 780:139--196, 2021.

\bibitem[BBKW22]{BBKW}
Fabio Bernasconi, Iacopo Brivio, Tatsuro Kawakami, and Jakub Witaszek.
\newblock Lifting globally {$F$}-split surfaces to characteristic zero.
\newblock {\em arXiv preprint arXiv:2205.01779}, 2022.

\bibitem[BK05]{fbook}
Michel Brion and Shrawan Kumar.
\newblock {\em Frobenius splitting methods in geometry and representation theory}, volume 231 of {\em Progress in Mathematics}.
\newblock Birkh\"{a}user Boston, Inc., Boston, MA, 2005.

\bibitem[BTLM97]{BTLM}
Anders Buch, Jesper~F. Thomsen, Niels Lauritzen, and Vikram Mehta.
\newblock The {F}robenius morphism on a toric variety.
\newblock {\em Tohoku Math. J. (2)}, 49(3):355--366, 1997.

\bibitem[Eji19]{Eji19}
Sho Ejiri.
\newblock When is the {A}lbanese morphism an algebraic fiber space in positive characteristic?
\newblock {\em Manuscripta Math.}, 160(1-2):239--264, 2019.

\bibitem[Eno20]{Eno}
Makoto Enokizono.
\newblock An integral version of {Z}ariski decompositions on normal surfaces.
\newblock {\em arXiv preprint arXiv:2007.06519}, 2020.

\bibitem[Fed83]{Fedder}
Richard Fedder.
\newblock {$F$}-purity and rational singularity.
\newblock {\em Trans. Amer. Math. Soc.}, 278(2):461--480, 1983.

\bibitem[Fli92]{Flips-abundance}
{\em Flips and abundance for algebraic threefolds}.
\newblock Soci\'{e}t\'{e} Math\'{e}matique de France, Paris, 1992.
\newblock Papers from the Second Summer Seminar on Algebraic Geometry held at the University of Utah, Salt Lake City, Utah, August 1991, Ast\'{e}risque No. 211 (1992).

\bibitem[GKK10]{GKK}
Daniel Greb, Stefan Kebekus, and S\'{a}ndor~J. Kov\'{a}cs.
\newblock Extension theorems for differential forms and {B}ogomolov-{S}ommese vanishing on log canonical varieties.
\newblock {\em Compos. Math.}, 146(1):193--219, 2010.

\bibitem[GKKP11]{GKKP}
Daniel Greb, Stefan Kebekus, S\'{a}ndor~J. Kov\'{a}cs, and Thomas Peternell.
\newblock Differential forms on log canonical spaces.
\newblock {\em Publ. Math. Inst. Hautes \'{E}tudes Sci.}, (114):87--169, 2011.

\bibitem[Gra21]{Gra}
Patrick Graf.
\newblock Differential forms on log canonical spaces in positive characteristic.
\newblock {\em J. Lond. Math. Soc. (2)}, 104(5):2208--2239, 2021.

\bibitem[GT16]{GT16}
Yoshinori Gongyo and Shunsuke Takagi.
\newblock Surfaces of globally {$F$}-regular and {$F$}-split type.
\newblock {\em Math. Ann.}, 364(3-4):841--855, 2016.

\bibitem[Har70]{Hartshorne70}
Robin Hartshorne.
\newblock {\em Ample subvarieties of algebraic varieties}.
\newblock Lecture Notes in Mathematics, Vol. 156. Springer-Verlag, Berlin-New York, 1970.
\newblock Notes written in collaboration with C. Musili.

\bibitem[Har98]{Hara(two-dim)}
Nobuo Hara.
\newblock Classification of two-dimensional {$F$}-regular and {$F$}-pure singularities.
\newblock {\em Adv. Math.}, 133(1):33--53, 1998.

\bibitem[Hir04]{Hirokado(deformation)}
Masayuki Hirokado.
\newblock Deformations of rational double points and simple elliptic singularities in characteristic {$p$}.
\newblock {\em Osaka J. Math.}, 41(3):605--616, 2004.

\bibitem[Hir19]{Hirokado}
Masayuki Hirokado.
\newblock Further evaluation of {W}ahl vanishing theorems for surface singularities in characteristic {$p$}.
\newblock {\em Michigan Math. J.}, 68(3):621--636, 2019.

\bibitem[HW02]{Hara-Watanabe}
Nobuo Hara and Kei-Ichi Watanabe.
\newblock F-regular and {F}-pure rings vs. log terminal and log canonical singularities.
\newblock {\em J. Algebraic Geom.}, 11(2):363--392, 2002.

\bibitem[Kar77]{Kar}
Ulrich Karras.
\newblock Deformations of cusp singularities.
\newblock In {\em Several complex variables ({P}roc. {S}ympos. {P}ure {M}ath., {V}ol. {XXX}, {P}art 1, {W}illiams {C}oll., {W}illiamstown, {M}ass., 1975)}, volume Vol. XXX, Part 1 of {\em Proc. Sympos. Pure Math.}, pages 37--44. Amer. Math. Soc., Providence, RI, 1977.

\bibitem[Kat70]{Kat70}
Nicholas~M. Katz.
\newblock Nilpotent connections and the monodromy theorem: {A}pplications of a result of {T}urrittin.
\newblock {\em Inst. Hautes \'{E}tudes Sci. Publ. Math.}, (39):175--232, 1970.

\bibitem[Kaw21]{Kaw1}
Tatsuro Kawakami.
\newblock Bogomolov-{S}ommese type vanishing for globally {$F$}-regular threefolds.
\newblock {\em Math. Z.}, 299(3-4):1821--1835, 2021.

\bibitem[Kaw22a]{Kaw3}
Tatsuro Kawakami.
\newblock Bogomolov-{S}ommese vanishing and liftability for surface pairs in positive characteristic.
\newblock {\em Adv. Math.}, 409:Paper No. 108640, 2022.

\bibitem[Kaw22b]{Kaw4}
Tatsuro Kawakami.
\newblock Extendability of differential forms via {C}artier operators.
\newblock {\em arXiv preprint arXiv:2207.13967, to appear in J.~Eur.~Math.~Soc.~(JEMS)}, 2022.

\bibitem[Kaw24]{Kaw6}
Tatsuro Kawakami.
\newblock Steenbrink type vanishing for surfaces in positive characteristic.
\newblock {\em to appear}, 2024.

\bibitem[Kol13]{Kol13}
J\'{a}nos Koll\'{a}r.
\newblock {\em Singularities of the minimal model program}, volume 200 of {\em Cambridge Tracts in Mathematics}.
\newblock Cambridge University Press, Cambridge, 2013.
\newblock With a collaboration of S\'{a}ndor Kov\'{a}cs.

\bibitem[Lan19]{Langer19}
Adrian Langer.
\newblock Birational geometry of compactifications of {D}rinfeld half-spaces over a finite field.
\newblock {\em Adv. Math.}, 345:861--908, 2019.

\bibitem[Lan23]{Langer23}
Adrian Langer.
\newblock Simpson's correspondence on singular varieties in positive characteristic.
\newblock {\em arXiv preprint arXiv:2302.05668}, 2023.

\bibitem[Lan24]{Langer24}
Adrian Langer.
\newblock Bogomolov’s inequality and {H}iggs sheaves on normal varieties in positive characteristic.
\newblock {\em Journal für die reine und angewandte Mathematik (Crelles Journal)}, 2024.

\bibitem[Lip69]{Lipman}
Joseph Lipman.
\newblock Rational singularities, with applications to algebraic surfaces and unique factorization.
\newblock {\em Inst. Hautes \'{E}tudes Sci. Publ. Math.}, (36):195--279, 1969.

\bibitem[LMM21]{LMM2}
Christian Liedtke, Gebhard Martin, and Yuya Matsumoto.
\newblock Torsors over the rational double points in characteristic $p$.
\newblock {\em arXiv preprint arXiv:2110.03650}, 2021.

\bibitem[LN13]{Lee-Nakayama}
Yongnam Lee and Noboru Nakayama.
\newblock Simply connected surfaces of general type in positive characteristic via deformation theory.
\newblock {\em Proc. Lond. Math. Soc. (3)}, 106(2):225--286, 2013.

\bibitem[MS87]{MS87}
V.~B. Mehta and V.~Srinivas.
\newblock Varieties in positive characteristic with trivial tangent bundle.
\newblock {\em Comp. Math.}, 64(2):191--212, 1987.
\newblock With an appendix by Srinivas and M. V. Nori.

\bibitem[MS91]{Mehta-Srinivas(surface)}
V.~B. Mehta and V.~Srinivas.
\newblock Normal {$F$}-pure surface singularities.
\newblock {\em J. Algebra}, 143(1):130--143, 1991.

\bibitem[Tan14]{Tan12}
Hiromu Tanaka.
\newblock Minimal models and abundance for positive characteristic log surfaces.
\newblock {\em Nagoya Math. J.}, 216:1--70, 2014.

\bibitem[Tot23]{Totaro(logBott)}
Burt Totaro.
\newblock Endomorphisms of fano 3-folds and log bott vanishing.
\newblock {\em arXiv preprint arXiv:2305.18660}, 2023.

\bibitem[Zda17]{Zda17}
Maciej Zdanowicz.
\newblock On liftability of schemes and their {F}robenius morphism.
\newblock {\em a PhD thesis, University of Warsaw}, 2017.

\bibitem[Zda18]{Zda18}
Maciej Zdanowicz.
\newblock Liftability of singularities and their {F}robenius morphism modulo {$p^2$}.
\newblock {\em Int. Math. Res. Not. IMRN}, (14):4513--4577, 2018.

\end{thebibliography}

\end{document}